\documentclass[12pt,reqno]{amsart}
\usepackage{latexsym,amsmath,amssymb, amsthm, mathscinet}
\usepackage{cases, verbatim}
\usepackage[dvipdfmx]{graphicx}
\usepackage{bmpsize}

\setlength{\columnseprule}{0.4pt}
\setlength{\topmargin}{-1.5pt}
\setlength{\oddsidemargin}{0.cm}
\setlength{\evensidemargin}{0.cm}
\setlength{\textheight}{22cm}
\setlength{\textwidth}{16.0cm}

\usepackage{amsfonts,amsmath,amsthm, amssymb}
\usepackage{cases}
\usepackage[usenames]{color}
\usepackage{enumerate}
\usepackage{bm}


\theoremstyle{plain}
\newtheorem{thm}{Theorem}[section]

\newtheorem{defn}{Definition}

\newtheorem{lem}[thm]{Lemma}

\newtheorem{prop}[thm]{Proposition}
\theoremstyle{remark}
\newtheorem{rem}{\bf{Remark}}
\numberwithin{equation}{section}

%





\newcommand{\N}{\mathbb{N}}
\newcommand{\NN}{\rm {N}}
\newcommand{\C}{\rm {C}}

\newcommand{\R}{\mathbb{R}}

\newcommand{\T}{\mathbb{T}}
\newcommand{\Z}{\mathbb{Z}}

\newcommand{\AC}{{\rm AC\,}}

\newcommand{\BUC}{{\rm BUC\,}}
\newcommand{\USC}{{\rm USC\,}}
\newcommand{\LSC}{{\rm LSC\,}}
\newcommand{\Li}{L^{\infty}}
\newcommand{\Lip}{{\rm Lip\,}}
\newcommand{\SP}{{\rm SP}}






\newcommand{\ep}{\varepsilon}

\newcommand{\ol}{\overline}

\newcommand{\dist}{{\rm dist}\,}


\begin{document}
\title[Quantitative homogenization of HJ equations]{Quantitative homogenization of 
\\convex Hamilton--Jacobi equations 
\\with Neumann type boundary conditions}

\author{Hiroyoshi Mitake}
\address[H. Mitake]{
	Graduate School of Mathematical Sciences,
	University of Tokyo
	3-8-1 Komaba, Meguro-ku, Tokyo, 153-8914, Japan}
\email{mitake@g.ecc.u-tokyo.ac.jp}

\author{Panrui Ni}
\address[P. Ni]{
Department 1: Graduate School of Mathematical Sciences, University of Tokyo, 3-8-1 Komaba, Meguro-ku, Tokyo 153-8914, Japan; Department 2: Shanghai Center for Mathematical Sciences, Fudan University, Shanghai 200438, China}
\email{panruini@g.ecc.u-tokyo.ac.jp}


\makeatletter
\@namedef{subjclassname@2020}{\textup{2020} Mathematics Subject Classification}
\makeatother

\date{\today}
\keywords{Homogenization; Hamilton--Jacobi equations; Neumann boundary condition; Prescribed contact angle boundary condition}
\subjclass[2020]{
	49L25, 
	35B27, 
	35B40 
}

\begin{abstract}
We study the periodic homogenization for convex Hamilton--Jacobi equations on perforated domains under the Neumann type boundary conditions. We consider two types of conditions, the oblique derivative boundary condition and the prescribed contact angle boundary condition, which is important in the front propagation. We first establish a new representation formula for the solution by using the Skorokhod problem and modified Lagrangians. By using this formula essentially, we prove the sub and superadditivity properties of the extended metric functions, which will be applied to obtain the optimal convergence rate $O(\ep)$ for homogenization of Neumann type problems.
\end{abstract}

\date{\today}

%
%
%

%

\maketitle


\section{Introduction}

We are concerned with the periodic homogenization for convex Hamilton-Jacobi equations on perforated domains under the Neumann type boundary conditions of the form: 
\begin{numcases}
{}
u_t^\ep(x,t)+H\left(\frac{x}{\varepsilon},Du^\ep(x,t)\right)=0
& \textrm{in} \quad $\Omega_\varepsilon\times(0,\infty)$, \label{eq:CN1}
\\
B\left(\frac{x}{\varepsilon},Du^\ep(x,t)\right)=0 
& \textrm{on} \quad$\partial\Omega_\varepsilon\times(0,\infty)$, \label{eq:CN2}\\
u^\ep(x,0)=u_0(x) 
& \textrm{on} \quad$\overline{\Omega}_\ep$,  \label{eq:CN3}
\end{numcases}
where $n\in\N$ with $n\ge2$, $\Omega\subset\R^n$ is an open connected set with $C^1$ boundary which has a periodic structure, i.e., 
$\Omega+\Z^n=\Omega$, and $\Omega_\varepsilon:=\varepsilon\Omega$ for $\ep>0$. 
Here, 
$H:\R^n\times\R^n\to\R$, 
$B:\R^n\times\R^n\to\R$ and $u_0:\R^n\to\R$ are given 
continuous functions. 
Although \eqref{eq:CN1}-\eqref{eq:CN3} only require $H$, $B$ and $u_0$ to be defined on $\overline{\Omega}\times\R^n$, $\partial{\Omega}\times\R^n$ and $\ol\Omega_\ep$, respectively, 
it is more convenient for us to consider 
$H$, $B$ on $\R^n\times\R^n$ and $u_0$ on $\R^n$ for convenience.  
\textit{Throughout} the paper we \textit{always} assume 
\begin{itemize}
    \item[(A1)] $H\in C(\R^n\times \R^n)$, and $y\mapsto H(y,p)$ is $\Z^n$-periodic for all $p\in \R^n$, that is, 
\[
H(y+z,p)=H(y,p) \quad\text{for all} \  y\in\R^n, z\in\Z^n, \ \text{and}  \ p\in\R^n,
\] 
    \item[(A2)] $\lim_{\left|p\right| \to \infty}\inf_{y \in \R^n}H(y,p) = \infty$, 
    \item[(A3)] $p \mapsto H(y, p)$ is convex for all $y\in \R^n$, 
    \item[(A4)] $u_0 \in \mathrm{BUC}(\mathbb{R}^n) \cap \mathrm{Lip} (\mathbb{R}^n)$.  
\end{itemize}

In this paper we consider two types of boundary conditions: 
\begin{equation}\label{bdr:N}
B_{\NN}(y,p):=\gamma(y)\cdot p-g(y), 
\end{equation}
and 
\begin{equation}\label{bdr:C}
B_{\C}(y,p):=\nu(y)\cdot p-\cos\theta(y)|p|.  
\end{equation}
Here, $\gamma:\partial\Omega\to\R^n$ is a continuous vector field which satisfies
\[
\gamma(y)\cdot\nu(y)>0\quad 
\text{for all} \ y\in\partial\Omega, 
\]
where $\nu(y)$ denotes the outward unit normal vector at $y\in\partial\Omega$. 
The functions $g, \theta:\R^n\to\R$ are given, and we assume 
\begin{itemize}
    \item[(A5)]
    $g\in C(\partial\Omega)$,  
    $g(y+z)=g(y)$ for $y\in\partial\Omega$, $z\in\Z^n$. 
    \item[(A6)] 
    $\theta\in C(\partial\Omega)$,  
    $\theta(y+z)=\theta(y)$, 
    and  
    $\theta(y)\in[\frac{\pi}{2},\pi)$ for $y\in\partial\Omega$, $z\in\Z^n$. 
\end{itemize}    
If $B$ is given by \eqref{bdr:N}, then we call \eqref{eq:CN2} the \textit{oblique derivative boundary condition}, and 
if $B$ is given by \eqref{bdr:C}, then we call \eqref{eq:CN2} the \textit{prescribed contact angle boundary condition} or the \textit{capillary boundary condition}. Throughout the paper, we set $h(y):=\cos\theta(y)$. As explained above, we consider $h$ as a function defined on $\R^n$ instead of on $\partial\Omega$.

A typical example which we have in our mind is a level set equation of the front propagation problem in a domain of the form:
\begin{equation}\label{eq:front}
\left\{
\begin{aligned}
&V=-c\left(\frac{x}{\ep}\right) & \text{on} & \ \Gamma_t^\ep\cap\Omega_\ep \ \text{for} \ t>0,
\\ &\angle (\nu^\ep, \nu_{\partial\Omega_\ep})=\theta\left(\frac{x}{\ep}\right) & \text{on} & \ \partial\Gamma_t^\ep\cap\partial\Omega_\ep \ \text{for} \ t\ge0,
\end{aligned}
\right.
\end{equation}
where $\angle$ means the angle between two vectors, $\{\Gamma_t^\ep\}_{t\ge0}\subset\R^n$ is an evolving hypersurface to be determined, and $c:\R^n\to(0,\infty)$, $\theta:\R^n\to[\frac{\pi}{2},\pi)$ are given continuous periodic functions. Here, $\Gamma_t^\ep$ is defined as the zero level set of a function $u^\ep(x,t)$, that is,
\[\Gamma_t^\ep=\{ x \in \Omega_\ep:\ u^\ep(x,t) = 0\},\]
with the interior region given by $\{x \in \Omega_\ep:\ u^\ep(x,t)<0\}$ and the exterior region by $\{x \in \Omega_\ep:\ u^\ep(x,t)>0\}$. The outward unit normal vector $\nu^\ep$ on $\Gamma_t^\ep$ is then
\[
\nu^\ep = \frac{Du^\ep}{|Du^\ep|},
\]
and the normal velocity of the interface is
\[
V = \frac{u^\ep_t}{|Du^\ep|},
\]
which is in the direction of $-\nu^\ep$, see \cite[Section 1.2]{G}. The unit normal on $\partial\Omega_\ep$ is denoted by $\nu_{\partial\Omega_\ep}$, and $\theta(\frac{\cdot}{\ep})$ describes the contact angle between $\Gamma_t^\ep$ and $\partial\Omega_\ep$ for each $t\ge 0$.
 See Section \ref{sec:example} below for further discussions on the front propagation problem. In that example, $V=-1$, which means that the front propagates towards the exterior region. We give a new representation formula for the solution of the prescribed contact angle boundary condition in Theorem \ref{thm:value}.

The qualitative homogenization for the Neumann type boundary value problem on perforated domains in a stationary case has been well established in \cite{HI}. One can also refer to \cite{A,AI} for the qualitative homogenization for stationary Hamilton-Jacobi equations with the Dirichlet type boundary value problem on perforated domains. By a similar argument as in \cite{HI},  
we can see that the limit problem is given by 
\begin{equation}\label{eq:limit}
  \left\{
   \begin{aligned}
   &u_t+\overline{H}(Du)=0 & \text{in}  & \ \R^n\times(0,\infty), \\
   &u(x,0)=u_0(x) 
   & \text{on} & \ \R^n.  
  \end{aligned}
   \right.
\end{equation}
Here, $\overline{H}$ is the effective Hamiltonian which depends on the boundary condition. 
More precisely, for any $p\in\R^n$, $\overline{H}(p)$ is the unique constant such that there exists a viscosity solution $v=v(\cdot;p)$ of the associated cell problem: 
\begin{equation}\label{eq:cell}
  \left\{
   \begin{aligned}
   &H(y,p+D_yv)=\overline{H}(p) & \text{in}  & \ \Omega, \\
   &B(y,p+D_yv)=0 & \text{on}  & \ \partial\Omega.  
   \end{aligned}
   \right.
\end{equation}
In this paper, our main purpose is to develop quantitative homogenization theory for Neumann type boundary value problems \eqref{eq:CN1}--\eqref{eq:CN3} on perforated domains 
and to establish a rate of convergence, 
which has never been studied in the literature.

Quantitative homogenization for first order Hamilton--Jacobi equations 
in the periodic setting has been started to be studied in \cite{CDI}, and 
a rate $O(\ep^{1/3})$ of convergence is established for general non-convex Hamilton--Jacobi equations. 
The recent development of quantitative homogenization for convex  Hamilton--Jacobi equations has been started in \cite{MTY}, and the optimal rate $O(\ep)$ is established in \cite{TY}. 
There has been much interest and development in this direction, and we refer to \cite{HJ,HJMT, HTZ,MN,MS,MNT,NT,QSTY,T}, and the references therein. We also refer to \cite{MMTXY,XY} for important developments of homogenization related to G-equations. The paper \cite{MTY} appears to be the first work that studies quantitative homogenization for convex Hamilton--Jacobi equations using the optimal control viewpoint. It systematically initiated the study of convergence rates and suggested that $O(\ep)$ should be optimal. Subsequently, by applying Alexander's theorem from first-passage percolation within the optimal control framework, \cite{C} established the rate $O(\ep \log(C+\ep^{-1}t))$. Independently and almost simultaneously, \cite{TY} recovered an older result of Burago concerning the convergence rate of stable norms in metric geometry \cite{Burago}, which yields the optimal rate $O(\ep)$, thereby settling the problem. In particular, our work is inspired by \cite{HJMT}, which studies the quantitative homogenization for convex Hamilton-Jacobi equations under the state constraint boundary condition on perforated domains. 
In \cite{HJMT}, the authors redevelop the framework introduced in \cite{TY} to apply it to the state constraint problem on perforated domains. 
More precisely, the extended metric functions associated with problems are introduced, and a kind of subadditivity and superadditivity properties, which are key ingredients in the paper, 
are proved to establish quantitative results of homogenization. 
This method is robust. 
However, it critically depends on the structure of the representation formula of viscosity solutions to the problem which is given by the associated value functions in the optimal control. 
Therefore, if we change the boundary condition, then it requires great care. 
As explained precisely below, we need to take care of the reflection effect of the trajectories into account when we consider the representation formula for the viscosity solution to the Neumann type problem \eqref{eq:CN1}--\eqref{eq:CN3}, which is expressed by the Skorokhod problem \eqref{eq:Sk} below. 
This creates new difficulties, and requires careful arguments to establish quantitative results.  
We point out that there has been still no PDE arguement to obtain a rate of convergence which is better than $O(\ep^{1/3})$ even in a convex setting.


It is worth mentioning that, in the review article \cite{L}, the qualitative and quantitative homogenization theory is listed as a major development in the study of partial differential equations. The equations considered in \cite{L} are elliptic PDEs. It is pointed out that the Neumann problem is more difficult than the Dirichlet problem. In \cite{KLS}, the authors solve the Neumann problem for $\gamma=\nu$. For general case where $\gamma$ is nowhere tangential to the boundary, \cite{L} points out that the problem is not trivial even for the Laplacian operator and is an interesting and challenging problem. 
See \cite{J2, S} for more recent development on this direction 
for instance. In contrast, our work concerns first-order Hamilton--Jacobi equations. While the nature of the equations is different, the analogy highlights the technical complexity of homogenization with Neumann-type boundary conditions.

In our paper we define the value functions 
$V_{\NN}^\ep, V_{\C}^\ep:\overline{\Omega}_\ep\times[0,\infty)\to\R$ for \eqref{eq:CN1}--\eqref{eq:CN3} by 
\begin{align}
&V_{\NN}^\ep(x,t):=
\inf\left\{\int_0^t
L_{\NN}\left(\frac{\eta(s)}{\varepsilon},-v(s),-l(s)\right)\,ds
+u_0(\eta(t))\mid (\eta,v,l)\in \SP_\ep(x)\right\}, 
\label{func:VN}\\
&V_{\C}^\ep(x,t):=
\inf\left\{\int_0^t
L_{\C}\left(\frac{\eta(s)}{\varepsilon},-v(s),-l(s)\right)\,ds
+u_0(\eta(t))\mid (\eta,v,l)\in \SP_\ep(x)\right\}, 
\label{func:VC}
\end{align}
where we denote by $\SP_\ep(x)$ for $x\in\overline{\Omega}_\ep$ 
the family of solutions to the \textit{Skorokhod problem}: triples $(\eta,v,l)$ such that, 
for given $x\in\overline{\Omega}_\ep$ and $v\in L^1_{\rm{loc}}([0,\infty),\R^n)$, a pair $(\eta,l)\in \AC_{\rm loc}([0,\infty),\R^n)\times L^1_{\rm loc}([0,\infty),\R)$ solves
\begin{equation}\label{eq:Sk}
\begin{cases}
\eta(0)=x, \\
\eta(s)\in\overline{\Omega}_\ep &\text{for all} \ s\in [0,\infty),\\ 
\dot{\eta}(s)+l(s)\gamma(\frac{\eta(s)}{\ep})=v(s),\quad & \text{for} \ a.e.\ s\in (0,\infty), \\ 
l(s)\ge0 & \text{for} \ a.e.\ s\in (0,\infty), \\
l(s)=0\quad \textrm{if}\quad \eta(s)\in \Omega_\ep,\quad &\text{for} \ a.e.\ s\in (0,\infty). 
\end{cases}
\end{equation}
In the following, we denote by $\SP(x)$ the set $\SP_1(x)$. Note that when we consider the prescribed contact angle boundary value problem, we always consider $\gamma=\nu$ in \eqref{eq:Sk}. 
The solvability of the Skorokhod problem has been well established in \cite[Theorem 4.1]{I11} (see also \cite{I13}), which is classically studied in \cite{LS}. 
Note that the uniqueness of $(\eta,l)$ satisfying \eqref{eq:Sk} is not known as far as the authors know. 
Here, the functions $L_{\NN}, L_{\C}:\R^n\times\R^n\times\R\to\R$ are modified Lagrangians which are defined according to the boundary conditions by 
\begin{align}
&L_{\NN}(y,q,l):=
\sup_{p\in\R^n}\big\{p\cdot q-H(y,p)\big\}-g(y)l, \label{def:L1}\\
&L_{\C}(y,q,l):=
\sup_{p\in\R^n}\big\{p\cdot q-H(y,p)-h(y)l|p|\big\}. \label{def:L2}
\end{align}
Here we provide some clarification on $L_{\C}$. Since $L_{\C}$ is defined as the supremum of a family of affine functions of $q$, it is convex in $q$. Moreover, by Proposition \ref{prop:lip}, one can modify $H(x,p)$ outside a sufficiently large ball to make it superlinear in $p$, which ensures that $L_{\C}$ is finite. When proving that the value function $V^\ep_{\C}$ is a viscosity solution to \eqref{eq:CN1}--\eqref{eq:CN3}, it is necessary to guarantee that the function 
\[
p \mapsto H(y,p) + h(y)\,l\,|p|
\] 
is convex in $p$. To ensure this, we assume $h \le 0$ and restrict to the region where $l \le 0$ in the definition of $L_{\C}$. 
This restriction is consistent with \eqref{eq:Sk}, since the integrand in \eqref{func:VC} is $L_{\C}\big(\frac{\eta(s)}{\ep}, -v(s), -l(s)\big)$ with $l(s) \ge 0$.

We first establish the following result. 
\begin{thm}\label{thm:value}
Assume that {\rm(A1)--(A4)} hold. 
\begin{enumerate}
\item[{\rm(i)}]
Assume that {\rm(A5)} holds. Let $V_{\NN}$ be the value function defined by \eqref{func:VN}. 
Then,  $V_{\NN}$ is the unique viscosity solution to \eqref{eq:CN1}--\eqref{eq:CN3} with $B=B_{\NN}$. 
\item[{\rm(ii)}]
Assume that {\rm(A6)} holds. Let $V_{\C}$ be the value function defined by \eqref{func:VC}. 
Then,  $V_{\C}$ is the unique viscosity solution to \eqref{eq:CN1}--\eqref{eq:CN3} with $B=B_{\C}$. 
\end{enumerate}
\end{thm}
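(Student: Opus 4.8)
The plan is to prove Theorem~\ref{thm:value} by the standard scheme for representation formulas of Hamilton--Jacobi equations with Neumann-type boundary conditions: first show that the value functions $V_N^\ep$ and $V_C^\ep$ are well-defined and have the right regularity, then verify a dynamic programming principle, use it to show that they are viscosity solutions of \eqref{eq:CN1}--\eqref{eq:CN3}, and finally invoke a comparison principle to get uniqueness and the identification. Since $\ep>0$ is fixed throughout, it suffices to treat $\ep=1$ and work on $\Omega$ with $\SP(x)$; the general case follows by the scaling $x\mapsto x/\ep$. I would first record basic properties of the modified Lagrangians: by (A2)--(A3), $q\mapsto L_N(y,q,l)$ and $q\mapsto L_C(y,q,l)$ are convex and superlinear uniformly in $y$ (note in $L_C$ the extra term $-h(y)l|p|$ only shifts the effective gradient penalty, and since $h(y)=\cos\theta(y)\le 0$ by (A6), this term is $\ge 0$ for $l\ge 0$, so coercivity is preserved), and the relevant Lagrangian is bounded below. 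Together with the solvability of the Skorokhod problem from \cite[Theorem~4.1]{I11} and the sublinearity/Lipschitz bound on $u_0$ from (A4), this gives that the infima in \eqref{func:VN}--\eqref{func:VC} are finite and that $V_N, V_C$ are bounded and Lipschitz in $x$ and $t$ (one uses competitor trajectories: the constant path reflected appropriately, and time-translates).

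The core analytic step is the dynamic programming principle: for $0\le \tau\le t$,
\[
V(x,t)=\inf\Big\{\int_0^\tau L\Big(\eta(s),-v(s),-l(s)\Big)\,ds+V(\eta(\tau),t-\tau)\ \Big|\ (\eta,v,l)\in\SP(x)\Big\},
\]
where $L\in\{L_N,L_C\}$ and $V\in\{V_N,V_C\}$ accordingly. The ``$\ge$'' inequality is a concatenation argument — given a near-optimal trajectory on $[0,t]$, restrict it to $[0,\tau]$ and use that its tail is a competitor for $V(\eta(\tau),t-\tau)$. The ``$\le$'' inequality requires concatenating a near-optimal trajectory for $V(\eta(\tau),t-\tau)$ to the given one; here one must check that the glued path still solves a Skorokhod problem, which is where the non-uniqueness noted after \eqref{eq:Sk} is actually convenient: we only need existence of \emph{some} solution through the prescribed endpoint, and reflection terms add without obstruction since $l\ge 0$ is preserved under concatenation.

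From the DPP, the interior supersolution/subsolution properties of \eqref{eq:CN1} follow by the classical argument: freeze $\tau=t-\delta$, test against a smooth $\phi$ touching $V$ from below (resp.\ above) at an interior point, choose $v$ constant and $l\equiv 0$ (legitimate near an interior point, where $\eta$ stays in $\Omega$ for small times), and pass to the limit $\delta\to 0$ using superlinearity of $L$ and convexity to recover $\phi_t+H(x,D\phi)\le 0$ (resp.\ $\ge 0$); the convex duality $H(y,p)=\sup_q\{p\cdot q-L_\ast(y,q)\}$, where $L_\ast$ is the gradient part of $L$, is what makes the supersolution direction work. The boundary condition \eqref{eq:CN2} in the viscosity sense is the delicate point: at a boundary point $x_0\in\partial\Omega$ where $\phi$ touches $V$ from above, one must show $\min\{\phi_t+H(x_0,D\phi),\,B(x_0,D\phi)\}\le 0$; this is precisely where the reflection vector $\gamma$ in the Skorokhod dynamics enters — choosing $v$ so that the reflected path moves into the domain forces, in the limit, either the PDE or the inequality $\gamma(x_0)\cdot D\phi-g(x_0)\le 0$ (resp.\ the capillary inequality $\nu(x_0)\cdot D\phi-\cos\theta(x_0)|D\phi|\le 0$ in case (ii), using that $L_C$ penalizes $|p|$ with weight $h(y)l$). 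I expect the boundary analysis to be the main obstacle: one must relate the measure $l(s)\,ds$ of reflection time to the normal excursion of $\eta$, control the oscillation of $\gamma$ (resp.\ $\nu$) and of $g$ (resp.\ $\theta$) along the near-boundary trajectory, and handle the fact that $\eta$ may touch and leave $\partial\Omega$ on a complicated set; the $C^1$ regularity of $\partial\Omega$ and the transversality $\gamma\cdot\nu>0$ are the key structural inputs that make these estimates go through. Finally, uniqueness of the viscosity solution to \eqref{eq:CN1}--\eqref{eq:CN3} with either boundary condition is known (for $B_N$ this is the classical oblique-derivative comparison principle; for $B_C$ it follows from the comparison theory for geometric/capillary boundary conditions), and combined with the fact that $V_N$, $V_C$ are viscosity solutions this yields both assertions.
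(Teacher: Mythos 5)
Your overall scheme (finiteness and regularity of the value function, dynamic programming, viscosity verification, then the comparison principle for uniqueness) is the same as the paper's, and for part (i) citing the known oblique-derivative theory is exactly what the paper does. The problem is that for part (ii) you stop precisely where the real proof begins. You describe the boundary verification as ``the main obstacle'' and list what ``one must'' do, but you never do it; in the paper this step is carried by Lemma \ref{lem:value-4}, which, given an arbitrary test function $\phi$, \emph{constructs} a triple $(\eta,v,l)\in\SP(x)$ along which the approximate Fenchel identity
\[
H(\eta(s),D\phi)+L_C(\eta(s),-v(s),-l(s))\le \delta-v(s)\cdot D\phi+h(\eta(s))\,l(s)\,|D\phi|
\]
holds a.e. The difficulty is a genuine fixed-point issue: the near-optimal control $v$ you want to pick depends on the reflection density $l$, but $l$ is itself produced by the Skorokhod problem from $v$. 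The paper resolves this by an iteration ($v_{k+1}$ chosen near-optimal for the Legendre transform with the previous $l_k$, then re-solving the Skorokhod problem), closed by the quantitative bound $|v|\le C+\tau l$ with $\tau<1$ (Lemma \ref{lem:value-3}, which uses $\|h\|_{L^\infty}<1$, i.e. $\theta<\pi$), the inequality $l\le|v|$ (valid only because $\gamma=\nu$ in the capillary case), and a passage to the limit via the Skorokhod stability result together with Mazur's lemma and the joint convexity of $(v,l)\mapsto L_C(x,-v,-l)$. Without this lemma (or a substitute), the contradiction argument for the subsolution property of $V^*$ at boundary points does not close, and no amount of ``relating $l(s)\,ds$ to the normal excursion'' as you gesture at will by itself produce the required trajectory. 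Since this is exactly the ingredient the paper flags as the new difficulty coming from the term $h(y)l|p|$, its absence is a substantive gap, not a routine omission.

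Two smaller points. First, your sign remark is off: in the value function the Lagrangian is evaluated at $-l(s)$, so the capillary term appearing in the cost is $h(y)l(s)|p|\le 0$ (reflection is \emph{cheaper}), not $\ge 0$; the correct structural facts to record are $\|h\|_{L^\infty}<1$ and $l\le|v|$, which give the coercivity bound \eqref{v+l2}. Second, ``bounded and Lipschitz via competitor trajectories'' only yields upper bounds on $V$; for the lower bound $V(x,t)\ge u_0(x)-Ct$ (needed to attain the initial data) the paper builds an explicit classical subsolution compatible with the contact-angle condition (Lemmas \ref{lem:value-1}--\ref{lem:value-2}), and some such argument (or a superlinearity estimate on $L_C$ together with $l\le|v|$) must be supplied.
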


Theorem \ref{thm:value} (i) is known, and we refer to \cite[Theorem 5.1]{I11} or \cite[Theorem 5.5]{I13} for the proof. 
Theorem \ref{thm:value} (ii), however, is genuinely new: to the best of our knowledge, no representation formula of this kind has appeared before. When $h(y)\leq 0$, that is, the contact angle $\theta(y)\in [\frac{\pi}{2},\pi)$, $p\mapsto B_{\C}(y,p)$ is convex, the value function corresponding to \eqref{eq:CN1}--\eqref{eq:CN3} is provided in \cite{BIM}. While their formula covers our assumptions, it is based on a different version of the Skorokhod problem and leads to a structurally distinct formulation. Their work provides important insights into problems with general nonlinear boundary conditions and develops a robust viscosity solution framework. In our setting, however, we introduce a new representation formula \eqref{func:VC}, which is more explicit and tailored to the specific analysis we pursue, particularly the construction of the metric function in Section \ref{sec:mt} (see the discussion preceding Lemma \ref{lem:lscc}). This formulation is made possible by the modified Lagrangian \eqref{def:L2}, which incorporates the boundary term $|Du^\ep|$ directly into the Legendre transform. It is a natural and, to our knowledge, previously unexplored idea. This idea plays a central role in our analysis and provides a foundation not only for homogenization, but also for further study of problems with similar Neumann-type conditions. In particular, in Section \ref{sec:example}, we use this representation formula to characterize the evolution of the front as it passes through a hole under a prescribed contact angle, and compute the corresponding optimal trajectories. This analysis reveals that the front develops a singular feature in its shape upon passing through the perforation, see Figure \ref{Fig:Example02}. Lemma \ref{lem:value-4} is a key ingredient to prove Theorem \ref{thm:value} (ii), which has a difficulty coming from the term $h(x)l |p|$ in the modified Lagrangian \eqref{def:L2} comparing to the proof for the oblique derivative boundary value problem.

By using Theorem \ref{thm:value} we establish our main result. 
\begin{thm}\label{thm:main}
Assume that {\rm(A1)--(A4)} hold. 
\begin{enumerate}
\item[{\rm(i)}]
Assume that {\rm(A5)} holds. 
Let $u^\ep$ be the unique viscosity solution to \eqref{eq:CN1}--\eqref{eq:CN3} with 
$B=B_{\NN}$, and $u$ be the limit function of $u^\ep$ as $\ep\to0$, solving \eqref{eq:limit}.
There exists $C>0$ such that 
\[
\|u^\varepsilon-u\|_{\Li(\overline{\Omega}_\ep\times[0,\infty))}\le
C\varepsilon.
\]
\item[{\rm(ii)}]
Assume that {\rm(A6)} holds. 
Let $u^\ep$ be the unique viscosity solution to \eqref{eq:CN1}--\eqref{eq:CN3} with 
$B=B_{\C}$, and $u$ be the limit function of $u^\ep$ as $\ep\to0$, solving \eqref{eq:limit}.
There exists $C>0$ such that 
\[
\|u^\varepsilon-u\|_{\Li(\overline{\Omega}_\ep\times[0,\infty))}\le
C\varepsilon.
\]
\end{enumerate}
\end{thm}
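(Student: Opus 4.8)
The plan is to follow the metric-function strategy of \cite{TY, HJMT}, adapted to the Skorokhod setting, and to treat both boundary conditions in parallel using the representation formulas \eqref{func:VN} and \eqref{func:VC} from Theorem \ref{thm:value}. By standard comparison-principle reductions it suffices to bound $\|u^\ep - u\|_{\Li}$ on a fixed time interval, and by the scaling $x \mapsto x/\ep$, $t \mapsto t/\ep$ one reduces the estimate to comparing the $\ep = 1$ value function $V$ (on $\Omega$) with the solution $u$ of the homogenized problem \eqref{eq:limit} at the macroscopic scale. The core object is the \emph{metric function} (to be constructed in Section \ref{sec:mt}): for $x \in \ol\Omega$, $y \in \ol\Omega$ and $\mu \ge 0$ one sets
\[
m_\mu(x,y) := \inf\left\{ \int_0^\tau \bigl(L(\eta(s), -\dot\eta(s) - l(s)\gamma(\eta(s)), -l(s)) + \mu\bigr)\,ds \;\Big|\; \tau \ge 0,\ (\eta,l)\ \text{solves \eqref{eq:Sk} with } \eta(0)=x,\ \eta(\tau)=y \right\},
\]
with $L = L_N$ or $L = L_C$ accordingly, and one shows that $m_\mu$ is related to $\overline H$ through $\overline H(p) = \sup\{ \,-\mu : p \cdot (x - y) \le m_\mu(x,y) + o(|x-y|) \text{ for all } x,y\,\}$ — that is, $m_\mu$ has the homogenized Hamiltonian $\overline H$ baked into its large-scale slope.

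The main steps, in order: (1) Establish basic regularity of $m_\mu$ — finiteness, continuity, the dynamic-programming (triangle) inequality $m_\mu(x,z) \le m_\mu(x,y) + m_\mu(y,z)$, and Lipschitz bounds in $x,y$ uniform for $\mu$ in bounded sets — using coercivity (A2) of $H$ (hence superlinearity of the $p$-part of $L$) and the solvability of the Skorokhod problem \cite[Theorem 4.1]{I11}; for $L_C$ one must additionally control the $h(y)l|p|$ term, which is where Lemma \ref{lem:value-4} and the sign condition $h \le 0$ from (A6) enter, ensuring the modified Lagrangian is still bounded below and superlinear. (2) Prove the \emph{subadditivity/superadditivity} of $m_\mu$ along the $\Z^n$-periodic structure: for $z \in \Z^n$, $m_\mu(x+z, y+z) = m_\mu(x,y)$ by periodicity of $H$, $\gamma$, $g$, $\theta$, and then derive a genuine approximate subadditivity $|m_\mu(0, Nz) - N\,\overline m_\mu(z)| \le C$ for a homogenized quantity $\overline m_\mu$, via a concatenation-of-trajectories argument that must respect the reflection constraint $\eta(s) \in \ol\Omega$ and the complementarity $l(s) = 0$ on $\Omega$ — this is the step where trajectories crossing cell boundaries may hit $\partial\Omega$ and pick up boundary cost, and it requires a gluing lemma showing the extra cost of connecting two admissible reflected paths is $O(1)$ independent of $N$. (3) Convert the discrete estimate into the continuous one: show $|m_\mu(0, x) - \rho_\mu(x)| \le C$ uniformly in $x$, where $\rho_\mu$ is the support function of the sublevel set $\{\overline H \le -\mu\}$ (equivalently $\rho_\mu(x) = \sup_{\overline H(p) \le -\mu} p \cdot x$), using convexity of $\overline H$ (inherited from (A3) via the cell problem \eqref{eq:cell}) and a covering/interpolation argument. (4) Feed the sharp metric estimate into the representation formula: write both $V(x,t)$ and the Hopf–Lax–type formula for $u(x,t)$ as infima over endpoints of $m_\mu + u_0(\cdot)$-type quantities (optimizing over $\mu$), and the $O(1)$ error at the $\ep=1$ scale becomes $O(\ep)$ after rescaling, giving $\|u^\ep - u\|_{\Li} \le C\ep$. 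The two halves (i) and (ii) run on the same skeleton, differing only in which Lagrangian and which metric function is used.

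The hard part will be Step (2) — the approximate sub/superadditivity of the Skorokhod metric function. Unlike in \cite{TY, HJMT}, where trajectories are either unconstrained or confined to a perforated domain with a state-constraint (one-sided) penalty, here the reflection term $l(s)\gamma(\eta(s)/\ep)$ is an \emph{active} part of the dynamics and is not known to be uniquely determined (the paper explicitly notes non-uniqueness of solutions to \eqref{eq:Sk}). Concatenating two near-optimal reflected trajectories and re-solving the Skorokhod problem on the junction without accumulating cost that grows with $N$ requires a careful a priori bound on the total variation of the local time $l$ along near-optimal paths — one expects $\int_0^\tau l(s)\,ds \le C(1 + \tau)$ from the Lagrangian lower bound and the geometry of $\partial\Omega$ (its $C^1$ regularity and the strict obliqueness $\gamma \cdot \nu > 0$), but making this quantitative and uniform is the technical crux. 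For the capillary case (ii) there is the further subtlety that the boundary cost is coupled to $|p|$ through $h(y)l|p|$ rather than being an additive $-g(y)l$ term, so the estimate on $\int l\,ds$ must be obtained simultaneously with the estimate on $\int |p|\,ds$; Lemma \ref{lem:value-4} is designed exactly to handle this coupling, and I would lean on it here.
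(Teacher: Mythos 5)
Your overall skeleton (representation formula via the Skorokhod problem, a metric function with periodic sub/superadditivity, then rescaling to turn an $O(1)$ error at scale $\ep=1$ into an $O(\ep)$ rate) is the right family of ideas, and your identification of the technical crux --- controlling the local time $l$ when surgering reflected trajectories --- matches a real difficulty that the paper handles (Proposition \ref{prop:extremal-main}, Lemma \ref{lem:vl}, and the estimates inside Lemmas \ref{lem:subad} and \ref{prop:super-ad}). However, there is a genuine gap in your Step (2): a concatenation/gluing argument, even with a perfect $O(1)$ junction-cost lemma, only yields the \emph{subadditive} inequality $m_\mu(0,Nz)\le N\,\overline m_\mu(z)+C$. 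The two-sided bound $|m_\mu(0,Nz)-N\,\overline m_\mu(z)|\le C$ that you assert also requires the \emph{superadditive} direction, i.e.\ a lower bound on the cost of one long minimizer in terms of $N$ copies of the single-cell quantity with only an additive constant of error, and you propose no mechanism for it. This is exactly where the paper (following \cite{TY,HJMT}) invokes Burago's cutting lemma \cite{Burago}: a minimizer for the doubled problem is cut into at most $\tfrac{n+2}{2}$ disjoint time intervals whose space-time displacements sum to exactly half the total, the pieces are shifted by lattice vectors using periodicity, and the discontinuities are reconnected by short admissible reflected paths with $l=0$ (Lemma \ref{prop:super-ad}); the resulting doubling inequalities are then iterated geometrically (Lemma \ref{lem:mcon}) to produce the $O(\ep)$ rate. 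Without this cutting-and-shifting step your Step (3) estimate $|m_\mu(0,x)-\rho_\mu(x)|\le C$ uniformly in $x$ --- which is essentially equivalent to the full strength of the theorem --- cannot follow from ``convexity of $\overline H$ plus a covering/interpolation argument''; what you would actually obtain is only a one-sided comparison, which by itself gives rates no better than the known $O(\ep^{1/3})$-type PDE bounds.

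Two further structural points where your route diverges from (and falls short of) the paper's. First, the paper works with the fixed-time action $m(t,x,y)$ rather than your free-time, level-parametrized Ma\~n\'e-type potential $m_\mu$; the fixed-time formulation is what makes Burago's lemma applicable to the space-time curve $s\mapsto(\eta(s),s)$, and it also lets the paper avoid identifying the homogenized metric with a support function at all: it simply shows $\overline u=u$ using the known qualitative homogenization and the Hopf--Lax structure. Second, on a perforated domain the endpoints $x/\ep$, $y/\ep$ need not lie in $\overline\Omega$, so the paper must introduce the extended metric $m^*$ (infimum over nearby boundary points in a unit cell, \eqref{func:mn-star}--\eqref{func:mc-star}) and prove $|m-m^*|\le C$ (Proposition \ref{prop:difference-mm}) before any sub/superadditivity can even be stated across scales; your proposal does not account for this, and the doubling inequalities are formulated for $m^*$, not $m$. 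These are fixable by importing the corresponding machinery, but as written the superadditivity mechanism is missing and the argument does not close.
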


There was no known quantitative convergence rate for \eqref{eq:CN1}--\eqref{eq:CN3} in the literature before our work. To prove Theorem \ref{thm:main}, we essentially use the subadditivity and superadditivity properties of the metric function defined by \eqref{func:mn-star} and \eqref{func:mc-star}, respectively, see Lemmas \ref{lem:subad} and \ref{prop:super-ad} below. 
The arguments to prove these results are inspired by those in \cite{HJMT}, but with a crucial difference: we work with the Skorokhod problem \eqref{eq:Sk}. This introduces a new control variable $l$ in addition to $\dot\eta$. Estimating this reflection control turns out to be technically demanding and requires a new argument. We give a proof on the estimate of $l$ in Proposition \ref{prop:extremal-main}. Furthermore, obtaining Lemmas \ref{lem:subad} and \ref{prop:super-ad} demands a delicate analysis that carefully accounts for the interaction between $\dot\eta$ and $l$. The details of these estimates are presented in Section \ref{subsec:sa}.

In this paper, we focus on Hamiltonians that depend only on the fast variable $\frac{x}{\ep}$. However, we believe that one can obtain a convergence rate for multi-scale setting where the Hamiltonian depends also on the slow variable $x$, by adapting the techniques developed in this work together with \cite{HJ}. We plan to investigate this direction in future work.

\bigskip
\noindent

\noindent \textbf{Notation.}
For $X\subset\R^n$, $\Lip(X)$ 
(resp., $\BUC(X)$, $\USC(X)$, and $\LSC(X)$) denotes the
space of Lipschitz continuous (resp., bounded uniformly continuous, upper semicontinuous and lower semicontinuous) functions on $X$ with
values in $\R$. 
For $A\subset \R^n$, $B\subset \R^m$ with $n,m\in\mathbb N$, we denote by $\mathrm{AC}(A,B)$ the family of absolutely continuous functions on $A$ with values in $B$.  
For a possibly discontinuous function $\eta:\R\to X$, $t\in \R$, $\eta(t^+)$ and $\eta(t^-)$ stand for the right-hand and left-hand limits of $\eta$ at $t$, respectively. 
For $U, V \subset \R^n$,  we set $\dist(U,V)=\inf_{x\in U, y\in V} |x-y|$. 
The set $B_r(x)\subset \R^n$ stands for the open ball centered at $x$ with the radius $r$. 
We write $Y=[-\frac{1}{2},\frac12]^n$ 
as the unit cube in $\R^n$. 
If a function $h:\R^n\to \R$ is $\Z^n$-periodic, we can think of $h$ as a function from $\T^n$ to $\R$ and vice versa.

\bigskip
\noindent
\textbf{Organization. } In Section \ref{sec:pre}, we first recall the definition of viscosity solutions of \eqref{eq:CN1}-\eqref{eq:CN3} and the comparison principles, and give some basic properties of solutions of the Skorokhod problem \eqref{eq:Sk}. Then, the proof of Theorem \ref{thm:value} (ii) is given in Section \ref{sec:pre}. In Section \ref{sec:bs}, we prove the existence and a Lipschitz estimate of minimizing curves of the value functions defined in \eqref{func:VN} and \eqref{func:VC}, respectively. In Section \ref{sec:mt}, we define a class of metric functions, which plays a central role in the proof of Theorem \ref{thm:main}. We establish fundamental properties of metric functions, including the subadditivity and the superadditivity. Finally, Section \ref{sec:pthm} is devoted to give a proof of Theorem \ref{thm:main}.

\section{Value function}\label{sec:pre}
In this section we give a proof of Theorem \ref{thm:value}. 
First, 
we recall the notion of viscosity solutions to \eqref{eq:CN1}--\eqref{eq:CN3}, and give some basic results. 

\begin{defn} 
{\rm(i)} 
Let $u^\ep\in\USC(\overline{\Omega}_\ep\times [0,\infty))$. 
The function $u^\ep$ is said to be a viscosity subsolution of \eqref{eq:CN1}--\eqref{eq:CN3}  
if $u^\ep(\cdot,0)\leq u_0$ on $\overline{\Omega}_\ep$, and, 
for any $\varphi\in C^1(\overline{\Omega}_\ep\times[0,\infty))$, 
if $(\hat{x},\hat{t})\in \overline{\Omega}_\ep\times(0,\infty)$ is a maximizer of $u^\ep-\varphi$, and if $\hat{x}\in\Omega_\ep$, 
then 
$$
\varphi_t(\hat{x},\hat{t})+H\left(\frac{\hat{x}}{\ep},D\varphi(\hat{x},\hat{t})\right)\leq 0;
$$
if $\hat{x}\in\partial\Omega_\ep$, then 
$$
\min\left\{
\varphi_t(\hat{x},\hat{t})+H\left(\frac{\hat{x}}{\ep},D\varphi(\hat{x},\hat{t})\right), 
B\left(\frac{\hat{x}}{\ep},D\varphi(\hat{x},\hat{t})\right)
\right\}\leq 0.
$$
{\rm(ii)}
Let $u^\ep\in\LSC(\overline{\Omega}_\ep\times [0,\infty))$. 
The function $u^\ep$ is said to be a viscosity supersolution of \eqref{eq:CN1}--\eqref{eq:CN3} 
if $u^\ep(\cdot,0)\geq u_0$ on $\overline{\Omega}_\ep$, and, 
for any $\varphi\in C^1(\overline{\Omega}_\ep\times[0,\infty))$, 
if $(\hat{x},\hat{t})\in \overline{\Omega}_\ep\times(0,\infty)$ is a minimizer of $u^\ep-\varphi$, and if $\hat{x}\in\Omega_\ep$, 
then 
$$
\varphi_t(\hat{x},\hat{t})+H\left(\frac{\hat{x}}{\ep},D\varphi(\hat{x},\hat{t})\right)\geq 0;
$$
if $\hat{x}\in\partial\Omega_\ep$, then 
$$
\max\left\{
\varphi_t(\hat{x},\hat{t})+H\left(\frac{\hat{x}}{\ep},D\varphi(\hat{x},\hat{t})\right), 
B\left(\frac{\hat{x}}{\ep},D\varphi(\hat{x},\hat{t})\right)
\right\}\geq 0.
$$
{\rm(iii)} 
A continuous function $u^\ep$ is said to be a viscosity solution of \eqref{eq:CN1}--\eqref{eq:CN3} if $u^\ep$ is a viscosity subsolution and supersolution of \eqref{eq:CN1}--\eqref{eq:CN3}. 
\end{defn}

Henceforth, since we are always concerned with viscosity solutions, the adjective ``viscosity" is omitted.

\begin{thm}[Comparison principle for {\rm\eqref{eq:CN1}--\eqref{eq:CN2}}]\label{thm:comp}
Assume that {\rm(A1)--(A4)} hold. 
\begin{enumerate}
\item[{\rm(i)}]
Assume that {\rm(A5)} holds. 
Let $u\in \USC(\overline{\Omega}_\varepsilon\times[0,\infty))$ and $v\in \LSC(\overline{\Omega}_\varepsilon\times[0,\infty))$ be, respectively, 
a bounded subsolution and a bounded supersolution of \eqref{eq:CN1}--\eqref{eq:CN2} with $B=B_{\NN}$. 
If $u(\cdot,0)\le v(\cdot,0)$ on $\overline{\Omega}_\varepsilon$, 
then $u\le v$ on $\overline{\Omega}_\varepsilon\times[0,\infty)$. 
\item[{\rm(ii)}]
Assume that {\rm(A6)} holds. 
Let $u\in \USC(\overline{\Omega}_\varepsilon\times[0,\infty))$ and $v\in \LSC(\overline{\Omega}_\varepsilon\times[0,\infty))$ be, respectively, 
a bounded subsolution and a bounded supersolution of \eqref{eq:CN1}--\eqref{eq:CN2} with $B=B_{\C}$. 
If $u(\cdot,0)\le v(\cdot,0)$ on $\overline{\Omega}_\varepsilon$, 
then $u\le v$ on $\overline{\Omega}_\varepsilon\times[0,\infty)$. 
\end{enumerate}
\end{thm}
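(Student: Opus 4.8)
The plan is to establish both comparison principles by the doubling-of-variables method for Hamilton--Jacobi equations with Neumann-type boundary conditions. Statement (i) is classical: it is contained in the well-posedness results of \cite{I11, I13} (e.g.\ \cite[Theorem 5.1]{I11}). The substance is therefore to verify that the capillary operator $B_C$ under {\rm(A6)} fits into the same framework and that the argument carries over. Since $\ep>0$ is fixed and enters only through the fast variable, after the rescaling $x\mapsto x/\ep$ we may assume $\ep=1$; by {\rm(A1)} and $\Z^n$-periodicity, $H$ is then uniformly continuous on $\R^n\times\overline{B}_R$ for every $R>0$, and coercive by {\rm(A2)}. Note that only {\rm(A1)}, {\rm(A2)} and {\rm(A5)}/{\rm(A6)} are needed here.

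The structural input that treats the two boundary operators on the same footing is that $B_N$ and $B_C$ are both \emph{Neumann-type}: each $B\in C(\partial\Omega\times\R^n)$ is Lipschitz in $p$ and \emph{uniformly strictly monotone in the outward normal direction}, i.e.\ there are $0<c_0\le C_0$ with
\begin{equation*}
c_0(\mu-\lambda)\ \le\ B\big(y,\,p+\mu\nu(y)\big)-B\big(y,\,p+\lambda\nu(y)\big)\ \le\ C_0(\mu-\lambda)\qquad\text{for all }\ y\in\partial\Omega,\ p\in\R^n,\ \mu\ge\lambda.
\end{equation*}
For $B_N$ this holds with $c_0=\min_{\partial\Omega}\gamma\cdot\nu$ and $C_0=\max_{\partial\Omega}|\gamma|$, with $c_0>0$ by continuity, $\Z^n$-periodicity and $\gamma\cdot\nu>0$. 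For $B_C$ one computes $B_C(y,p+\mu\nu)-B_C(y,p+\lambda\nu)=(\mu-\lambda)-\cos\theta(y)\big(|p+\mu\nu|-|p+\lambda\nu|\big)$, and since $\big|\,|p+\mu\nu|-|p+\lambda\nu|\,\big|\le\mu-\lambda$ and, by {\rm(A6)}, $k_0:=\max_{\partial\Omega}|\cos\theta|<1$, the inequality holds with $c_0=1-k_0>0$ and $C_0=1+k_0$; in particular only the triangle inequality is used, so the nonsmoothness of $B_C$ at $p=0$ is harmless. Moreover, by {\rm(A6)}, $p\mapsto B_C(y,p)=\nu(y)\cdot p+|\cos\theta(y)|\,|p|$ is convex and equals $\sup\{\gamma'\cdot p:\ |\gamma'-\nu(y)|\le|\cos\theta(y)|\}$, a supremum of linear oblique operators each transversal to $\partial\Omega$ (since $\gamma'\cdot\nu(y)\ge c_0$). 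Thus $B_C$ belongs to the class of nonlinear Neumann-type boundary operators for which the comparison principle is established in \cite{BIM} (covering the case $h\le0$), and (ii) follows; I indicate the scheme next.

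I would argue by contradiction. Suppose $\sup_{\overline{\Omega}_\ep\times[0,\infty)}(u-v)>0$; fix $T$ large and, for small parameters $\alpha,\beta,\zeta,\sigma,\delta,\lambda>0$, maximize
\begin{equation*}
\Phi(x,y,t,s):=u(x,t)-v(y,s)-\frac{|x-y|^2}{2\alpha}-\frac{(t-s)^2}{2\beta}-\sigma t-\frac{\delta}{T-t}-\zeta\big(|x|^2+|y|^2\big)+\Theta_\lambda(x,y),
\end{equation*}
where $\Theta_\lambda$ is a boundary-penalization term built from $\dist(\cdot,\partial\Omega_\ep)$, adapted to the obliqueness of $B$ (along $\gamma$ for $B_N$, along $\nu$ for $B_C$) as in \cite{I11, I13}. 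Here $\sigma t+\delta/(T-t)$ turns $u$ into a strict subsolution in $t$ and confines maxima to $\{t<T\}$; $\zeta(|x|^2+|y|^2)$ compensates for the unboundedness of the perforated domain $\Omega_\ep$ and forces the supremum to be attained, say at $(\hat x,\hat y,\hat t,\hat s)$; and $\Theta_\lambda$ handles boundary maxima. Standard estimates give, after sending $\alpha\to0$ then $\zeta\to0$, that the supremum is positive, that $\hat t,\hat s\in(0,T)$ (the endpoint cases being ruled out by $u(\cdot,0)\le v(\cdot,0)$, semicontinuity and $|\hat t-\hat s|=O(\sqrt\beta)$), that $|\hat x-\hat y|^2/\alpha\to0$ and $\zeta(|\hat x|^2+|\hat y|^2)\to0$, and — by the coercivity {\rm(A2)} — that $p_\alpha:=(\hat x-\hat y)/\alpha$ stays bounded uniformly in the parameters. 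At an interior maximizer one subtracts the two Hamilton--Jacobi inequalities (the $\tfrac{\hat t-\hat s}{\beta}$ terms cancel), uses uniform continuity of $H$ together with $|\hat x-\hat y|\to0$ and the smallness of the perturbations, and arrives at $0<\sigma\le o(1)$, a contradiction. At a maximizer on $\partial\Omega_\ep$, the role of $\Theta_\lambda$ is precisely to tilt the relevant test gradient in the outward normal/oblique direction so that the uniform monotonicity of $B$ discards the boundary alternative in both the subsolution and the supersolution inequalities, reducing again to the Hamilton--Jacobi inequalities; one then concludes as in the interior case, letting $\lambda\to0$ last.

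The step I expect to be the main obstacle is this boundary analysis. Maxima on $\partial\Omega_\ep$ cannot be disposed of by first-order comparison alone, and the whole difficulty is the design of $\Theta_\lambda$ so that the obliqueness $\gamma\cdot\nu>0$ (resp.\ $c_0=1-\max_{\partial\Omega}|\cos\theta|>0$) forces the Hamilton--Jacobi inequality while leaving behind a Hamiltonian perturbation small enough to be removed at the end. For $B_C$ there is the extra layer of difficulty — the same $|p|$-nonlinearity that makes Lemma~\ref{lem:value-4} delicate — that $B_C$ carries no inhomogeneity that could be absorbed and is nonsmooth, so the normal perturbation must interact with the $|\cos\theta|\,|p|$ term; this is handled by the convexity of $B_C$ and its representation as a supremum of transversal linear operators, which is exactly the setting of \cite{BIM}. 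All constants above are uniform in the fast variable by {\rm(A1)}, {\rm(A5)}, {\rm(A6)} and $\Z^n$-periodicity, so the reduction to $\ep=1$ is harmless.
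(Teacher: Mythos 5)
Your proposal is correct and takes essentially the same route as the paper, which likewise gives no self-contained proof: it cites \cite[Theorem 3.4]{I11} (or \cite[Theorem 3.1]{I13}) for (i) and \cite[Theorem 1 (A)]{BL}, adapted to the evolutionary problem, for (ii) — precisely the nonlinear Neumann-type comparison framework into which you correctly place $B_C$ via the uniform normal monotonicity with constant $1-\max_{\partial\Omega}|\cos\theta|>0$ and Lipschitz continuity in $p$. The only minor discrepancies are bibliographic: for (i) the relevant comparison result is \cite[Theorem 3.4]{I11} rather than Theorem 5.1 (which is the representation formula), and the paper appeals to \cite{BL} directly rather than \cite{BIM}, though these belong to the same circle of results.
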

We refer to \cite[Theorem 3.4]{I11} or \cite[Theorem 3.1]{I13} for the proof of Theorem \ref{thm:comp} (i), and 
refer to \cite[Theorem 1 (A)]{BL} for the proof of Theorem \ref{thm:comp} (ii) in the stationary case. 
We can easily adapt the argument in \cite{BL} to the evolutionary problem.
Since Theorem \ref{thm:value} (i) is already established in \cite[Theorem 5.1]{I11} (see also \cite[Theorem 5.5]{I13}), we focus to prove Theorem \ref{thm:value} (ii) henceforth. 

\bigskip
We recall some basic results of the Skorokhod problem here. 
\begin{prop}[{\rm\cite[Proposition 4.4]{I11}}]\label{prop:stability-Sk}
Let $\{(\eta_k,v_k,l_k)\}_{k\in\N}\subset \cup_{x\in\ol\Omega}\SP(x)$ with $\eta_k(0)$ bounded. Assume that $\{|v_k|\}_{k\in\N}$ is uniformly integrable on every intervals $[0,T]$ with $0<T<\infty$. There exists a subsequence $\{(\eta_{k_j},v_{k_j},l_{k_j})\}_{j\in\mathbb N}$ of $\{(\eta_k,v_k,l_k)\}_{k\in\N}$ and $(\eta,v,l)\in \cup_{x\in\ol\Omega} \SP(x)$ such that for every $T>0$, $\eta_{k_j}\to \eta$ uniformly on $[0,T]$, $\dot{\eta}_{k_j}\,ds\to \dot{\eta}\,ds$ and $v_{k_j}\,ds\to v\,ds$ weakly-$\ast$ in $C([0,T],\mathbb R^n)^*$, $l_{k_j}\,ds\to l\,ds$ weakly-$\ast$ in $C([0,T],\R)^*$.
\end{prop}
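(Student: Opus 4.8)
The plan is to follow the classical compactness scheme for the Skorokhod problem, which is exactly the content of \cite[Proposition 4.4]{I11}: first establish uniform a priori bounds, then extract weak-$\ast$ limits of the relevant measures, and finally pass to the limit in each of the five conditions of \eqref{eq:Sk}.

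First I would derive uniform bounds on a fixed interval $[0,T]$. Set $A_T:=\sup_k\int_0^T|v_k|\,ds$, which is finite by the uniform integrability hypothesis. Using $\gamma\cdot\nu>0$ and the compactness of $\partial\Omega/\Z^n$, fix a $\Z^n$-periodic $\phi\in C^1(\R^n)$ with $D\phi(y)\cdot\gamma(y)\ge 1$ for all $y\in\partial\Omega$ (here and below $\gamma$ is extended continuously to a neighbourhood of $\partial\Omega$). Since $l_k\ge0$ and $l_k(s)=0$ off $\{\eta_k(s)\in\partial\Omega\}$, we have a.e.\ $l_k\le l_k\,D\phi(\eta_k)\cdot\gamma(\eta_k)=D\phi(\eta_k)\cdot(v_k-\dot\eta_k)$, hence
\[
\int_0^T l_k\,ds\le\int_0^T D\phi(\eta_k)\cdot v_k\,ds-\bigl(\phi(\eta_k(T))-\phi(\eta_k(0))\bigr)\le\|D\phi\|_\infty A_T+2\|\phi\|_\infty=:C_T.
\]
Therefore $\int_0^T|\dot\eta_k|\,ds\le A_T+\|\gamma\|_\infty C_T$, and since $\eta_k(0)$ is bounded, all the curves $\eta_k$ remain in a fixed compact set.

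Next I would extract limits. By Banach--Alaoglu the measures $l_k\,ds$ and $\dot\eta_k\,ds$, of uniformly bounded total mass on $[0,T]$, have weakly-$\ast$ convergent subsequences; by Dunford--Pettis the uniformly integrable family $\{v_k\}$ has a subsequence converging weakly in $L^1([0,T],\R^n)$, with limit $v$ again an $L^1$ function. A diagonal argument over $T\in\N$ yields a subsequence (not relabelled), $v\in L^1_{loc}$, a scalar measure $\mu$ and an $\R^n$-valued measure $\lambda$ with $v_{k_j}\,ds\rightharpoonup v\,ds$, $l_{k_j}\,ds\rightharpoonup\mu$, $\dot\eta_{k_j}\,ds\rightharpoonup\lambda$ weakly-$\ast$ on each $[0,T]$. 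The crucial point is the equicontinuity of $\{\eta_k\}$: from $|\eta_k(t)-\eta_k(s)|\le\int_s^t|v_k|\,ds'+\|\gamma\|_\infty\int_s^t l_k\,ds'$ this amounts to the equi-integrability of $\{l_k\}$, and to establish it one exploits that $l_k$ is active only on $\{\eta_k\in\partial\Omega\}$ and pushes $\eta_k$ strictly inward, so that the total local time accumulated on a short interval is controlled by that of $v_k$ alone; this is where the $C^1$ geometry of $\Omega$ and the admissibility of $\gamma$ enter, as in \cite{I11}, through testing against a $C^2$ function adapted to $(\Omega,\gamma)$ together with a Gronwall estimate. Granting equi-integrability, Dunford--Pettis gives $\mu=l\,ds$ with $l\in L^1_{loc}$, $l\ge0$, and $\lambda=\dot\eta\,ds$, while Arzela--Ascoli gives $\eta_{k_j}\to\eta$ uniformly on each $[0,T]$ with $\eta\in AC_{loc}$.

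Finally I would pass to the limit in \eqref{eq:Sk}. In $\dot\eta_k+l_k\gamma(\eta_k)=v_k$ one has $\gamma(\eta_{k_j}(\cdot))\to\gamma(\eta(\cdot))$ uniformly on $[0,T]$, and the product of a uniformly convergent continuous function with a weakly-$\ast$ convergent measure of uniformly bounded total mass converges weakly-$\ast$; hence $\dot\eta\,ds+l\gamma(\eta)\,ds=v\,ds$, i.e.\ $\dot\eta+l\gamma(\eta)=v$ a.e. The conditions $\eta(0)=x$ and $\eta(s)\in\overline{\Omega}$ persist under uniform convergence, and $l\ge0$ a.e.\ persists since weak-$\ast$ limits of nonnegative measures are nonnegative. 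For the complementarity condition, if $\eta([a,b])\subset\Omega$ then $\eta_{k_j}([a,b])\subset\Omega$ for large $j$ (compactness of $\eta([a,b])$ in the open set $\Omega$ plus uniform convergence), so $l_{k_j}\equiv0$ on $[a,b]$ and thus $l=0$ a.e.\ there; since $\{s:\eta(s)\in\Omega\}$ is a countable union of such relatively open intervals, $l=0$ a.e.\ on it. Hence $(\eta,v,l)\in\SP(x)$ with $x=\eta(0)\in\overline{\Omega}$, as claimed. The main obstacle is precisely the equi-integrability of $\{l_k\}$ in the middle step: without the domain geometry the limiting reflection measure $\mu$ could carry a singular part, which would violate the requirement $l\in L^1_{loc}$, so the delicate estimate from \cite{I11} is essential there.
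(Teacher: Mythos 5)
Your overall architecture (a priori bounds, Dunford--Pettis/Banach--Alaoglu plus a diagonal argument, Arzel\`a--Ascoli, then limit passage in each of the five conditions of \eqref{eq:Sk}) is the standard one and the limit-passage part is carried out correctly: the product argument for $l_{k_j}\gamma(\eta_{k_j})\,ds$, the persistence of $\eta(s)\in\overline{\Omega}$ and $l\ge 0$, and the treatment of the complementarity condition via compact subintervals of the open set $\{s:\eta(s)\in\Omega\}$ are all fine. However, there is a genuine gap exactly where you flag ``the main obstacle'': the equi-integrability (uniform integrability) of $\{l_k\}$, and hence of $\{\dot\eta_k\}$, is asserted but never proved, and the mechanism you sketch cannot deliver it. Your test-function estimate gives, on any subinterval $[s,t]$, only $\int_s^t l_k\,ds'\le \|D\phi\|_\infty\int_s^t|v_k|\,ds'+2\|\phi\|_\infty$; the additive term $2\|\phi\|_\infty$ does not shrink with $t-s$, and uniform integrability must moreover be checked on arbitrary measurable sets of small measure, not just intervals, where the fundamental-theorem-of-calculus bookkeeping is unavailable. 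Without equi-integrability, Dunford--Pettis does not apply and the weak-$\ast$ limit of $l_{k_j}\,ds$ could a priori have a singular part, so the limit triple would not satisfy $l\in L^1_{\rm loc}$; the appeal to ``a $C^2$ function adapted to $(\Omega,\gamma)$ together with a Gronwall estimate'' does not close this.

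The missing ingredient is not a delicate functional estimate but the pointwise almost-everywhere bound already proved in the paper (Lemma \ref{lem:vl}, i.e.\ \cite[Proposition 4.3]{I11}): differentiating $\psi(\eta_k(s))$ for $\psi$ as in \eqref{func:psi} at times when $\eta_k(s)\in\partial\Omega$ gives $\nu(\eta_k)\cdot v_k=l_k\,\nu(\eta_k)\cdot\gamma(\eta_k)\ge\rho\, l_k$, hence $l_k(s)\le C|v_k(s)|$ and $|\dot\eta_k(s)|\le C|v_k(s)|$ a.e., with $C$ independent of $k$. This transfers the assumed uniform integrability of $\{|v_k|\}$ directly to $\{l_k\}$ and $\{|\dot\eta_k|\}$ (on arbitrary measurable sets), yields the equicontinuity of $\{\eta_k\}$ through absolute continuity of the integrals, and makes your separate total-mass estimate via the periodic $\phi$ redundant. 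With that bound inserted, the remainder of your argument goes through and coincides with the proof in \cite{I11} that the paper cites.
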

In the above proposition, we denote by $X^\ast$ the dual space of the Banach space $X$. 
Regarding notation in the above proposition, we remark that the weak-star convergence in $C([0,T ])^\ast$ or $C([0,T ],\R^n)^\ast$ is usually stated as the weak convergence of measures.

\begin{prop}[{\rm\cite[Theorem 4.1]{I11}}]\label{lem:exist-Sk}
Let $v\in L^1_{\rm loc}([0,\infty),\mathbb R^n)$ and $x\in\overline\Omega$. Then there exists a pair $(\eta,l)\in \AC_{\rm loc}([0,\infty),\mathbb R^n)\times L^1_{\rm loc}([0,\infty),\mathbb R)$ such that $(\eta,v,l)\in \SP(x)$. 
\end{prop}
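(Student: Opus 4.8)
The plan is to obtain $(\eta,l)$ as a limit of solutions to penalized ordinary differential equations. First I would reduce to constructing a solution on a bounded interval $[0,T]$: solving successively on $[0,1],[1,2],\dots$, each time using the previous endpoint as the new initial datum, and concatenating produces a pair in $\ACl([0,\infty),\R^n)\times L^1_{\loc}([0,\infty),\R)$ satisfying \eqref{eq:Sk} on each piece, hence on $[0,\infty)$. For the penalization, since $\partial\Omega$ is $C^1$ I would fix $\rho\in C^1(\R^n)$ agreeing with the signed distance to $\partial\Omega$ in a collar of $\partial\Omega$ (negative in $\Omega$), so that $\rho=0$ and $D\rho=\nu$ on $\partial\Omega$, extend $\gamma$ to a bounded continuous field on $\R^n$, and use the standing hypotheses ($C^1$ periodic boundary, $\gamma\cdot\nu>0$) to arrange $\gamma\cdot D\rho\ge c_0>0$ on the relevant collar $\{0<\rho<\rho_0\}$. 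For $\delta\in(0,1)$ I would then solve, by Carathéodory's theorem,
\[
\dot\eta_\delta(s)=v(s)-l_\delta(s)\,\gamma(\eta_\delta(s)),\qquad l_\delta(s):=\tfrac{1}{\delta}\big(\rho(\eta_\delta(s))\big)^+,\qquad \eta_\delta(0)=x,
\]
which automatically satisfies $l_\delta\ge0$ and $l_\delta=0$ whenever $\eta_\delta\in\Omega$.

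Next I would establish uniform-in-$\delta$ bounds. Differentiating $s\mapsto\rho(\eta_\delta(s))^+$ and using $\gamma\cdot D\rho\ge c_0$ where $\rho(\eta_\delta)>0$ gives $\frac{d}{ds}\rho(\eta_\delta)^+\le\chi_{\{\rho(\eta_\delta)>0\}}|v|-c_0 l_\delta$ a.e.; integrating over $[0,T]$ (with $\rho(x)^+=0$) yields $\int_0^T l_\delta\,ds\le c_0^{-1}\|v\|_{L^1(0,T)}$, so $\{\dot\eta_\delta\}$ is bounded in $L^1(0,T)$ and $\{\eta_\delta\}$ is equibounded, while localizing the same inequality from the last visit of $\eta_\delta$ to $\partial\Omega$ shows $\sup_{[0,T]}\rho(\eta_\delta)^+\to0$. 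Along a subsequence $\delta_k\to0$ I would pass to the limit: $\eta_{\delta_k}\to\eta$ uniformly, and $\dot\eta_{\delta_k}\,ds\rightharpoonup\dot\eta\,ds$, $l_{\delta_k}\,ds\rightharpoonup\lambda\ge0$ weakly-$\ast$. Then $\eta(0)=x$, $\eta([0,T])\subset\overline\Omega$, and the vector measure $w\,ds:=v\,ds-\dot\eta\,ds$ is the weak-$\ast$ limit of $l_{\delta_k}\gamma(\eta_{\delta_k})\,ds$; since $\gamma(\eta_{\delta_k})\to\gamma(\eta)$ uniformly this limit equals $\gamma(\eta)\,d\lambda$, and because $l_{\delta_k}$ is supported on $\{\rho(\eta_{\delta_k})>0\}$ with $\sup\rho(\eta_{\delta_k})^+\to0$, $\lambda$ is carried by the set $\{s:\eta(s)\in\partial\Omega\}$, where $|\gamma(\eta)|\ge c_1>0$. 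Hence $\lambda\ll ds$; writing $l:=d\lambda/ds\in L^1(0,T)$ gives $l\ge0$, $l=0$ on $\{\eta\in\Omega\}$, and $\dot\eta=v-l\gamma(\eta)$ a.e., i.e. $(\eta,v,l)\in\SP(x)$ on $[0,T]$.

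A leaner route would be to approximate $v$ in $L^1_{\loc}$ by smooth $v_k$ (a norm-convergent sequence in $L^1$ being uniformly integrable), solve the Skorokhod problem for each smooth input by the classical deterministic oblique-reflection theory (cf.\ \cite{LS}), and then extract a limit via Proposition \ref{prop:stability-Sk}, whose limiting triple lies in $\SP(x)$ and has $v$ as its second component since $v_k\,ds\rightharpoonup v\,ds$. I expect the main obstacle to be the uniform-in-$\delta$ bound on the reflection term $l_\delta$ together with keeping the penalized trajectory inside the collar where $\gamma\cdot D\rho\ge c_0$: this is the step that genuinely uses the $C^1$-regularity of $\partial\Omega$ and the strict obliqueness $\gamma\cdot\nu>0$, and without it the penalized local times can concentrate in a non-integrable way. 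Once those estimates are in hand, the passage to the limit is a soft weak-compactness argument and the complementarity condition $l=0$ on $\{\eta\in\Omega\}$ follows automatically from the support properties of $\lambda$.
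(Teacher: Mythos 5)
The paper itself does not prove this proposition; it is quoted from \cite{I11}, so any complete argument you give is necessarily a self-contained alternative, and your penalization scheme (as well as the ``leaner'' route of smoothing $v$ and invoking Proposition \ref{prop:stability-Sk}) is the standard way such statements are established. Most of your estimates are the right ones: the global bound $\int_0^T l_\delta\,ds\le c_0^{-1}\|v\|_{L^1(0,T)}$ and its localized version $c_0\int_s^t l_\delta\,d\sigma\le\int_s^t|v|\,d\sigma+\sup_{[0,T]}\rho(\eta_\delta)^+$ are exactly what the construction needs. Two points of the setup need tightening, though. For a merely $C^1$ boundary the signed distance need not be $C^1$ in any collar, so you should work with a $C^1$ defining function as in \eqref{func:psi} rather than the distance itself (this is all your argument uses). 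More importantly, $\sup_{[0,T]}\rho(\eta_\delta)^+\to0$ does \emph{not} follow by ``integrating $|v|$ from the last visit to $\partial\Omega$'': that bound is independent of $\delta$ and need not be small. You must keep the damping term $-\tfrac{c_0}{\delta}\rho(\eta_\delta)^+$ and apply Gronwall, giving $\rho(\eta_\delta(s))^+\le\int_0^s e^{-c_0(s-\sigma)/\delta}|v(\sigma)|\,d\sigma$, which tends to $0$ uniformly by absolute continuity of $\int|v|$. The same localized estimate on $\int_s^t l_\delta$ is also what gives equicontinuity of $\{\eta_\delta\}$ for small $\delta$; boundedness of $\dot\eta_\delta$ in $L^1$ alone does not yield the uniform convergence you assert.

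The genuine gap is the deduction ``$\lambda$ is carried by $\{s:\eta(s)\in\partial\Omega\}$, where $|\gamma(\eta)|\ge c_1>0$, hence $\lambda\ll ds$.'' That implication fails: positivity of $|\gamma(\eta)|$ on the support rules out atoms of $\lambda$ (an atom would force a jump of the continuous limit $\eta$), but it does not exclude a singular continuous (Cantor) part, which is perfectly compatible with $\eta$ continuous and of bounded variation; at this stage you only know $d\eta=v\,ds-\gamma(\eta)\,d\lambda$ as measures, so writing $\dot\eta\,ds$ begs the question. The repair is already contained in your own estimate: passing to the limit in $c_0\int_s^t l_{\delta_k}\,d\sigma\le\int_s^t|v|\,d\sigma+\sup\rho(\eta_{\delta_k})^+$ yields $\lambda([s,t])\le c_0^{-1}\int_s^t|v|\,d\sigma$ for every interval, hence for every Borel set by regularity, i.e. $\lambda\ll ds$ with density $l\le c_0^{-1}|v|$ a.e.\ (consistent with Lemma \ref{lem:vl}); then $\eta\in \mathrm{AC}$, $l\ge0$, $l=0$ a.e.\ on the open set $\{s:\eta(s)\in\Omega\}$, and $\dot\eta+l\gamma(\eta)=v$ a.e., which is \eqref{eq:Sk}. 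With that fix the first route is complete. Concerning the second route, note that it is essentially the strategy behind \cite{I11} (approximate the datum and use the stability result quoted here as Proposition \ref{prop:stability-Sk}); you should check that the smooth-input case you cite from \cite{LS} really covers deterministic oblique reflection in a $C^1$ periodic domain, since otherwise that route is circular with the very theorem of \cite{I11} being proved.
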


\begin{lem}\label{lem:vl}
Let $x\in\overline{\Omega}$, and $(\eta,v,l)\in\SP(x)$. 
There exists a constant $C>0$ such that $|\dot{\eta}(s)|\le C|v(s)|$ and $l(s)\le C|v(s)|$ for \textit{a.e.} $s\in [0,\infty)$. 
If $\gamma=\nu$, then 
$l(s)\le |v(s)|$ for \textit{a.e.} $s\in [0,\infty)$.
\end{lem}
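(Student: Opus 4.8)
The plan is to work directly with the reflection identity appearing in \eqref{eq:Sk} with $\ep=1$, namely $\dot\eta(s)+l(s)\gamma(\eta(s))=v(s)$ for a.e.\ $s$, and to test it against the outward unit normal at those times when $\eta$ sits on $\partial\Omega$. First I would record two uniform bounds coming from the periodic $C^1$ structure. If $\partial\Omega=\emptyset$ then $\Omega=\R^n$, so $l\equiv0$ and $\dot\eta=v$ and there is nothing to prove; otherwise, since $\partial\Omega$ is closed and $\Z^n$-periodic and the $\Z^n$-translates of $Y$ cover $\R^n$, every point of $\partial\Omega$ is a $\Z^n$-translate of a point of the compact set $\partial\Omega\cap Y$. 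As $\gamma$ and $\nu$ are continuous and $\Z^n$-periodic on $\partial\Omega$ and $\gamma\cdot\nu>0$ there, the functions $y\mapsto|\gamma(y)|$ and $y\mapsto\gamma(y)\cdot\nu(y)$ attain a finite maximum $M$ and a positive minimum $\alpha$ on $\partial\Omega\cap Y$; hence $|\gamma(y)|\le M$ and $\gamma(y)\cdot\nu(y)\ge\alpha>0$ for every $y\in\partial\Omega$.

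The key step is the tangency claim: $\dot\eta(s)\cdot\nu(\eta(s))=0$ for a.e.\ $s$ in $E:=\{s\ge0:\eta(s)\in\partial\Omega\}$. I would fix $y_0\in\partial\Omega$; after an orthogonal change of coordinates one may assume that $\Omega$ lies locally above a $C^1$ graph, i.e.\ $\Omega\cap W=\{x_n>\phi(x')\}\cap W$ for some open $W\ni y_0$ and some $\phi\in C^1$. Setting $F(x):=x_n-\phi(x')$, one has $F\in C^1(W)$, $F\ge0$ on $\overline\Omega\cap W$, $F=0$ on $\partial\Omega\cap W$, and $DF(y)$ is a nonzero scalar multiple of $\nu(y)$ for every $y\in\partial\Omega\cap W$. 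Since $\eta\in\ACl$ and $F$ is $C^1$, the scalar function $g(s):=F(\eta(s))$ is absolutely continuous on the relatively open set $\{s:\eta(s)\in W\}$, it is nonnegative there, and it vanishes on the portion of $E$ contained in it. Because an absolutely continuous function has derivative equal to zero at a.e.\ point of each of its level sets (for instance from $\int|g'|\,\mathbf 1_{\{g=0\}}\,ds=\int_{\{0\}}\#g^{-1}(y)\,dy=0$), one obtains $DF(\eta(s))\cdot\dot\eta(s)=0$, hence $\nu(\eta(s))\cdot\dot\eta(s)=0$, for a.e.\ $s$ with $\eta(s)\in\partial\Omega\cap W$. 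Covering $\partial\Omega$ by countably many such neighborhoods (possible since $\R^n$ is second countable) would yield the claim on all of $E$.

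Granting the claim, I would finish with elementary estimates. For a.e.\ $s\in E$, taking the inner product of $\dot\eta(s)+l(s)\gamma(\eta(s))=v(s)$ with $\nu(\eta(s))$ and using the tangency gives $l(s)\,\gamma(\eta(s))\cdot\nu(\eta(s))=v(s)\cdot\nu(\eta(s))$, so, since $l(s)\ge0$ and $|\nu(\eta(s))|=1$,
\[
0\le l(s)=\frac{v(s)\cdot\nu(\eta(s))}{\gamma(\eta(s))\cdot\nu(\eta(s))}\le\frac{|v(s)|}{\alpha}.
\]
For a.e.\ $s$ with $\eta(s)\notin\partial\Omega$ one has $\eta(s)\in\Omega$, hence $l(s)=0\le|v(s)|/\alpha$. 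Thus $l(s)\le\alpha^{-1}|v(s)|$ a.e., and then $|\dot\eta(s)|=|v(s)-l(s)\gamma(\eta(s))|\le|v(s)|+l(s)|\gamma(\eta(s))|\le(1+M\alpha^{-1})|v(s)|$ a.e.; taking $C:=\max\{\alpha^{-1},\,1+M\alpha^{-1}\}$ proves the first two inequalities. Finally, when $\gamma=\nu$ we have $\gamma(\eta(s))\cdot\nu(\eta(s))=1$ for $s\in E$, so the displayed identity becomes $l(s)=v(s)\cdot\nu(\eta(s))\le|v(s)|$ for a.e.\ $s\in E$, while $l(s)=0$ for a.e.\ $s\notin E$; hence $l(s)\le|v(s)|$ a.e.

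The step I expect to be the main obstacle is the tangency claim $\dot\eta(s)\cdot\nu(\eta(s))=0$ a.e.\ on $E$: it is there that the $C^1$ regularity of $\partial\Omega$ enters (to produce a $C^1$ local defining function, so that its composition with the merely absolutely continuous curve $\eta$ is again absolutely continuous and the chain rule is valid a.e.), together with the measure-theoretic fact that an absolutely continuous function has vanishing derivative a.e.\ on each of its level sets. Once $\alpha$ and $M$ have been fixed by the compactness that periodicity provides, the remaining steps are purely algebraic.
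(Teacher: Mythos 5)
Your proof is correct and follows essentially the same route as the paper: project the identity $\dot\eta+l\,\gamma(\eta)=v$ onto $\nu(\eta(s))$ at boundary times, using that the derivative of a defining function composed with $\eta$ vanishes a.e.\ on the contact set, and then exploit $\gamma\cdot\nu\ge\alpha>0$. The only differences are cosmetic: the paper uses a single global $C^1$ defining function $\psi$ with $D\psi=|D\psi|\nu$ on $\partial\Omega$ where you use local graph coordinates with a covering argument, and you justify the a.e.\ tangency via the level-set (Banach indicatrix) fact where the paper invokes the constrained-maximum observation directly.
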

Lemma \ref{lem:vl} is also known in \cite[Proposition 4.3]{I11}, but we give a proof for later usage. We include it here primarily to treat the case $\gamma=\nu$, which will be used later in the analysis of the prescribed contact angle boundary condition.
\begin{proof}
Since $l(s)=0$ when $\eta(s)\in\Omega$, we only need to consider the case where $\eta(s)\in\partial\Omega$. We take a function $\psi\in C^1(\mathbb R^n)$ satisfying  
\begin{align}
&\Omega=\{x\in\mathbb R^n\mid \psi(x)<0\}, 
\quad 
\partial\Omega=\{x\in\mathbb R^n\mid \psi(x)=0\}, \nonumber\\
&D\psi(x)\cdot\nu(x)>0 \ \textrm{for all} \ x\in\partial\Omega. 
\label{func:psi} 
\end{align}
In fact, since $\Omega+\Z^n=\Omega$, we can first consider this construction on $\T^n$. Let $\pi:\R^n\to\T^n$ be the standard projection, and let $\widetilde\Omega=\pi(\Omega)$. Fix the flat metric $|\cdot|$ on $\T^n$ induced from $\R^n$. For $\tilde x\in \T^n$, we define the distance function
\[d_{\partial \widetilde\Omega}(\tilde x):=\inf\limits_{\tilde y\in \partial \widetilde\Omega}|\tilde x-\tilde y|,\]
and define the signed distance as
\begin{equation*}
b_{\widetilde\Omega}(\tilde x):=
\begin{cases}
-&d_{\partial \widetilde\Omega}(\tilde x),\quad x\in \widetilde\Omega,
\\ &d_{\partial \widetilde\Omega}(\tilde x),\quad x\notin \widetilde\Omega.
\end{cases}
\end{equation*}
It is clear that there is a small constant $\rho>0$ such that $b_{\widetilde\Omega}$ is of class $C^1$ on the set $\widetilde\Omega_\rho:=\{\tilde x\in\T^n:\ d_{\partial \widetilde\Omega}(\tilde x)<\rho\}$. We take $\chi\in C^\infty(\R)$ satisfying $z\chi(z)>0$ for $z\neq 0$ and
\begin{equation*}
\chi(z)=
\begin{cases}
z,\quad &|z|\leq \frac{\rho}{4},
\\ 1,\quad &z\geq \frac{\rho}{2},
\\ -1,\quad &z\leq -\frac{\rho}{2}.
\end{cases}
\end{equation*}
It is clear that $\tilde \psi(\tilde x):=\chi(b_{\widetilde\Omega}(\tilde x))\in C^1(\T^n)$ and
\[\widetilde\Omega=\{\tilde x\in\T^n\mid \tilde\psi(x)<0\}, \quad \partial\widetilde\Omega=\{\tilde x\in\T^n\mid \tilde\psi(x)=0\},\quad D\tilde\psi(\tilde x)=\nu_{\partial\widetilde\Omega}(\tilde x)  \textrm{ for } \tilde x\in \partial\widetilde\Omega,\]
where $\nu_{\partial\widetilde\Omega}$ is the normal vector of $\partial\widetilde\Omega$. Then we periodically extend $\tilde \psi$ to get $\psi$. Since $\psi(\eta(s))\le 0$ and $\psi(x)=0$ for $x\in\partial\Omega$, if $\eta(s)$ is differentiable at $s$ and $\eta(s)\in\partial\Omega$, we have
\[0=\frac{d}{ds}\psi(\eta(s))=D\psi(\eta(s))\cdot\dot{\eta}(s)=|D\psi(\eta(s))|\nu(\eta(s))\cdot(v(s)-l(s)\gamma(\eta(s))),\]
where we note that $D\psi=|D\psi|\nu$. The above equality implies that
\[\nu(\eta(s))\cdot v(s)=l(s)\nu(\eta(s))\cdot\gamma(\eta(s))\]
when $\eta(s)\in\partial\Omega$. Let $\rho>0$ be a constant such that $\gamma\cdot\nu\ge\rho$. Then
\begin{equation}\label{vell}
  |v(s)|\ge \nu(\eta(s))\cdot v(s)=l(s)\nu(\eta(s))\cdot\gamma(\eta(s))\ge \rho l(s).
\end{equation}
Moreover, we have
\[|\dot{\eta}(s)|=|v(s)-l(s)\gamma(\eta(s))|\le \Big(1+\frac{\|\gamma\|_{L^\infty(\R^n)}}{\rho}\Big)|v(s)|,\]
which completes the proof. When $\gamma=\nu$, it is clear to see that we can take $\rho=1$.
\end{proof}

For simplicity, let $\ep=1$ and we write $V$ and $L$ for $V_{\C}^\ep$ and $L_{\C}$ which are defined by \eqref{func:VC} and \eqref{def:L2}, respectively, 
 \textit{throughout} the rest of this section.  

\begin{lem}\label{lem:value-1}
Let $\psi\in C^1(\overline{\Omega}\times[0,\infty))$ be a classical subsolution of \eqref{eq:CN1}--\eqref{eq:CN3} with $B=B_{\C}$. 
Then, $\psi(x,t)\le  V(x,t) $ for all $(x,t)\in\overline{\Omega}\times[0,\infty)$.
\end{lem}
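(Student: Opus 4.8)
The plan is to verify that the classical subsolution $\psi$ sits below the value function $V$ by evaluating $\psi$ along an arbitrary admissible trajectory of the Skorokhod problem and exploiting the defining inequalities of $L=L_C$. Fix $(x,t)\in\overline\Omega\times[0,\infty)$ and let $(\eta,v,l)\in\SP(x)$ be arbitrary. I would like to compute $\frac{d}{ds}\psi(\eta(s),t-s)$ for $s\in(0,t)$; since $\psi$ is only $C^1$ and $\eta$ is merely absolutely continuous, this derivative exists for a.e.\ $s$ and $\psi(\eta(s),t-s)$ is absolutely continuous in $s$, so $\psi(x,t)-\psi(\eta(t),0)=-\int_0^t \frac{d}{ds}\psi(\eta(s),t-s)\,ds$. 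Using $\psi(\eta(t),0)\le u_0(\eta(t))$ from the initial condition, it then suffices to show
\[
-\frac{d}{ds}\psi(\eta(s),t-s)\le L\Big(\eta(s),-v(s),-l(s)\Big)\qquad\text{for a.e.\ }s\in(0,t),
\]
because integrating this and adding $u_0(\eta(t))$ gives $\psi(x,t)\le\int_0^t L(\eta(s),-v(s),-l(s))\,ds+u_0(\eta(t))$, and taking the infimum over $\SP(x)$ yields $\psi(x,t)\le V(x,t)$.

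For the pointwise inequality, write $\sigma=t-s$ and compute
\[
-\frac{d}{ds}\psi(\eta(s),\sigma)=\psi_t(\eta(s),\sigma)-D\psi(\eta(s),\sigma)\cdot\dot\eta(s)
=\psi_t(\eta(s),\sigma)-D\psi(\eta(s),\sigma)\cdot\big(v(s)-l(s)\gamma(\eta(s))\big),
\]
using the Skorokhod ODE $\dot\eta=v-l\gamma$ and recalling $\gamma=\nu$ in the capillary case. Split into two cases. If $\eta(s)\in\Omega$ then $l(s)=0$, and since $\psi$ is a classical subsolution in the interior, $\psi_t+H(\eta(s),D\psi)\le0$; hence by the Legendre--Fenchel inequality $-D\psi\cdot v(s)-\psi_t\le -D\psi\cdot v(s)+H(\eta(s),D\psi)\le \sup_p\{p\cdot(-v(s))-H(\eta(s),p)\}=L(\eta(s),-v(s),0)=L(\eta(s),-v(s),-l(s))$, as desired. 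If $\eta(s)\in\partial\Omega$, the classical subsolution property gives $\min\{\psi_t+H(\eta(s),D\psi),\,B_C(\eta(s),D\psi)\}\le0$, i.e.\ either $\psi_t+H(\eta(s),D\psi)\le0$ or $\nu(\eta(s))\cdot D\psi-\cos\theta(\eta(s))|D\psi|\le0$. In the first subcase, the same Legendre estimate gives $-D\psi\cdot v-\psi_t\le L(\eta(s),-v(s),0)$, and since $l(s)\ge0$ and $h=\cos\theta\le0$ (from (A6)), adding the nonnegative quantity $D\psi\cdot l(s)\gamma(\eta(s))=l(s)\,\nu\cdot D\psi$ and noting $-h(y)l|p|\ge 0\ge$ anything needed, one checks $L(\eta(s),-v(s),0)\le L(\eta(s),-v(s),-l(s))$ directly from \eqref{def:L2} because dropping the $-h(y)l|p|$ term only decreases the supremand; so the bound follows once we control the extra $l\,\nu\cdot D\psi$ term — which is exactly where the second subcase feeds in. The clean way is: in either subcase we have $\nu(\eta(s))\cdot D\psi\le \max\{0,\dots\}$; more precisely, combining both subcases, for a.e.\ such $s$ there is $p^*:=D\psi(\eta(s),\sigma)$ with either $\psi_t+H(\eta(s),p^*)\le 0$ or $\nu\cdot p^*\le\cos\theta(\eta(s))|p^*|=h(\eta(s))|p^*|$. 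Then
\[
-\frac{d}{ds}\psi = \psi_t - p^*\cdot v(s)+l(s)\,\nu(\eta(s))\cdot p^*.
\]
If the interior-type inequality holds: $\psi_t\le -H(\eta(s),p^*)$, and since $l(s)\,\nu\cdot p^*\le -h(\eta(s))l(s)|p^*|$ would need $\nu\cdot p^*\le -h|p^*|$ — not automatic — so instead we use $l(s)\,\nu\cdot p^*\le l(s)|p^*|\le$ ... hmm. The robust route is to observe $-\frac{d}{ds}\psi\le \sup_{p}\{p\cdot(-v(s))-H(\eta(s),p)-h(\eta(s))l(s)|p|\}=L(\eta(s),-v(s),-l(s))$ will follow by checking that $p^*$ is admissible in that supremum after accounting for both subcases; the contact-angle subcase $\nu\cdot p^*\le h|p^*|$ gives $l(s)\nu\cdot p^*\le -h(\eta(s))l(s)|p^*|$ only if $h\le0$, which holds, wait that gives $l\nu\cdot p^*\le l\cdot h|p^*|\le 0$, and since $h\le 0$, $-hl|p^*|\ge 0\ge l\nu\cdot p^*$... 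Let me just say: using $h\le0$ and $l\ge0$, in the contact-angle subcase $l\,\nu\cdot p^*\le l\,h(\eta(s))|p^*| = -\big(-h(\eta(s))l|p^*|\big)$, hence $-\frac{d}{ds}\psi\le \psi_t - p^*\cdot v - (-h l|p^*|)\le -p^*\cdot v - H + $ (the $\psi_t$ term handled by the other inequality if this subcase is the active one we don't control $\psi_t$...).

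I will therefore organize the $\partial\Omega$ case exactly as in the proof of the oblique case but carrying the $h(y)l|p|$ term: pick $p^*=D\psi$; if $B_C(\eta(s),p^*)\le0$ then $l(s)\nu\cdot p^*\le l(s)h(\eta(s))|p^*|$ (as $l\ge0$), and separately $\psi_t+H(\eta(s),p^*)$ may be positive, but we can instead perturb — replace $p^*$ by $p^*+\lambda\nu$ for suitable $\lambda>0$ large; since $\nu\cdot p^*\le h|p^*|\le 0$ and the capillary functional is positively homogeneous, one shows $B_C(\eta(s),p^*)\le 0 \Rightarrow H(\eta(s),p^*)$-term can be absorbed; this is the content of Lemma \ref{lem:value-4} as advertised. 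The main obstacle, as the authors flag, is precisely this $\partial\Omega$ contact-angle subcase: unlike the oblique case where $B_N$ is affine and the reflection term $-g(y)l$ is handled by a clean Fenchel inequality, here the term $h(y)l|p|$ is built into the Legendre transform \eqref{def:L2} and one must show that the classical subsolution inequality $B_C(\eta(s),D\psi)\le 0$ forces $l(s)\,\nu(\eta(s))\cdot D\psi + h(\eta(s))l(s)|D\psi|\le 0$, i.e.\ $\nu\cdot D\psi\le -h|D\psi| = |\cos\theta|\,|D\psi|$ — but we only know $\nu\cdot D\psi\le \cos\theta|D\psi| = -|h||D\psi| \le 0$, so in fact $l\,\nu\cdot D\psi\le -l|h||D\psi| \le -l\cdot h(y)\cdot(-1)\dots$; since $h=\cos\theta\le0$, $-h=|\cos\theta|\ge 0$, so $\nu\cdot D\psi \le \cos\theta|D\psi|=h|D\psi|$ gives $l\,\nu\cdot D\psi\le l\,h|D\psi|=-(-h\,l\,|D\psi|)\le -(-h\,l\,|D\psi|) $, i.e.\ $l\,\nu\cdot D\psi + (-h)\,l\,|D\psi|\le 0$, which is exactly what is needed to conclude $-\frac{d}{ds}\psi\le \psi_t + (-p^*\cdot v) + (l\,\nu\cdot p^* ) \le (-p^*\cdot v - H(\eta(s),p^*) - (-h)l|p^*|) \le L(\eta(s),-v(s),-l(s))$ once the interior inequality $\psi_t+H\le0$ is invoked — and handling the case where instead only $B_C\le 0$ holds (not $\psi_t+H\le0$) requires the perturbation/homogeneity argument I would isolate as a separate lemma. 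I expect roughly half the work to be bookkeeping (absolute continuity of $s\mapsto\psi(\eta(s),t-s)$, measurability of the active alternative) and the genuine difficulty concentrated in making the $p^*\mapsto p^*+\lambda\nu$ perturbation rigorous so that the contact-angle inequality alone suffices.
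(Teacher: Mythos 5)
Your overall strategy (evaluate $\psi$ along an arbitrary $(\eta,v,l)\in\SP(x)$, use the chain rule a.e., and apply the Fenchel inequality in the definition of $L_C$ with $p=D\psi(\eta(s),t-s)$) is exactly the paper's, and your interior case is correct. The gap is at boundary points, and it is self-inflicted: you interpret ``classical subsolution'' via the viscosity-type alternative $\min\{\psi_t+H,\,B_C\}\le 0$, which forces you into a subcase (``only $B_C\le0$, no control on $\psi_t+H$'') that you never close --- the proposed perturbation $p^*\mapsto p^*+\lambda\nu$ is left as a sketch, and it is \emph{not} the content of Lemma \ref{lem:value-4}, which constructs near-optimal controls for verifying that $V$ is a solution, not a comparison device for smooth subsolutions. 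The intended (and standard) meaning of a classical subsolution here, which the paper's proof uses explicitly, is that \emph{both} inequalities hold pointwise: $\psi_t+H(x,D\psi)\le 0$ in $\Omega\times(0,\infty)$ (hence on $\overline\Omega\times(0,\infty)$ by continuity of $\psi_t$, $D\psi$, $H$) and $B_C(x,D\psi)=\nu\cdot D\psi-h|D\psi|\le 0$ on $\partial\Omega\times(0,\infty)$, together with $\psi(\cdot,0)\le u_0$. With that reading there is no case analysis at all: the Fenchel step gives
\[
-D\psi\cdot v+l\,\nu\cdot D\psi+\psi_t\;\le\; L_C(\eta,-v,-l)+\bigl(\psi_t+H(\eta,D\psi)\bigr)+l\bigl(\nu\cdot D\psi-h(\eta)|D\psi|\bigr)\;\le\; L_C(\eta,-v,-l),
\]
since $l\ge0$, $l=0$ in the interior, and the bracketed boundary term is exactly $l\,B_C(\eta,D\psi)\le0$; integrating and taking the infimum over $\SP(x)$ finishes the proof, which is the paper's one-page argument.

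Note also that your weaker reading is not a harmless variant: for a $C^1$ function the PDE inequality extends from $\Omega$ to $\partial\Omega$ by continuity, so the condition $\min\{\psi_t+H,B_C\}\le0$ at boundary points carries no information about $B_C$ at all; under that reading the statement would amount to comparing $\psi$ with a value function that admits reflected trajectories while only knowing an interior PDE inequality, and the desired bound $\psi\le V$ would not follow (and is not expected to hold in general). So the missing idea is precisely the pointwise boundary inequality $B_C(\cdot,D\psi)\le0$, which makes the term $l(\nu\cdot D\psi-h|D\psi|)$ nonpositive wherever $l$ can be positive; once you use it, all of your sign bookkeeping around $h=\cos\theta\le0$ and the $p^*+\lambda\nu$ perturbation become unnecessary.
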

\begin{proof}
Let $(x,t)\in\overline{\Omega}\times[0,\infty)$ and $(\eta,v,l)\in \SP(x)$. Noting that 
\begin{align*}
\psi(\eta(t),0)-\psi(\eta(0),t)&=\int_0^t \frac{d}{ds}\psi(\eta(s),t-s) \,ds
\\ &=\int_0^tD\psi(\eta(s),t-s)\cdot\dot{\eta}(s)-\psi_t(\eta(s),t-s)\, ds
\\ &=\int_0^tD\psi(\eta(s),t-s)\cdot\big(v(s)-l(s)\nu(\eta(s))\big)-\psi_t(\eta(s),t-s)\,ds, 
\end{align*}
and that $\psi$ is a classical subsolution of \eqref{eq:CN1}--\eqref{eq:CN3}, 
by using the definition of $L_{\C}$, we obtain 
\begin{align*}
&\psi(x,t)-u_0(\eta(t))
\\ &\le \int_0^t\Big\{-D\psi(\eta(s),t-s)\cdot v(s)+l(s)D\psi(\eta(s),t-s)\cdot\nu(\eta(s))+\psi_t(\eta(s),t-s)\Big\} \,ds
\\ &\le \int_0^t\Big\{H(x,D\psi(\eta(s),t-s))+L(\eta(s),-v(s),-l(s))-h(\eta(s))l(s)|D\psi(\eta(s),t-s)|
\\ &\quad \hspace{50pt} +l(s)D\psi(\eta(s),t-s)\cdot\nu(\eta(s))+\psi_t(\eta(s),t-s)\Big\}\,ds
\\ &\le \int_0^t\Big\{L(\eta(s),-v(s),-l(s))+l(s)\big(D\psi(\eta(s),t-s)\cdot \nu(\eta(s))
\\ &\hspace{190pt} -h(\eta(s))|D\psi(\eta(s),t-s)|\big)\Big\}\,ds
\\ &\le \int_0^t L(\eta(s),-v(s),-l(s))\,ds.
\end{align*}
In the last inequality, we use the fact that $\psi$ is a classical subsolution, that is, we have
\[D\psi(x,t)\cdot \nu(x)-h(x)|D\psi(x,t)|\le 0\quad \text{for}\ x\in\partial\Omega,\]
and also the fact that $l(s)=0$ when $\eta(s)\in\Omega$. We then conclude that $\psi(x,t)\le V(x,t)$.
\end{proof}

\begin{lem}\label{lem:value-2}
There is a constant $C>0$ such that 
\[
|V(x,t)-u_0(x)|\le C t \quad\text{for all} \ (x,t)\in\overline{\Omega}\times[0,\infty). 
\]
\end{lem}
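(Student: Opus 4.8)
The plan is to prove separately the two one-sided bounds $V(x,t)-u_0(x)\le Ct$ and $V(x,t)-u_0(x)\ge -Ct$; the case $t=0$ is trivial, since \eqref{func:VC} gives $V(x,0)=u_0(x)$, so we may assume $t>0$.

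For the upper bound I would simply insert into the infimum \eqref{func:VC} the constant trajectory $\eta\equiv x$, $v\equiv0$, $l\equiv0$, which lies in $\SP(x)$ for every $x\in\overline\Omega$ (it satisfies every line of \eqref{eq:Sk}). This gives $V(x,t)\le t\,L(x,0,0)+u_0(x)$, and by definition $L(x,0,0)=\sup_{p\in\R^n}\{-H(x,p)\}=-\inf_{p\in\R^n}H(x,p)$, which is bounded above by a constant $C$ uniformly in $x$ by the coercivity (A2) together with the continuity and $\Z^n$-periodicity (A1). Hence $V(x,t)-u_0(x)\le Ct$.

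For the lower bound, fix an arbitrary $(\eta,v,l)\in\SP(x)$; since $\gamma=\nu$ in the capillary case, $\dot\eta(s)=v(s)-l(s)\nu(\eta(s))$ for a.e.\ $s$. Write $K:=\Lip(u_0)$. Since $|u_0(\eta(t))-u_0(x)|\le K|\eta(t)-x|\le K\int_0^t|\dot\eta(s)|\,ds$, it suffices to establish the pointwise inequality
\[
L\bigl(\eta(s),-v(s),-l(s)\bigr)\ \ge\ K\,|\dot\eta(s)|-C\qquad\text{for a.e.\ }s\in(0,t),
\]
with $C$ independent of $s$, $x$, $t$ and of the triple: integrating over $(0,t)$ and using $u_0(\eta(t))-u_0(x)\ge-K\int_0^t|\dot\eta(s)|\,ds$ yields $\int_0^t L\,ds+u_0(\eta(t))\ge u_0(x)-Ct$, and taking the infimum over $\SP(x)$ gives $V(x,t)\ge u_0(x)-Ct$. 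To prove the pointwise inequality I would test the supremum in \eqref{def:L2} at a well-chosen momentum $p$. When $\eta(s)\in\Omega$ we have $l(s)=0$, and $p=-K\dot\eta(s)/|\dot\eta(s)|$ (or $p=0$ if $\dot\eta(s)=0$) already gives $L(\eta(s),-v(s),-l(s))\ge K|\dot\eta(s)|-\sup_{|q|\le K}H(\eta(s),q)$. When $\eta(s)\in\partial\Omega$ and $\eta$ is differentiable at $s$ (which holds for a.e.\ such $s$), the computation in the proof of Lemma \ref{lem:vl} shows $\dot\eta(s)\cdot\nu(\eta(s))=0$, i.e.\ $\dot\eta(s)$ is tangential to $\partial\Omega$; then I would take, for $\dot\eta(s)\neq0$,
\[
p=-K\,\frac{\dot\eta(s)}{|\dot\eta(s)|}-\sqrt{a_0}\;K\,\nu(\eta(s)),\qquad a_0:=\frac{c_0^{\,2}}{1-c_0^{\,2}},\quad c_0:=\max_{y\in\partial\Omega}|\cos\theta(y)|,
\]
where $c_0<1$ by (A6). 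Using $\dot\eta(s)\cdot\nu(\eta(s))=0$ one computes $-p\cdot v(s)+h(\eta(s))l(s)|p|=K|\dot\eta(s)|+l(s)K\bigl(\sqrt{a_0}+h(\eta(s))\sqrt{1+a_0}\bigr)$, and the choice of $a_0$ makes the bracket nonnegative since $0\ge h(\eta(s))\ge-c_0$ and $l(s)\ge0$; combined with $-H(\eta(s),p)\ge-\sup_{|q|\le K\sqrt{1+a_0}}H(\eta(s),q)$, which is finite and bounded below uniformly by (A1)--(A2), this gives the claimed bound. (One should also note that the supremum in \eqref{def:L2} along the triple is finite, so the pointwise estimate is meaningful; this follows from (A2), since $-H(\eta(s),p)$ dominates as $|p|\to\infty$ while the linear term $-p\cdot v(s)$ and the nonpositive reflection term $h(\eta(s))l(s)|p|$ cannot push the supremum to $+\infty$.)

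The main obstacle is precisely this last step: the reflection term $h(y)\,l\,|p|$ in the modified Lagrangian \eqref{def:L2}, together with the a priori unbounded control $l$, must be absorbed. What makes this possible is (i) the fact, extracted from the argument in Lemma \ref{lem:vl}, that the trajectory moves tangentially to $\partial\Omega$ whenever it touches it, so that one may add a normal component $-\sqrt{a_0}\,K\,\nu(\eta(s))$ to the test momentum without altering $-p\cdot\dot\eta(s)$, and (ii) the strict bound $|\cos\theta|\le c_0<1$ from (A6), which keeps $a_0$ finite. (The analogous argument for the oblique derivative problem is easier because the modified Lagrangian \eqref{def:L1} contains the simpler linear term $-g(y)l$ rather than $-h(y)l|p|$.)
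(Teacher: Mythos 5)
Your proof is correct, and the lower bound is obtained by a genuinely different route from the paper's. The paper proves $V(x,t)\ge u_0(x)-Ct$ by building an explicit classical subsolution: it approximates $u_0$ by a $C^1$ function $u_0^r$, subtracts a truncation $\zeta_r(\tau\psi)$ of a large multiple of the defining function $\psi$ of $\Omega$ so that the steep inward gradient forces the capillary inequality $\nu\cdot D\phi\le h|D\phi|$ (this is where $\sup_{\partial\Omega}|h|<1$ enters), invokes Lemma \ref{lem:value-1} to put this subsolution below $V$, and lets $r\to0$; the upper bound via the constant triple $(x,0,0)\in\SP(x)$ is identical in both arguments. You instead estimate the Lagrangian directly along an arbitrary Skorokhod triple, proving the pointwise bound $L(\eta(s),-v(s),-l(s))\ge \Lip(u_0)\,|\dot\eta(s)|-C$ by testing the supremum in \eqref{def:L2} at a momentum with an added inward normal component $-\sqrt{a_0}\,K\nu(\eta(s))$; the tangentiality $\dot\eta(s)\cdot\nu(\eta(s))=0$ at boundary times (extracted, as you say, from the computation in Lemma \ref{lem:vl}) keeps the transport term unchanged, and the choice $a_0=c_0^2/(1-c_0^2)$ with $c_0=\sup|h|<1$ absorbs the reflection term $h\,l\,|p|$ for every $l\ge0$. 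This is in effect a dual version of the paper's trick (steep normal component in the test momentum rather than in the test function), but it bypasses Lemma \ref{lem:value-1}, the smoothing of $u_0$ and the $r\to0$ limit, at the price of handling the trajectories and the a.e.\ structure of \eqref{eq:Sk} by hand; the paper's route, by contrast, recycles machinery it needs anyway for Theorem \ref{thm:value}. Two small remarks: at boundary times with $\dot\eta(s)=0$ your formula for $p$ is undefined, but $p=0$ gives the needed bound there exactly as in the interior case, so this is only a missing line; and the parenthetical claim that $L$ is finite along the triple is not needed for the lower bound (the pointwise inequality and the integral bound are valid with $L=+\infty$), and coercivity (A2) alone would not in fact guarantee finiteness of the Legendre-type supremum, so it is best simply dropped.
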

\begin{proof}

For each $r>0$, take $u^r_0\in C^1(\overline{\Omega})$ such that 
$\|u^r_0-u_0\|_{W^{1,\infty}(\Omega)}\le r$. 
We choose a function $\psi\in C^1(\mathbb R^n)$ satisfying \eqref{func:psi}. 
Since $|h(x)|<1$ for all $\partial\Omega$, we can take $\tau>0$ large enough so that
\[\nu(x)\cdot \big(Du^r_0(x)-\tau D\psi(x)\big)=\nu(x)\cdot Du^r_0(x)-\tau|D\psi(x)|\le h(x)|Du_0^r-\tau D\psi(x)|
\]
for all $x\in\partial\Omega$. 
It is easy to see that $\tau$ is independent of $r$. Next, approximating 
the function
$s\mapsto \max\{-r,\min\{r,s\}\}$ for $s\in\mathbb R$ 
by a smooth function, we build a function $\zeta_r\in C^1(\mathbb R)$ so that $|\zeta_r(s)|\le r$, $0<\zeta_r'(s)\le 1$ for all $s\in\R$, 
and $\zeta'_r(0)=1$.
Take a constant $C>0$ which is independent of $r$ such that 
\[
H\left(x,Du^r_0(x)-\zeta_r'(\tau\psi(x))\tau D\psi(x)\right)\leq C\quad \textrm{for all}\quad x\in \Omega.
\]
Set 
$\phi(x,t):=-2r+u^r_0(x)-\zeta_r\big(\tau\psi(x)\big)-Ct$. 
Then, 
\begin{align*}
&D\phi(x,t)=Du^r_0(x)-\zeta_r'(\tau\psi(x))\tau D\psi(x), \\
&\zeta_r'(\tau\psi(x))\tau D\psi(x)=\tau D\psi(x)\quad \textrm{for}\quad x\in\partial\Omega,
\end{align*}
since $\psi(x)=0$ for $x\in\partial\Omega$, 
which implies that $\phi$ is a classical subsolution to \eqref{eq:CN1}--\eqref{eq:CN2}.  
Noting that 
$\phi(x,0)=-2r+u^r_0(x)-\zeta_r\big(\tau\psi(x)\big)\le u_0(x)$, 
by Lemma \ref{lem:value-1} we obtain 
\[
V(x,t)\ge  -2r+u^r_0(x)-\zeta_r\big(\tau\psi(x)\big)-Ct.
\]
We then let $r\to 0$ to get 
\begin{equation}\label{ineq1:lem:value-2}
V(x,t)\ge  u_0(x)-C t \quad \text{for all} \ 
(x,t)\in\overline{\Omega}\times[0,\infty).
\end{equation} 

Finally, noting that $(x,0,0)\in \SP(x)$, by the definition of $V(x,t)$, 
we obtain 
\begin{equation}\label{ineq2:lem:value-2}
V(x,t)
\le L(x,0,0)t+u_0(x)
\le -\min_{x\in\overline{\Omega}}\min_{p\in\R^n}H(x,p)t+u_0(x). 
\end{equation}
Combing \eqref{ineq1:lem:value-2} with \eqref{ineq2:lem:value-2}, 
we obtain the conclusion. 
\end{proof}

\begin{lem}\label{lem:value-3}
Let $R>0$. There are constants $C>0$ and $0<\tau<1$, depending only on $R$, $H$ and $h$, such that for any $(x,p,v,l)\in \overline{\Omega}\times B_R(0)\times\mathbb R^n\times \mathbb (0,\infty)$, if
\begin{equation}\label{<1}
  H(x,p)+L(x,-v,-l)\le 1-v\cdot p+h(x)l|p|,
\end{equation}
then we have $|v|\le C+\tau l$.
\end{lem}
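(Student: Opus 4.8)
The plan is to read the hypothesis \eqref{<1} as the assertion that $p$ is a near-maximizer of the function whose supremum defines $L(x,-v,-l)$ through \eqref{def:L2}, and then to test that supremum against a one-step perturbation of $p$. Set
\[
F(q):=-q\cdot v-H(x,q)+h(x)l|q|\qquad(q\in\R^n),
\]
so that, by \eqref{def:L2}, $L(x,-v,-l)=\sup_{q\in\R^n}F(q)$ while $F(p)=-p\cdot v-H(x,p)+h(x)l|p|$. Then \eqref{<1} is precisely $H(x,p)+\sup_{q}F(q)\le 1+H(x,p)+F(p)$, i.e. $\sup_{q}F(q)\le F(p)+1$; in particular $F(q)\le F(p)+1$ for every $q\in\R^n$.

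If $v=0$ there is nothing to prove, so I assume $v\neq0$ and apply the inequality $F(q)\le F(p)+1$ with $q:=p-\frac{v}{|v|}$ --- a choice dictated by wanting $(p-q)\cdot v=|v|$ while keeping $|p-q|$ of order one. The term $-p\cdot v$ cancels on both sides, and what remains is
\[
|v|\le H\!\Big(x,p-\tfrac{v}{|v|}\Big)-H(x,p)+h(x)\,l\,\Big(|p|-\big|p-\tfrac{v}{|v|}\big|\Big)+1 .
\]

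Next I bound the right-hand side termwise. Since $|p|\le R$, both arguments of $H$ above lie in the closed ball $\overline{B_{R+1}(0)}$; by continuity of $H$ and its $\Z^n$-periodicity in the first variable (A1), the constant $M_R:=\sup\{|H(y,q)|:y\in\R^n,\ |q|\le R+1\}$ is finite and depends only on $R$ and $H$, whence $H(x,p-\frac{v}{|v|})-H(x,p)\le 2M_R$. For the boundary term, $\big||p|-|p-\frac{v}{|v|}|\big|\le\big|\frac{v}{|v|}\big|=1$ and $l>0$, so $h(x)\,l\,\big(|p|-|p-\frac{v}{|v|}|\big)\le h_0\,l$ with $h_0:=\|h\|_{L^\infty}$. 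Altogether $|v|\le(2M_R+1)+h_0\,l$.

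Finally, $h_0<1$: by (A6), $h=\cos\theta$ with $\theta$ continuous, periodic and valued in $[\tfrac\pi2,\pi)$, so on a fundamental domain $\theta$ attains a maximum strictly below $\pi$ and $h_0=-\cos(\max\theta)<1$. Thus the claim holds with $C:=2M_R+1$ and $\tau:=\tfrac12(1+h_0)\in(0,1)$, both depending only on $R$, $H$ and $h$. The one point that genuinely matters is this last one: the perturbation argument unavoidably produces the coefficient $\|h\|_{L^\infty}$ in front of $l$, so it is essential that the prescribed contact angle stay uniformly bounded away from $\pi$ (assumption (A6)); without that one would only get $\tau=1$, which is useless for the absorbing estimates this lemma is meant to feed later --- for instance when combined with $l\le|v|$ from Lemma~\ref{lem:vl} in the case $\gamma=\nu$.
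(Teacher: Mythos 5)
Your proof is correct, and it takes a genuinely different (though related) route from the paper's. The paper lower-bounds $L(x,-v,-l)$ by restricting the supremum in \eqref{def:L2} to a large ball $B_{\overline R}(0)$, giving $L(x,-v,-l)\ge \overline R|v|-C_1-\|h\|_{L^\infty}\overline R\,l$, and compares this with the crude upper bound $1+R|v|+\|h\|_{L^\infty}R\,l$ for the right-hand side of \eqref{<1}; the coefficient of $l$ comes out as $\tau=\|h\|_{L^\infty}\tfrac{\overline R+R}{\overline R-R}$, made smaller than $1$ by choosing $\overline R$ large. You instead read \eqref{<1} as saying that $p$ is a $1$-near-maximizer of $q\mapsto -q\cdot v-H(x,q)+h(x)l|q|$ and test against the unit perturbation $q=p-\tfrac{v}{|v|}$, so that the linear term $-p\cdot v$ cancels exactly; then $R$ enters only through $\sup\{|H(y,q)|:\ |q|\le R+1\}$, and the coefficient of $l$ is exactly $\|h\|_{L^\infty}$, which is slightly sharper and dispenses with the auxiliary radius $\overline R$. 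Both arguments hinge on $\|h\|_{L^\infty}<1$; you justify this explicitly via periodicity and compactness (the paper asserts it from $|h|<1$ on $\partial\Omega$), and your choice $\tau=\tfrac12(1+\|h\|_{L^\infty})$ correctly covers the case $h\equiv 0$, where the lemma still asks for $\tau>0$. The only cosmetic difference in the conclusion is that your $\tau$ is independent of $R$, whereas the paper's depends on $R$ through $\overline R$; both satisfy the statement.
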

\begin{proof}
Since $|h(x)|<1$ for all $x\in\partial\Omega$, 
we can take $\overline R>R>0$ large enough so that
\begin{equation}\label{tau}
  \tau:=\|h\|_{L^\infty(\R^n)}
  \frac{\overline R+R}{\overline R-R}<1.
\end{equation}
We choose a constant $C_1=C_1(\overline R)>0$ so that $C_1\ge  \max\limits_{(x,p)\in\overline\Omega\times \overline{B}_{\ol R}(0)}|H(x,p)|$. 
By definition,
\begin{align*}
L(x,-v,-l)&\ge  \max_{p\in B_{\ol R}(0)}(-v\cdot p-H(x,p)+h(x)l|p|)
\\ &\ge  \max_{p\in B_{\ol R}(0)}(-v\cdot p-C_1-\|h\|_{L^\infty(\R^n)}\overline R l)=\overline R|v|-C_1-\|h\|_{L^\infty(\R^n)}\overline R l.
\end{align*}
Now, let $(x,p,v,l)\in \overline\Omega\times B_R(0)\times\mathbb R^n\times (0,\infty)$ be a point such that \eqref{<1} holds. Then,
\[-C_1+\overline R|v|-C_1-\|h\|_{L^\infty(\R^n)} \overline Rl\le H(x,p)+L(x,-v,-l)\le 1+R|v|+\|h\|_{L^\infty(\R^n)} Rl,\]
which implies that
\[
|v|\le \frac{1+2C_1}{\overline R-R}+\tau l.
\]
\end{proof}

The following lemma is crucial for the proof of Theorem \ref{thm:value}. Although the proof is inspired by \cite[Lemma 5.5]{I11}, it differs fundamentally due to the additional term $h(x)l(s)|D\phi(\eta(s),t-s)|$ introduced by the prescribed contact angle boundary condition. To handle this term, we employ an iterative argument. The convergence of this iteration relies critically on the constant $\tau<1$ obtained in Lemma \ref{lem:value-3}.
\begin{lem}\label{lem:value-4}
Let $t>0$, $x\in\overline{\Omega}$, $\phi\in C^1(\overline{\Omega}\times[0,\infty))$ with $\|D\phi\|_{L^\infty(\ol\Omega\times[0,\infty))}<\infty$, and $\delta\in (0,1)$. Then, there exists $(\eta,v,l)\in\SP(x)$ such that 
\begin{multline*}
H(\eta(s),D\phi(\eta(s),t-s))-h(x)l(s)|D\phi(\eta(s),t-s)|+L(\eta(s),-v(s),-l(s))
\\ \le \delta-v(s)\cdot D\phi(\eta(s),t-s)
\end{multline*}
for \textit{a.e.} $s\in(0,t)$
\end{lem}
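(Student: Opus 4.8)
The plan is to build the triple $(\eta,v,l)$ directly, for the prescribed $\delta$, by a time-discretization in which the control on each short subinterval is chosen from a suitable subdifferential, the conclusion then following once the mesh is small enough. Write $p(s):=D\phi(\eta(s),t-s)$, and recall that by (A6) we have $h=\cos\theta\in(-1,0]$ on $\partial\Omega$, so that for every $y$ and $\lambda\ge0$ the function $q\mapsto\Psi_{y,\lambda}(q):=H(y,q)-h(y)\lambda|q|$ is convex, and by (A1), (A3) it is locally Lipschitz with a modulus uniform in $y$. The key elementary fact is the Fenchel identity: if $v$ is chosen so that the supremum defining $L_C(y,-v,-\lambda)=\sup_q\{-q\cdot v-\Psi_{y,\lambda}(q)\}$ is attained at $q=p$, equivalently $-v\in\partial_q\Psi_{y,\lambda}(p)$, then
\[
H(y,p)-h(y)\lambda|p|+L_C(y,-v,-\lambda)+v\cdot p=0 .
\]
Hence the asserted inequality will hold, with $\delta$ absorbing the errors made in freezing the coefficients, as soon as the running control $v(s)$ is chosen compatibly with the reflection rate $l(s)$ produced by the Skorokhod problem \eqref{eq:Sk}.

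First I would fix $\delta\in(0,1]$ and work on $[0,t]$. A uniform bound $\|v\|_{L^\infty}\le R$, with $R$ depending only on $t$, $\phi$, $H$ and $h$, will hold for every control we produce — either a posteriori from Lemmas~\ref{lem:value-3} and \ref{lem:vl}, or directly from the uniform local Lipschitz property of $H(y,\cdot)$ — so by Lemma~\ref{lem:vl} the corresponding curve $\eta$ is uniformly Lipschitz. I would then partition $[0,t]$ into short subintervals, refined at the instants when $\eta$ reaches or leaves $\partial\Omega$, so that on each subinterval $\eta$ stays, up to an error governed by the mesh, either in $\Omega$ (where $l\equiv 0$) or on $\partial\Omega$ (where $l$ is essentially constant). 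On a subinterval issuing from $y_k:=\eta(t_k)$, with $p_k:=D\phi(y_k,t-t_k)$ and $n_k:=p_k/|p_k|$, I choose the constant control $v_k$ as follows: if $y_k\in\Omega$, take $-v_k\in\partial_qH(y_k,\cdot)(p_k)$ and $\lambda_k=0$; if $y_k\in\partial\Omega$, take $-v_k\in\partial_q\Psi_{y_k,\lambda_k}(p_k)=\partial_qH(y_k,\cdot)(p_k)-h(y_k)\lambda_k n_k$, coupled with $\lambda_k=(\nu(y_k)\cdot v_k)^+$. This last relation is implicit, of the form $\lambda_k=(a+b\lambda_k)^+$ with $|b|=|h(y_k)\,\nu(y_k)\cdot n_k|<1$, and has a unique solution by a one-dimensional monotonicity argument; here (A6) and the choice $\gamma=\nu$ are essential. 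Solving the Skorokhod problem with this piecewise-constant drift (Proposition~\ref{lem:exist-Sk}) and concatenating the pieces — legitimate since the conditions in \eqref{eq:Sk} are pointwise in time — yields some $(\eta,v,l)\in\SP(x)$.

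It remains to check the inequality for a.e.\ $s\in(0,t)$. On each subinterval the quantities $\eta(s)$, $p(s)$, $\nu(\eta(s))$, $l(s)$ differ from the frozen values $y_k$, $p_k$, $\nu(y_k)$, $\lambda_k$ by amounts that tend to $0$ with the mesh, uniformly over the subintervals: for $\eta$ and $p$ from $|\dot\eta|\le 2R$ and the $C^1$-regularity of $\phi$; for $\nu$ from the uniform continuity of the $C^1$ normal field; and for $l$ from standard estimates for the Skorokhod dynamics with constant drift in the two regimes, which may also be read off from Proposition~\ref{prop:stability-Sk}. Since $L_C$ is continuous on bounded sets — the supremum in \eqref{def:L2} being attained over a bounded $p$-set by virtue of (A2) — the left-hand side of the asserted inequality is within $o(1)$, as the mesh shrinks, of the value $0$ given by the Fenchel identity; taking the mesh small enough makes it at most $\delta$.

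The step I expect to be the main obstacle is the coupling between the control and the reflection rate. In the oblique case $B=B_N$ the contributions of $l$ coming from $L_N$, from the boundary cost $g(y)l$ and from the reflection term cancel, so the control can be chosen freely in $\partial_qH$; here the optimal control must lie in $\partial_q\Psi_{\cdot,l}(\cdot)$, which involves $l$, while $l$ is in turn dictated by the Skorokhod problem driven by that very control. Reconciling the two is what forces both the partition adapted to the boundary excursions of $\eta$ and the implicit fixed-point choice of $\lambda_k$ on each subinterval, and it is also the place where the strict inequality $|h|<1$ from (A6) is used.
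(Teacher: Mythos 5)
Your plan founders on the step you yourself flag as the main obstacle: the coupling between the chosen drift and the reflection rate actually produced by the Skorokhod problem. The inequality must hold for a.e.\ $s$ with the \emph{actual} $l(s)$ of the constructed triple, and the Fenchel identity $H(y,p)-h(y)\lambda|p|+L_C(y,-v,-\lambda)+v\cdot p=0$ is exact only for the frozen value $\lambda=\lambda_k$ used to pick $-v_k\in\partial_q\Psi_{y_k,\lambda_k}(p_k)$; for any other $l$ one only has $F(l):=H(y,p)-h(y)l|p|+L_C(y,-v_k,-l)+v_k\cdot p\ge 0$, with no smallness. Your argument therefore needs $l(s)$ to stay uniformly within $o(1)$ of $\lambda_k$ on each subinterval, and this is not justified: $l$ is merely an $L^1$ function, the boundary-contact set $\{s:\eta(s)\in\partial\Omega\}$ of a Skorokhod trajectory need not be a finite union of intervals (so the partition ``refined at the instants when $\eta$ reaches or leaves $\partial\Omega$'' is both circular — $\eta$ is only defined after the controls are chosen — and in general not available), and whenever the trajectory dips into $\Omega$ one has $l(s)=0$ while $\lambda_k>0$, producing an error of size comparable to $\lambda_k$, not to the mesh. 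No quantitative stability for the Skorokhod map is at your disposal to exclude this: the paper notes that even uniqueness of $(\eta,l)$ is unknown, and Proposition~\ref{prop:stability-Sk} yields only weak-$\ast$ convergence of $l\,ds$, which cannot give the uniform pointwise closeness your error estimate requires. One could try to argue that in such grazing situations $\lambda_k$ itself must be small, but that would require exactly the kind of pointwise control of the reflection term that is missing; as written this is a genuine gap, not a technicality.

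For contrast, the paper resolves the coupling globally in time rather than on a mesh: it iterates, choosing $v_{k+1}(s)$ to nearly attain the supremum in $L_C$ with the \emph{previous} reflection rate $l_k(s)$, then solving the Skorokhod problem to obtain $l_{k+1}$; the bound $|v_{k+1}|\le C+\tau l_k$ with $\tau<1$ from Lemma~\ref{lem:value-3} (this is where $|h|<1$ enters) keeps the sequence bounded, and the limit is taken via Proposition~\ref{prop:stability-Sk}, Mazur's lemma to upgrade weak-$\ast$ to a.e.\ convergence of convex combinations, and the joint convexity of $(v,l)\mapsto L_C(x,-v,-l)$ to preserve the near-optimality inequality. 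If you want to keep a constructive flavor, you would need to replace your uniform closeness claim by an argument of this compactness/convexity type; the pointwise-in-time fixed point on a mesh does not close.
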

\begin{proof}
Because we consider $\gamma=\nu$ in \eqref{eq:Sk}, we can take $\rho=1$ in \eqref{vell} to get $l(s)\le |v(s)|$ for all $(\eta,v,l)\in \SP(x)$. In the following, we construct $(\eta,v,l)$ by induction.

For the first step, we take $v_1\equiv0$, $l_1\equiv 0$, and $\eta_1\equiv x$. It is clear that $(\eta_1,v_1,l_1)\in\SP(x)$. Next, for any $\delta>0$, and $s\in (0,t)$, we take $v_2(s)$ pointwisely such that
\[
H(\eta_1(s),D\phi(\eta_1(s),t-s))+L(\eta_1(s),-v_2(s),0)\le \delta-v_2(s)\cdot D\phi(\eta_1(s),t-s).
\]
Since $\delta<1$, we set 
$R:=\|D\phi\|_{L^\infty(\ol\Omega\times[0,\infty))}<\infty$ in Lemma \ref{lem:value-3} to get $|v_2(s)|\le C$ and $l_2(s)\le |v_2(s)|\le C$ 
for all $s\in[0, \infty)$ and some $C>0$ by Lemma \ref{lem:vl}. That is, $v_2\in L^\infty([0,t],\R^n)$. By Proposition \ref{lem:exist-Sk}, there is $(\eta_2,v_2,l_2)\in \SP(x)$.

Now, assume we have $(\eta_k,v_k,l_k)\in \SP(x)$ for $k\in\N$.
 For each $s\in (0,t)$, we take $v_{k+1}(s)$ pointwisely such that
\begin{align*}
H(\eta_k(s),D\phi&(\eta_k(s),t-s))+L(\eta_k(s),-v_{k+1}(s),-l_k(s))
\\ &\le \delta-v_{k+1}(s)\cdot D\phi(\eta_k(s),t-s)+h(\eta_k(s))l_k(s)|D\phi(\eta_k(s),t-s)|.
\end{align*}
By  Lemma \ref{lem:value-3} once again, 
we obtain $|v_{k+1}(s)|\le C+\tau l_k(s)$ for all $s\in[0,\infty)$. 
Note that $l_k\le |v_k|$. 
By induction hypothesis, we obtain 
\[
|v_{k+1}(s)|\le C(1+\tau^2+\dots+\tau^{k-1})\le \frac{C}{1-\tau},
\]
since $\tau<1$, that is, $v_{k+1}\in L^\infty([0,t],\R^n)$. We then take $(\eta_{k+1},v_{k+1},l_{k+1})\in \SP(x)$ by Proposition \ref{lem:exist-Sk}. By Proposition \ref{prop:stability-Sk}, there is a subsequence $\{(\eta_k,v_k,l_k)\}_{k\in\N}$ which is denoted again by the same symbol and $(\eta,v,l)\in\SP(x)$ such that 
\begin{align*}
&\eta_k\to \eta \quad \text{uniformly on} \quad[0,t], \\
&v_k\, ds\to v\, ds \quad \text{weakly-}*  \ \text{in} \ C([0,t],\mathbb R^n)^*, \\
&l_k\, ds\to l\, ds \quad \text{weakly-}*  \ \text{in} \ C([0,t],\R)^*
\end{align*} 
as $k\to\infty$. 
Note that $\{v_k\}_{k\in \N}$ is uniformly bounded. Thus, we see that the pair $(v_{k}, l_k)$ weakly converges to $(v, l)$ in $L^2([0,t],\mathbb R^n)\times L^2([0,t],\R)$.

By Mazur's lemma, we take $\lambda_k=(\lambda_{k,1},\dots,\lambda_{k,N_k})$ of nonnegative numbers such that
\[\sum_{j=1}^{N_k}\lambda_{k,j}=1\]
and
\[
(\overline v_k,\overline  l_k):=\sum_{j=1}^{N_k}\lambda_{k,j}(v_{k+j+1}, l_{k+j})\to(v, l)\ \textrm{strongly in}\ L^2([0,t],\mathbb R^n)\times L^2([0,t],\R).
\]
Then we take a subsequence of $(\eta_k,v_k, l_k)$ such that $(\overline v_k,\overline l_k)\to (v, l)$ for a.e. $s\in (0,t)$. Since $\eta_k\to \eta$ uniformly, for all $\sigma>1$, one can take $k$ large enough such that
\begin{equation}\label{etae}
\begin{aligned}
H(\eta(s),D\phi&(\eta(s),t-s))+L(\eta(s),-v_{k+1}(s),- l_k(s))
\\ &\le \sigma\delta-v_{k+1}(s)\cdot D\phi(\eta(s),t-s)+h(\eta(s)) l_k(s)|D\phi(\eta(s),t-s)|.
\end{aligned}
\end{equation}
Noting that $(v,l)\mapsto L(x,-v,- l)$ is convex for all $x$, by \eqref{etae} we obtain 
\begin{align*}
&H(\eta(s),D\phi(\eta(s),t-s))+L(\eta(s),-\overline v_{k}(s),-\overline l_k(s))
\\ &=H(\eta(s),D\phi(\eta(s),t-s))+L(\eta(s),-\sum_{j=1}^{N_k}\lambda_{k,j}v_{k+j+1}(s),-\sum_{j=1}^{N_k}\lambda_{k,j} l_{k+j}(s))
\\ &\le H(\eta(s),D\phi(\eta(s),t-s))+\sum_{j=1}^{N_k}\lambda_{k,j}L(\eta(s),-v_{k+j+1}(s),- l_{k+j}(s))
\\ &\le \sigma\delta-\sum_{j=1}^{N_k}\lambda_{k,j}v_{k+j+1}(s)\cdot D\phi(\eta(s),t-s)+h(\eta(s))\sum_{j=1}^{N_k}\lambda_{k,j} l_{k+j}(s)|D\phi(\eta(s),t-s)|
\\ &
=\sigma\delta-\overline v_k(s)\cdot D\phi(\eta(s),t-s)+h(\eta(s))\overline  l_k(s)|D\phi(\eta(s),t-s)|. 
\end{align*}
Letting $k\to\infty$ and noting that $\sigma>1$ is arbitrary, we get
\begin{align*}
&H(\eta(s),D\phi(\eta(s),t-s))+L(\eta(s),-v(s),- l(s))
\\ &\le \delta-v(s)\cdot D\phi(\eta(s),t-s)+h(\eta(s)) l(s)|D\phi(\eta(s),t-s)|
\end{align*}
for  $a.e.$ $s\in [0,t]$.
\end{proof}

\begin{prop}[Dynamic programming principle]\label{prop:DPP}
Let $(x,t)\in\overline{\Omega}\times[0,\infty)$, and let $\tau:\SP(x)\to[0,t]$ 
be a non-anticipating map, that is, 
whenever $\alpha(s)=\beta(s)$ for $a.e.$ $s\in(0,\tau(\alpha))$, 
$\tau(\alpha)=\tau(\beta)$. 
Then, we have 
\begin{align*}
V(x,t)
=\inf\bigg\{\int_0^{\tau(\alpha)}
L_{\C}(\eta(s),-v(s),-l(s))\,ds+V(\eta(\tau(\alpha))&, t-\tau(\alpha))
\\ & \mid 
\alpha=(\eta,v,l)\in\SP(x)\bigg\}. 
\end{align*}
\end{prop}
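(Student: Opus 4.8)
The plan is to prove the two inequalities "$\le$" and "$\ge$" separately, using the concatenation structure of solutions to the Skorokhod problem. For the "$\le$" direction, I would fix $\alpha=(\eta,v,l)\in\SP(x)$, set $\tau=\tau(\alpha)$, and let $(\xi,w,m)\in\SP(\eta(\tau))$ be nearly optimal for $V(\eta(\tau),t-\tau)$. I then concatenate: define $\tilde\eta(s)=\eta(s)$, $\tilde v(s)=v(s)$, $\tilde l(s)=l(s)$ on $[0,\tau]$ and $\tilde\eta(s)=\xi(s-\tau)$, $\tilde v(s)=w(s-\tau)$, $\tilde l(s)=m(s-\tau)$ on $[\tau,t]$. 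One checks directly from \eqref{eq:Sk} that $\tilde\alpha:=(\tilde\eta,\tilde v,\tilde l)\in\SP(x)$: the constraint $\tilde\eta(s)\in\overline\Omega_\ep$, the ODE $\dot{\tilde\eta}+\tilde l\gamma(\tilde\eta/\ep)=\tilde v$, the sign condition $\tilde l\ge 0$, and the complementarity condition $\tilde l=0$ on $\{\tilde\eta\in\Omega_\ep\}$ all hold piecewise and are preserved by concatenation (the gluing point is a single instant, hence negligible for a.e.\ conditions). Then
\[
V(x,t)\le\int_0^t L_C\!\left(\tilde\eta(s),-\tilde v(s),-\tilde l(s)\right)ds+u_0(\tilde\eta(t))
=\int_0^\tau L_C(\eta,-v,-l)\,ds+\left[\int_0^{t-\tau}L_C(\xi,-w,-m)\,ds+u_0(\xi(t-\tau))\right],
\]
and the bracket is within $\delta$ of $V(\eta(\tau),t-\tau)$; letting $\delta\to 0$ and taking the infimum over $\alpha$ gives "$\le$".

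For the "$\ge$" direction, I would take an arbitrary $\beta=(\eta,v,l)\in\SP(x)$ that is $\delta$-optimal for $V(x,t)$, let $\tau=\tau(\beta)$, and split the integral at $\tau$:
\[
V(x,t)+\delta\ge\int_0^t L_C(\eta,-v,-l)\,ds+u_0(\eta(t))
=\int_0^\tau L_C(\eta,-v,-l)\,ds+\left[\int_0^{t-\tau}L_C(\eta(\cdot+\tau),-v(\cdot+\tau),-l(\cdot+\tau))\,ds+u_0(\eta(t))\right].
\]
The shifted triple $(\eta(\cdot+\tau),v(\cdot+\tau),l(\cdot+\tau))$ solves the Skorokhod problem \eqref{eq:Sk} starting at $\eta(\tau)$ (this is immediate: restricting/time-translating a solution preserves all five conditions), so the bracket is $\ge V(\eta(\tau),t-\tau)$. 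Hence $V(x,t)+\delta$ is bounded below by the right-hand infimum evaluated along $\beta$, and a fortiori by the infimum over all of $\SP(x)$; let $\delta\to 0$.

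The one genuine subtlety — the main obstacle — is the role of the \emph{non-anticipating} hypothesis on $\tau$. In the "$\ge$" argument this is not needed (any single $\beta$ is handled directly), but in the "$\le$" argument one must make sure that after replacing $\beta$ on $[0,\tau(\beta)]$... more precisely, one must verify that $\tau(\tilde\alpha)=\tau(\alpha)$ so that the splitting point is consistent; this is exactly where non-anticipation is used, since $\tilde\alpha$ and $\alpha$ agree on $[0,\tau(\alpha))$. I would state this verification explicitly. A second minor point worth a remark: since uniqueness for \eqref{eq:Sk} is not known, the infimum in the definition of $V$ is over the (possibly multivalued) family $\SP(x)$, but this causes no difficulty — both concatenation and restriction produce \emph{some} admissible element of the relevant $\SP$, which is all that the infimum requires. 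Finiteness of all quantities follows from Lemma \ref{lem:value-2} together with the bound $l\le|v|$ from Lemma \ref{lem:vl} and the coercivity of $H$, which keeps $L_C$ bounded below along the relevant controls.
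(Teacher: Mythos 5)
Your proposal is correct, and it is exactly the standard concatenation (for ``$\le$'') and restriction/time-shift (for ``$\ge$'') argument that the paper has in mind when it omits the proof as a ``direct check as usual''. One minor remark: as you have set up the two inequalities, the non-anticipation hypothesis is in fact never needed --- in the ``$\le$'' step the splitting time is the number $\tau(\alpha)$ attached to the given $\alpha$, and $\tau$ is never applied to the concatenated triple $\tilde\alpha$ (which is only fed into the definition of $V(x,t)$ as an infimum over $\SP(x)$) --- so your verification that $\tau(\tilde\alpha)=\tau(\alpha)$ is harmless but superfluous; the hypothesis matters only for the way the proposition is later applied to hitting times.
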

Since we can check the dynamic programming principle directly as usual, 
we omit to give a proof.

\bigskip
\begin{proof}[Proof of Theorem {\rm\ref{thm:value} (ii)}]
By Lemma \ref{lem:value-2}, $|V(x,t)|<\infty$, and 
$\lim\limits_{t\to 0}V(x,t)=u_0(x)$. 
Let $V^\ast$ be the upper semi-continuous envelop of, that is, 
$V^\ast(x,t):=\lim_{r\to0}\{V(y,s)\mid |x-y|+|t-s|<r\}$. 
We first prove that $V^\ast$ is a subsolution to \eqref{eq:CN1}--\eqref{eq:CN2}. 

We argue by contradiction, and we suppose that there exists a test function $\phi\in C^1(\overline{\Omega}\times[0,\infty))$ satisfying  
$(V^*-\phi)(\hat x,\hat t)=0$ so that $V^*-\phi$ attains a strict maximum at 
$(\hat x,\hat t)\in\overline{\Omega}\times(0,\infty)$, and  
\begin{align*}
&\phi_t(\hat x,\hat t)+H(\hat{x},D\phi(\hat{x},\hat{t}))>0, 
&& \text{if} \ \hat{x}\in\Omega,  \\
&
\phi_t(\hat x,\hat t)+H(\hat{x},D\phi(\hat{x},\hat{t}))>0, \ \text{and} \  
B_{\C}(\hat{x},D\phi(\hat{x},\hat{t}))>0
&& \text{if} \ \hat{x}\in\partial\Omega. 
\end{align*}
We only consider the case where $\hat{x}\in\partial\Omega$. 
The case where $\hat x\in\Omega$ is easier to prove. We may assume that $\|D\phi\|_{L^\infty(\ol\Omega\times[0,\infty))}<\infty$ by modification, since we only analyze near $(\hat x,\hat t)$.

We choose $\delta>0$ small enough so that $\hat t>2\delta$, and 
\begin{equation}\label{nsub}
\left\{
\begin{array}{l}
\phi_t(x,t)+H(x,D\phi(x,t))\ge  2\delta, \\
\nu(x)\cdot D\phi(x,t)-h(x)|D\phi(x,t)|\ge  2\delta
\end{array}
\right. 
\end{equation}
for $(x,t)\in (\overline\Omega\cap B_{2\delta}(\hat x))\times [\hat t-2\delta,\hat t+2\delta]$. 
 Set
\begin{align*}
&\partial Q:=\big(\partial B_{2\delta}(\hat x)\times[\hat t-2\delta,\hat t+2\delta]\cup B_{2\delta}(\hat x)\times\{\hat t-2\delta\}\big)\cap\big(\overline{\Omega}\times[0,\infty)\big),\\
&m:=-\max_{\partial Q}(V^*-\phi).
\end{align*}
Note that since $(\hat x,\hat t)$ is a strict maximum point of $V^\ast-\phi$, $m>0$ and
$V^\ast(x,t)\le \phi(x,t)-m$ for all $(x,t)\in \partial Q$. 
By the definition of the upper semi-continuous envelope, we can choose a point $(\overline x,\overline t)\in\big(\overline\Omega\cap B_{\delta}(\hat x)\big)\times[\hat t-\delta,\hat t+\delta]$ so that
\[
(V^\ast-\phi)(\overline x,\overline t)>-\min\{\delta^2,m\}. 
\]

By Lemma \ref{lem:value-4}, we can find $\alpha:=(\eta,v, l)\in \SP(\overline x)$ such that for a.e. $s\ge  0$,
\begin{equation}\label{ve}
\begin{aligned}
L(\eta(s),-v(s),- l(s))+H(\eta(s),D\phi(\eta(s),\overline t-s))-&h(x) l(s)|D\phi(\eta(s),\overline t-s)|
\\ &\le \delta-v(s)\cdot D\phi(\eta(s),\overline t-s).
\end{aligned}
\end{equation}
Note that
\[
\sigma:=\overline t-(\hat t-2\delta)\ge  \delta,\quad 
\text{and} \quad
\dist(\overline x,\partial B_{2\delta}(\hat x))\ge  \delta.
\]
We set
\[
S:=\{s\in[0,\sigma]\mid  \eta(s)\in\partial B_{2\delta}(\hat x)\},
\]

Case 1. $S=\emptyset$. 
It is easy to see that $\alpha\mapsto \sigma$ is non-anticipating. 
By the dynamic programming principle, Proposition \ref{prop:DPP}, we have
\begin{align*}
\phi(\overline x,\overline t)
&<V^*(\overline x,\overline t)+\delta^2
\\ &
\le \int_0^{\sigma} L_{\C}(\eta(s),-v(s),- l(s))\,ds
+V^*(\eta(\sigma),\overline t-\sigma)+\delta^2
\\ &\le \int_0^{\sigma} 
\big[L_{\C}(\eta(s),-v(s),- l(s))+\delta\big]\, ds+\phi(\eta(\sigma),\overline t-\sigma),
\end{align*}
where for the last inequality we used $\sigma\ge \delta$. Since $S=\emptyset$, \eqref{nsub} always holds. Therefore, using \eqref{nsub} and \eqref{ve} we have
\begin{align*}
0&<\int_0^{\sigma} 
\Big[L_{\C}(\eta(s),-v(s),- l(s))+\delta+\frac{d}{ds}\phi(\eta(s),\overline t-s)\Big]\,ds
\\&\le 
\int_0^{\sigma} 
\Big[L_{\C}(\eta(s),-v(s),- l(s))+\delta+D\phi(\eta(s),\overline t-s)\cdot\dot{\eta}(s)-\phi_t(\eta(s),\overline t-s)\Big]ds
\\&\le \int_0^{\sigma} 
\Big[L_{\C}(\eta(s),-v(s),- l(s))+\delta
\\ &\quad 
\quad+D\phi(\eta(s),\overline t-s)\cdot\big(v(s)- l(s)\nu(\eta(s))\big)-\phi_t(\eta(s),\overline t-s)\Big]\,ds
\\&\le \int_0^{\sigma} \Big[2\delta-H(\eta(s),D\phi(\eta(s),\overline t-s))+h(\eta(s)) l(s)|D\phi(\eta(s),\overline t-s)|
\\ &\quad \quad
- l(s)D\phi(\eta(s),\overline t-s)\cdot\nu(\eta(s)))-\phi_t(\eta(s),\overline t-s)\Big]\,ds
\\ &\le \int_0^{\sigma}  l(s)\big[h(\eta(s))|D\phi(\eta(s),s)|-D\phi(\eta(s),\overline t-s)\cdot\nu(\eta(s)))\big]\,ds\le 0,
\end{align*}
which is a contradiction.

Case 2. $S\not=\emptyset$. We define $\tau:=\inf S$, which is the first hitting time of $\eta(s)$ onto $\partial B_{2\delta}(\hat x)$. We use the dynamic programming principle again, and the fact that $\eta(\tau)\in \partial Q$, we get
\begin{align*}
\phi(\overline x,\overline t)
&<V^*(\overline x,\overline t)+m
\\ &
\le \int_0^{\tau} L_{\C}(\eta(s),-v(s),- l(s))\,ds
+V^*(\eta(\tau),\overline t-\tau)+m
\\ &\le \int_0^{\sigma} 
L_{\C}(\eta(s),-v(s),- l(s))\, ds+\phi(\eta(\tau),\overline t-\tau).
\end{align*}
Arguing as the first case, we get a contradiction.

\medskip
We next prove that $V_\ast$ is a supersolution to \eqref{eq:CN1}--\eqref{eq:CN2}, 
where we denote by $V_\ast$ the lower semi-continuous envelop of $V$. 
We argue by contradiction, and we suppose that there exists a test function $\phi\in C^1(\overline{\Omega}\times[0,\infty))$ satisfying  
$(V_*-\phi)(\hat x,\hat t)=0$ so that $V_*-\phi$ attains a strict minimum at 
$(\hat x,\hat t)\in\overline{\Omega}\times(0,\infty)$, and  
\begin{align*}
&\phi_t(\hat x,\hat t)+H(\hat{x},D\phi(\hat{x},\hat{t}))<0, 
&& \text{if} \ \hat{x}\in\Omega,  \\
&
\phi_t(\hat x,\hat t)+H(\hat{x},D\phi(\hat{x},\hat{t}))<0, \ \text{and} \  
B_{\C}(\hat{x},D\phi(\hat{x},\hat{t}))<0 
&& \text{if} \ \hat{x}\in\partial\Omega. 
\end{align*}
We only consider the case where $\hat{x}\in\partial\Omega$. 
Take $0<2\delta<\hat{t}$ so that setting 
 $Q:=B_{2\delta}(\hat x)\times[\hat t-2\delta,\hat t+2\delta]$, 
we have
\[
\phi_t(x,t)+H(x,D\phi(x,t))\le 0\quad \textrm{and}\quad \nu(x)\cdot D\phi(x,t)-h(x)|D\phi(x,t)|\le 0
\]
for $(x,t)\in Q\cap (\overline{\Omega}\times[0,\infty))$.  
Set $m:=\min_{(\overline{\Omega}\times[0,\infty))\cap\partial Q}(V_*-\phi)>0$.
We may choose a point $(\overline x,\overline t)\in \overline{\Omega}\times[0,\infty)$ 
so that 
$(V_*-\phi)(\overline x,\overline t)<m$, $|\overline x-\hat x|<\delta$ 
and $|\overline t-\hat t|<\delta$. 
We take $(\eta,v, l)\in \SP(\overline x)$ so that
\[
V(\overline x,\overline t)+m>\int_0^{\overline t}L(\eta(s),-v(s),- l(s))ds+u_0(\eta(\overline t))
.\]
We set
\[
\tau=\min\{s\ge  0: (\eta(s),\overline t-s)\in\partial Q\}.
\]
It is clear that $\tau>0$, $\eta(s)\in Q\cap(\overline\Omega\times[0,\infty))$  for $s\in[0,\tau)$ and if $|\eta(\tau)-\hat x|<2\delta$, then by definition, $\tau=\overline t-(\hat t-2\delta)>\delta$. We have
\begin{align*}
\phi(\overline x,\overline t)+m&>\int_0^\tau L(\eta(s),-v(s),- l(s))\, ds+V(\eta(\tau),\overline t-\tau)
\\&\ge  \int_0^\tau L(\eta(s),-v(s),- l(s))\, ds+\phi(\eta(\tau),\overline t-\tau)+m. 
\end{align*}
Thus, 
\begin{align*}
0&>\int_0^\tau \Big[L(\eta(s),-v(s),- l(s))+\frac{d}{ds}\phi(\eta(s),\overline t-s)\Big]\, ds
\\&\ge  \int_0^\tau \Big[-v(s)\cdot D\phi(\eta(s),\overline t-s)-H(\eta(s),D\phi(\eta(s),\overline t-s))+h(\eta(s)) l(s)|D\phi(\eta(s),s)|
\\ &\quad +\dot\eta(s)\cdot D\phi(\eta(s),\overline t-s)\cdot\nu(\eta(s)))-\phi_t(\eta(s),\overline t-s)\Big]\, ds
\\ &\ge\int_0^\tau-l(s)\Big(\nu(\eta(s))\cdot D\phi(\eta(s),\bar t-s)-h(\eta(s))|D\phi(\eta(s),\bar t-s)|\Big)\, ds>0,
\end{align*}
which yields a contradiction.

By the comparison principle, Theorem \ref{thm:comp}, 
we obtain $V^*\le V_*$, which implies that $V(x,t)$ is continuous on $\overline{\Omega}\times[0,\infty)$. 
\end{proof}

\subsection{An example for the level set equation}\label{sec:example}

In this subsection, we provide a simple example of the level set equation to understand the value function given by \eqref{func:VC} with $\ep=1$. 
Consider
\begin{equation}\label{lse}
\begin{cases}
u_t+|Du|=0\quad \text{in}\quad \Omega\times(0,\infty),
\\ Du\cdot \nu=\cos\theta|Du| \quad \text{on}\quad \partial\Omega\times(0,\infty),
\\ u(x,0)=u_0(x) \quad\text{on} \ \overline{\Omega}, 
\end{cases}
\end{equation}
where $\Omega=\mathbb R^n\backslash B(0,1)$, 
$\theta\in [\frac{\pi}{2},\pi)$ 
is a given constant, and $u_0(x)=x_1+2$ for $x=(x_1,x_2)\in\mathbb R^2$.

Since the associated Lagrangian is given by
\begin{equation*}
L_{\C}(x,-v,-l)=\sup_{p\in\R^n}\Big\{p\cdot v-|p|+l|p|\cos\theta\Big\}=
\begin{cases}
0\ &\text{if}\ |v|\leq 1-l\cos\theta,
\\ \infty\ &\text{otherwise},
\end{cases}
\end{equation*}
the solution of \eqref{lse} is given by
\[u(x,t)=\inf\Big\{u_0(\eta(t))\mid (\eta,v,l)\in\SP(x)\ \text{with}\ |v(s)|\leq 1-l(s)\cos\theta \ \text{for}\ s\in[0,t]\Big\}.
\]
For a later purpose, we define the cost functional by 
\[
J[\eta](x,t):=u_0(\eta(t))
\]
for $(\eta,v,l)\in\SP(x)$. 
By using the representation formula, we study the behavior of the 0-level set \[\Gamma_t:=\{x\in\R^2\mid u(x,t)=0\}\] in details. First, we notice that since $u_0$ is strictly increasing on $x_1$ and independent of $x_2$, the optimal trajectories of $u(x,t)$ should be the paths whose $\eta(t)$ is the smallest under the constraint.

We first consider the case where $\theta=\frac{\pi}{2}$. It is clear that the initial level set is $\{x_1=-2\}$, which is on the left of $B(0,1)$. When $\eta$ is in the interior of $\Omega$, we have $l=0$. Then the optimal trajectory $\eta^*$ among all paths $(\eta,v,l)\in\SP(x)$ with $\eta(s)\in\Omega$ for almost every $s\in[0,t]$ is just the straight lines $\eta^*(s)=x+s(-1,0)$. When $\eta$ is along the boundary $\partial \Omega$, we may have $l>0$. This case may cause difference. However, we have $|v|=\sqrt{|\dot\eta|^2+l^2}\leq 1$. To minimizing $\eta(t)$, we should take $l=0$ on the boundary. Therefore, the solution of \eqref{lse} coincides with that for the corresponding state constraint problem, since we always take $l=0$. For $t\in(0,2]$, the optimal trajectory is the straight lines $\eta^*(s)=x+s(-1,0)$. 
Note that the level set consists of two rays and an arc, and does not satisfy the contact angle condition in the classical sense. For $t>2$, if
\[
(x_1,x_2)\in\bar\Omega\cap \{(x_1,x_2)\in\R^n\mid x_1>0,\ 0\leq x_2\leq 1\},
\]
then we can intuitively see the optimal trajectory (see Figure \ref{Fig:Example01}). The optimal trajectory first reaches the boundary along a line tangent to it, then moves along the boundary, and finally leaves the boundary at the point $(0,1)$ and then proceeds straight to the left. Let us consider the polar coordinates $(r,\phi)$ defined by
\[r=\sqrt{x_1^2+x_2^2},\quad \phi=\arctan \frac{x_2}{x_1}.\]

\begin{figure}[htb]
\centering
\includegraphics[width=15cm]{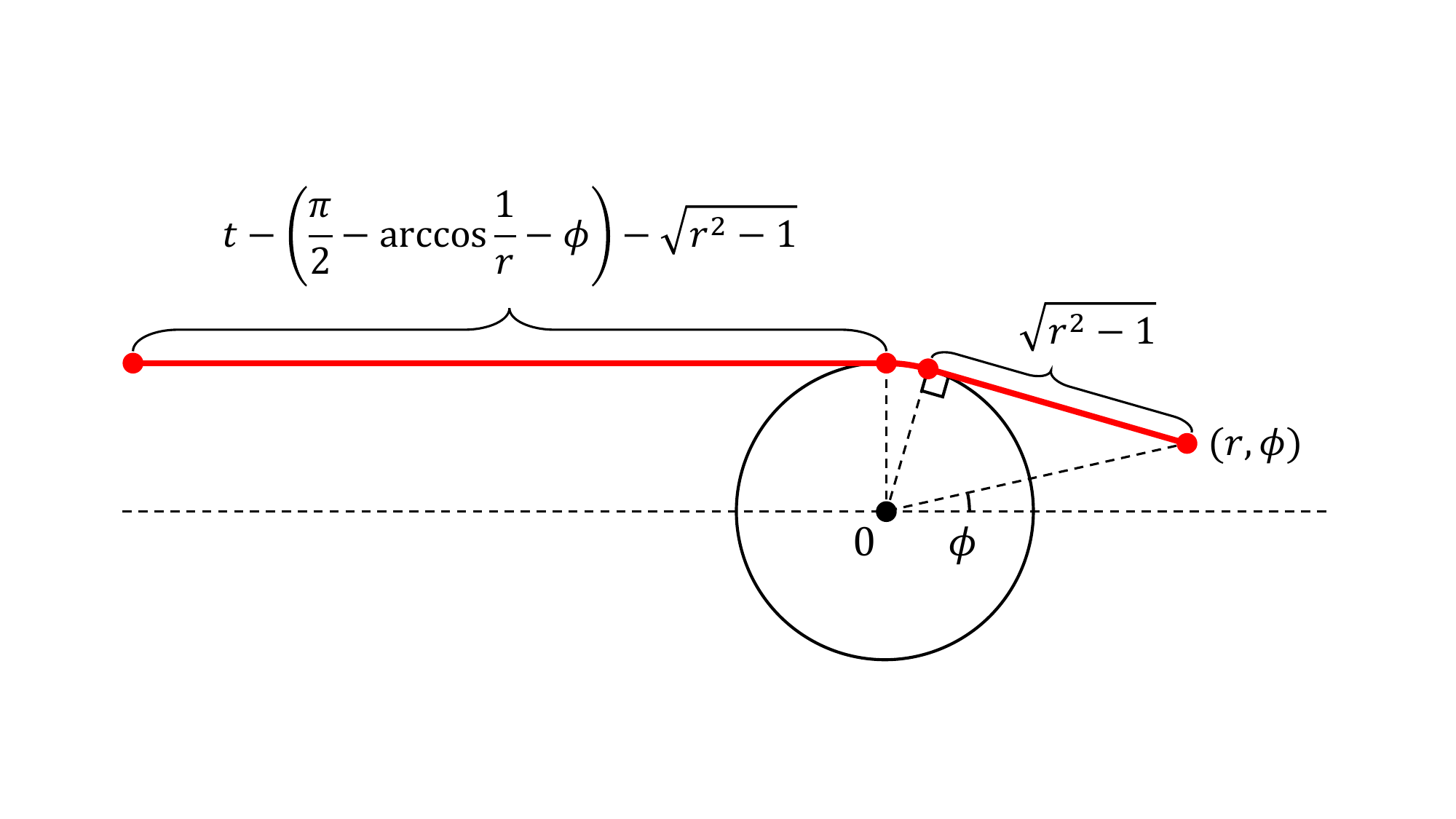}
\caption{Image picture of the optimal trajectory} 
\label{Fig:Example01}
\end{figure}


Therefore, we obtain 
\begin{align*}
&\eta^*_1(t)=-t+\Big(\frac{\pi}{2}-\arccos\frac{1}{r}-\phi\Big)+\sqrt{r^2-1}, \\
&
u((x_1,x_2),t)=\eta^*_1(t)+2. 
\end{align*}
By symmetry, $\Gamma_t$ leaves the boundary at the point $(1,0)$. To obtain the time $t_0$ when $\Gamma_t$ leaves the boundary, we set $r=1$ and $\phi=0$, then
\[u=-t_0+\frac{\pi}{2}+2=0,\]
which implies $t_0=\frac{\pi}{2}+2$. Thus, $\Gamma_t$ touches the boundary when $t\in [1,t_0]$, and satisfies the boundary condition in the classical sense when $t\in (2,t_0]$. To check that $\Gamma_t$ satisfies the boundary condition in the classical sense when $t\in (2,t_0]$, we proceed as follows. According to $u=0$, we get the equation describing $\Gamma_t$ in the coordinates $(r,\phi)$ as
\[\phi(r)=2-t+\frac{\pi}{2}-\arccos\frac{1}{r}+\sqrt{r^2-1}.\]
Calculating directly,
\[\frac{d\phi(r)}{dr}=\frac{\sqrt{r^2-1}}{r},\]
which is equal to zero when $r=1$. This means that $\Gamma_t$ is perpendicular to the boundary. See the picture of $0$-level set of $u$ in Figure \ref{Fig:Example02}. 
We also refer to \cite{CM} for a similar example. 

Interestingly, the behavior of $\Gamma_t$, as described mathematically above, closely resembles the Orowan mechanism in dislocation dynamics (see \cite{M}). For a discussion on the connection between Hamilton--Jacobi equations and dislocation dynamics, see \cite{CM,MNT} and the references therein.

\begin{figure}[htb]
\centering
\includegraphics[width=15cm]{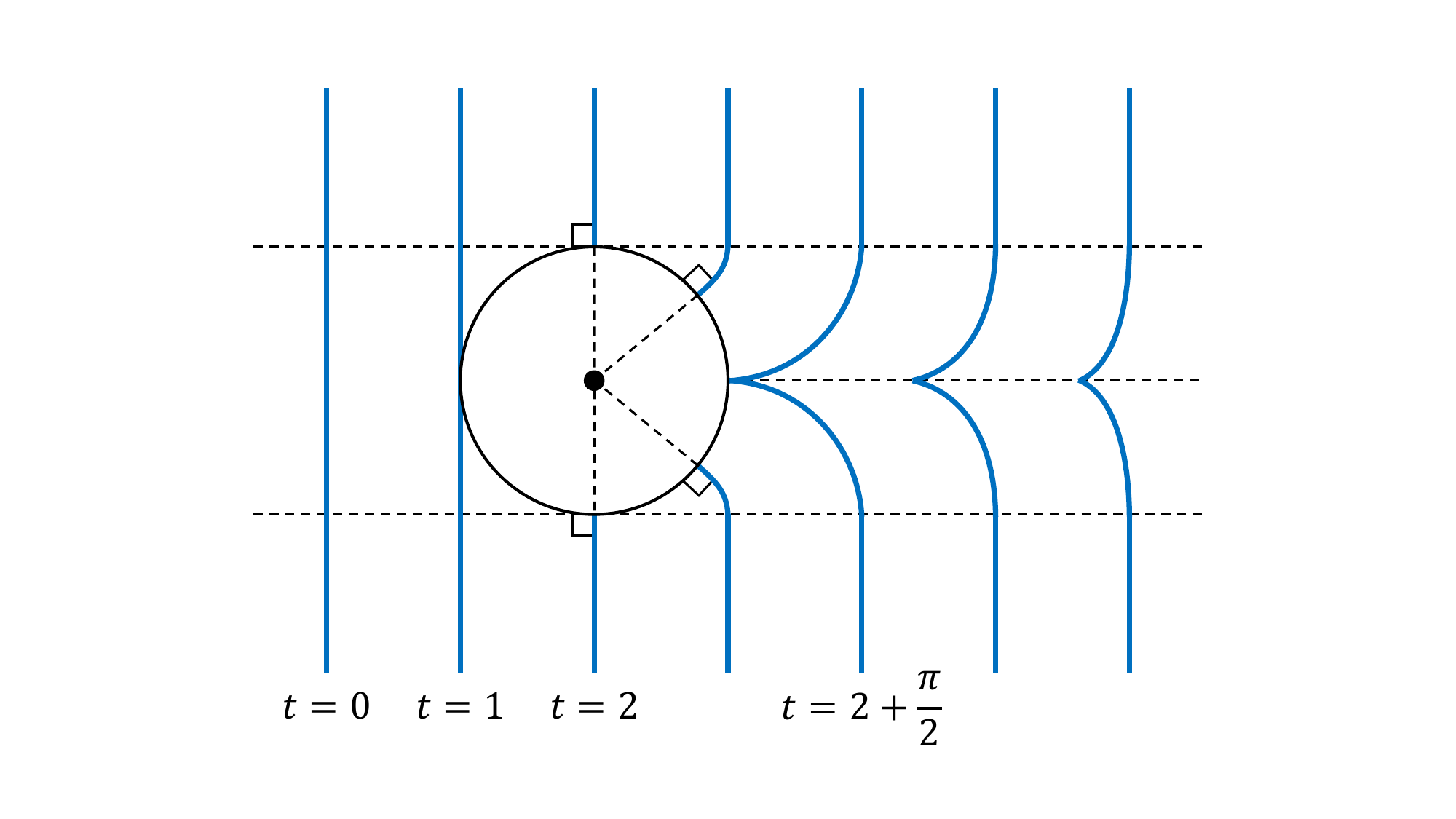}
\caption{Front propagation of $0$-level set of $u$} 
\label{Fig:Example02}
\end{figure}

\medskip
We next consider general $\theta\in(\frac{\pi}{2},\pi)$. Because of the constraint $|v(s)|\le 1-l(s)\cos\theta$ for $s\in[0,t]$, we can speed up on the boundary since $\cos\theta<0$. Therefore, we should take $l>0$ on the boundary, and then the solution of \eqref{lse} will be different from the one for the corresponding state constraint problem. 
More precisely, we take $(\eta(s),v(s),l(s))\in \SP(x)$ satisfying
\[|v(s)|=1-l(s)\cos\theta, 
\quad\text{and}\quad
\dot{\eta}(s)\perp\nu((\eta(s))
\]
to minimize $\eta_1(t)$. 
Since we always have $\dot{\eta}+l\nu=v$, we  have $|v|=\sqrt{|\dot\eta|^2+l^2}$. Therefore, 
\[
|\dot{\eta}|^2=|v|^2-l^2=(1-l\cos\theta)^2-l^2=-l^2\sin^2\theta-2l\cos\theta+1, 
\]
which implies the optimal reflection
\begin{equation}\label{l-star}
l^*(s)=-\frac{\cos\theta}{\sin^2\theta}.
\end{equation}

As a representative case, we study $\Gamma_t$ when $\theta=\frac{3\pi}{4}$ in more details here. We first find an optimal trajectory $\eta^\ast$ of $u((0,1),t)$ for $t\in(1,t_1)$, where $t_1>0$ will be fixed later. As calculated in \eqref{l-star}, we have $|\dot\eta^*(s)|=l^*(s)=\sqrt{2}$ along the boundary. Therefore, the optimal trajectory should first move along the boundary, and then goes straight to the left in parallel to the $x_1$-axis with $|\dot\eta^*(s)|=1$. This is because the trajectory can move faster along the boundary due to the reflection effect. Furthermore, we need to find an optimal place where $\eta^\ast$ should leave the boundary. To do so, for $\alpha\in[0,\frac{\pi}{2}]$, we set 
\[
\eta_\alpha^\ast(s):=
\left\{
\begin{array}{ll}
\begin{pmatrix}
\cos\sqrt{2}s  & -\sin\sqrt{2}s\\
\sin\sqrt{2}s & \cos\sqrt{2}s
\end{pmatrix} 
\begin{pmatrix}
0\\
1
\end{pmatrix}
& \text{for} \ s\in[0,\frac{\alpha}{\sqrt{2}}]\\
(-\sin\alpha,\cos\alpha)+s(-1,0) 
& \text{for} \ s\in[\frac{\alpha}{\sqrt{2}},t]. 
\end{array}
\right. 
\]
Then, 
\[
J[\eta_\alpha^\ast]((0,1),t)=-t+\frac{\alpha}{\sqrt{2}}-\sin\alpha+2. 
\]
Optimizing $J[\eta_\alpha^\ast]((0,1),t)$ on $\alpha$ to obtain $u((0,1),t)$, we find $\alpha=\frac{\pi}{4}$. 
From this consideration, we see that when a trajectory moves along the boundary, the optimal trajectory should leave at $(x_1,x_2)=(-\frac{1}{\sqrt{2}}, \frac{1}{\sqrt{2}})$. 
Therefore, it is clear now what is the optimal trajectory $\eta^\ast$ if the initial position starts at $x\in\Omega_L:=\{(x_1,x_2)\in\Omega\mid x_1\le0\}$. 
Thus, we can see that $\Gamma_t$ keeps straight lines until they arrive at $(-\frac{1}{\sqrt{2}}, \frac{1}{\sqrt{2}})$ and $(-\frac{1}{\sqrt{2}}, -\frac{1}{\sqrt{2}})$, that is, $t=t_1:=2-\frac{1}{\sqrt{2}}$ after $\Gamma_t$ first hits the boundary.

We next consider the case where the initial position starts at $x=(r,\phi)\in\Omega_R:=\{(x_1,x_2)\in\Omega\mid x_1\ge0\}$, where we use the polar coordinate $(r,\phi)$ once again. We only need the case where $\phi\in[0,\frac{\pi}{2}]$ by symmetry. For $\beta\in[0,\frac{\pi}{2}-\phi]$, we denote by $\eta_\beta^\ast$ the trajectory which moves from $(r,\phi)$ to $(1,\phi+\beta)$ with a straight line satisfying $|\dot{\eta}^\ast_\beta(s)|=1$, and from $(1,\phi+\beta)$ to $(1,\frac{3\pi}{4})$ along the boundary with $|\dot{\eta}^\ast_\beta(s)|=\sqrt{2}$, and goes straight in parallel to the $x_1$-axis with $|\dot{\eta}^\ast_\beta(s)|=1$ (see Figure \ref{Fig:Example03}). 
Noting that the length between $(r,\phi)$ and $(1+\phi+\beta)$ is given by 
$\sqrt{1+r^2-2r\cos\beta}$, we have 
\[
J[\eta^\ast_\beta]((r,\phi),t)=-t+\frac{3\pi}{4\sqrt{2}}-\frac{\alpha}{\sqrt{2}}-\frac{\phi}{\sqrt{2}}+\sqrt{1+r^2-2r\cos\alpha}-\frac{1}{\sqrt{2}}+2.
\]

\begin{figure}[htb]
\centering
\includegraphics[width=13cm]{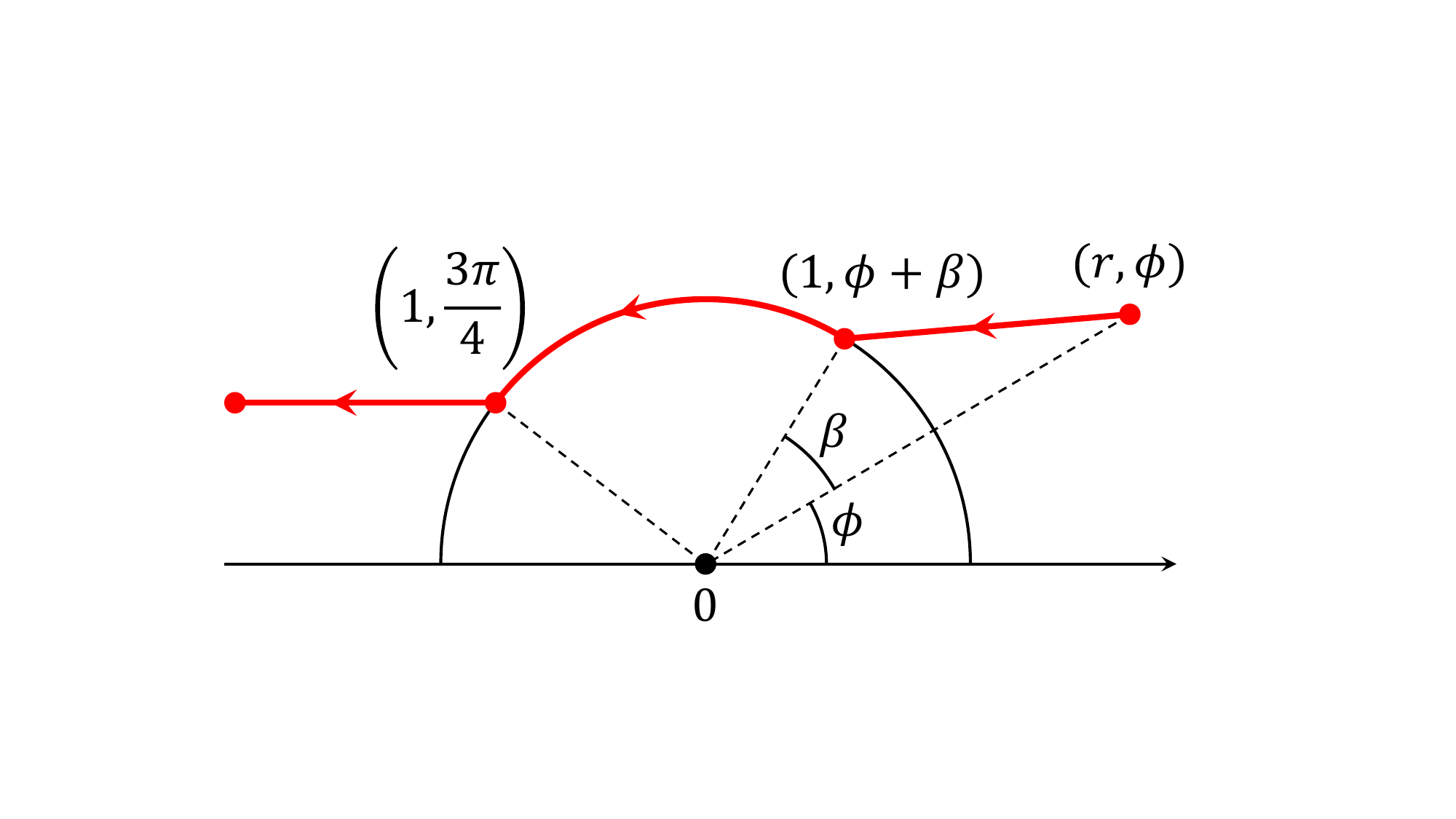}
\caption{Image picture of the optimal trajectory} 
\label{Fig:Example03}
\end{figure}

To minimize $J[\eta^\ast_\beta]((r,\phi),t)$ with respect to $\beta$, we differentiate the above quantity with respect to $\beta$, and finally conclude the minimum point $\beta$ satisfying
\[\cos\beta=\frac{1+\sqrt{2r^2-1}}{2r}.\]
Therefore, we conclude
\begin{equation}\label{u(r,phi)}
u((r,\phi),t)=-t+\frac{3\pi}{4\sqrt{2}}-\frac{1}{\sqrt{2}}\arccos\frac{1+\sqrt{2r^2-1}}{2r}-\frac{\phi}{\sqrt{2}}+\sqrt{r^2-\sqrt{2r^2-1}}-\frac{1}{\sqrt{2}}+2.
\end{equation}
Therefore, we can write down the equation for $\Gamma_t=\{u=0\}$ in $(r,\phi)$-coordinate
\[\phi(r)=2\sqrt{2}+\pi-2-\sqrt{2}t-\arccos\frac{1}{\sqrt{2}r}+\sqrt{2r^2-1},\]
where we used the fact that
\[\arccos\frac{1+\sqrt{2r^2-1}}{2r}=\arccos\frac{1}{\sqrt{2}r}-\frac{\pi}{4}\quad \text{and}\quad \sqrt{2(r^2-\sqrt{2r^2-1})}=\sqrt{2r^2-1}-1.\]
Calculating directly, we get
\[\frac{d\phi(r)}{dr}=\frac{\sqrt{2r^2-1}}{r},\]
which is equal to $1$ when $r=1$. This shows that $\Gamma_t$ meets the boundary with a contact angle $\frac{3\pi}{4}$, which coincides with the boundary condition. By symmetry, $\Gamma_t$ leaves the boundary at the point $(x_1,x_2)=(1,0)$. To obtain the time $t_2$ when $\Gamma_t$ leaves the boundary, we set $r=1$ and $\phi=0$, then $t_2=2+\frac{3\pi}{4\sqrt{2}}-\frac{1}{\sqrt{2}}$.

It remains to describe when $\Gamma_t$ changes from flat segments to a curved part. For such a point $(x_1,x_2)=(r,\phi)$, there are two optimal trajectories. One is the trajectory with the velocity $(-1,0)$, the other is the optimal trajectory that first reaches the boundary, move along the boundary, and then goes straight to the left. The first optimal trajectory gives
\[u((r,\phi),t)=-t+r\cos\phi+2.\]
Combining with \eqref{u(r,phi)}, we get the equation for which the changing points should satisfy
\begin{equation}\label{change}
\arccos\frac{1}{\sqrt{2}r}-\sqrt{2r^2-1}+\phi+\sqrt{2}r\cos\phi=\pi-2.
\end{equation}
If $r\to \infty$, since the right hand side of \eqref{change} is finite, and $\sqrt{2r^2-1}\sim\sqrt{2}r$, it follows that $\cos\phi\to 1$, that is, $\phi\to 0$. Since $r\to \infty$ and $\phi\to0$, it follows that
\[\arccos\frac{1}{\sqrt{2}r}\to 0,\quad \sqrt{2r^2-1}\sim \sqrt{2}r,\quad \sqrt{2}r\cos\phi\sim \sqrt{2}r\Big(1-\frac{\phi^2}{2}\Big).\]
We conclude that $r\phi^2$ is bounded. Thus, for the changing point $(x_1,x_2)=(r,\phi)$, we have \[x_2=r\sin\phi\sim r\phi\sim \sqrt{r}\to \infty.\]
See Figure \ref{Fig:Example04} for the image of $\Gamma_t=\{u=0\}$.

\begin{figure}[htb]
\centering
\includegraphics[width=15cm]{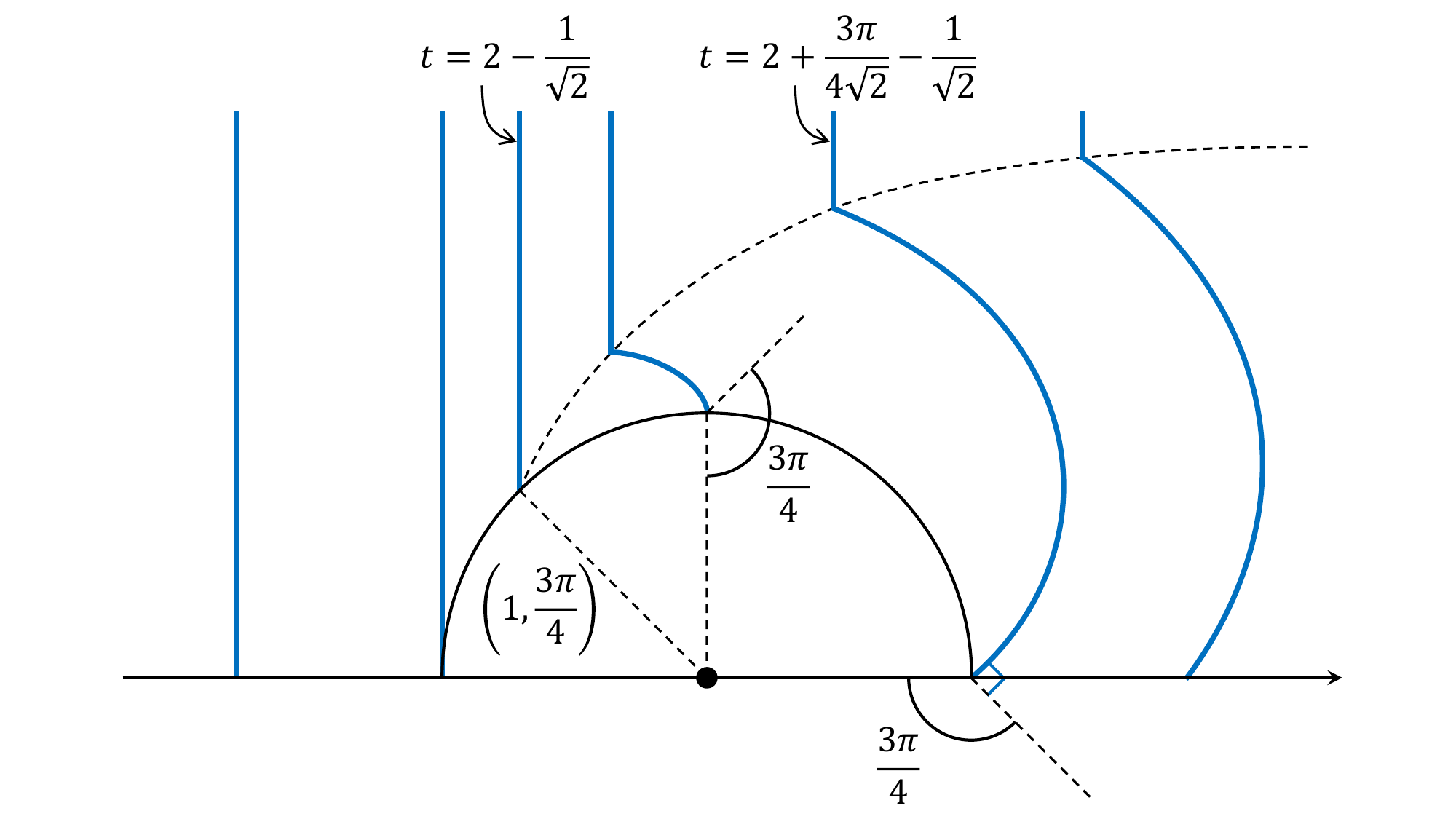}
\caption{Front propagation of $0$-level set of $u$} 
\label{Fig:Example04}
\end{figure}

\begin{rem}
We point out an observation for $\theta=\pi$. Let $l^\ast$ be given by \eqref{l-star}. 
Clearly, $l^\ast(s)$ is increasing with respect to $\theta\in[\frac{\pi}{2},\pi)$, 
and $l^\ast(s)\to\infty$ as $\theta\to\pi$. 
This somehow shows a difficulty if we consider the case where $\theta=\pi$. At this time, $|\dot\eta^*(s)|=\csc\theta$. When $\theta\to \pi$, $|\dot\eta^*|\to\infty$. In this case, for the optimal trajectory $\eta^*$ of $u(x,t)$ with $x=(0,1)$, we have $\eta^*_1(t)\to -1-t$ as $\theta\to\pi$. One can see this since trajectories along the boundary $\partial \Omega$ can have a very large speed. Then they will spend a very short time along $\partial\Omega$. Thus, the optimal trajectory just quickly reaches the point $(-1,0)$ on $\partial\Omega$, and then goes directly left with $\dot\eta^*=(-1,0)$. Then we know $u(x,t)\to 1-t$ as $\theta\to \pi$, where we recall $x=(0,1)$. Here we raise a natural question: what happens when $\theta\to\pi$ for general case, which might be interesting to consider for future works.
\end{rem}

\section{Basic properties of value functions}\label{sec:bs}

In this section, we give several basic properties of the value functions 
$V_{\NN}^\ep$ and $V_{\C}^\ep$ including the Lipschitz estimate, the existence of extremal curves, and some estimates of extremal curves which are essential to obtain a rate of convergence of homogenization in Section \ref{sec:pthm}.

\begin{prop}[Lipschitz estimate]\label{prop:lip}
Assume that {\rm(A1)--(A4)} hold. 
\begin{enumerate}
\item[{\rm(i)}]
Assume that {\rm(A5)} holds. 
Let $V_{\NN}^\ep$ be the function defined by \eqref{func:VN}. 
There is $C>0$ which is independent of $\varepsilon$ such that
\[
\|(V_{\NN}^\ep)_t\|_{\Li(\overline{\Omega}_\ep\times[0,\infty))}
+\|DV_{\NN}^\ep\|_{\Li(\overline{\Omega}_\ep\times[0,\infty))}\leq C.
\]
\item[{\rm(ii)}]
Assume that {\rm(A6)} holds. 
Let $V_{\C}^\ep$ be the function defined by \eqref{func:VC}. 
There is $C>0$ which is independent of $\varepsilon$ such that
\[
\|(V_{\C}^\ep)_t\|_{\Li(\overline{\Omega}_\ep\times[0,\infty))}
+\|DV_{\C}^\ep\|_{\Li(\overline{\Omega}_\ep\times[0,\infty))}\leq C.
\]
\end{enumerate}
\end{prop}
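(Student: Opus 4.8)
The plan is to establish the Lipschitz bounds by working directly with the representation formulas \eqref{func:VN} and \eqref{func:VC}, exploiting that the modified Lagrangians $L_N$ and $L_C$ are, thanks to (A2), superlinear in the velocity-type variable $(q,l)$ after one accounts for the reflection. First I would record the easy a priori bounds: since $(x,0,0)\in\SP_\ep(x)$, plugging the trivial trajectory into the infimum gives an upper bound $V^\ep(x,t)\le L(x,0,0)\,t+u_0(x)$, and since $u_0$ is bounded and $L(\cdot,0,0)$ is bounded by (A1) and periodicity, together with the lower bound coming from comparing against a suitable classical subsolution (exactly as in Lemma \ref{lem:value-2}, which gives $|V^\ep(x,t)-u_0(x)|\le Ct$ with $C$ independent of $\ep$ after rescaling), we obtain $\|(V^\ep)_t\|_{\Li}\le C$ in the viscosity sense; more precisely the time-Lipschitz bound follows from the semigroup/dynamic programming property (Proposition \ref{prop:DPP} for the capillary case, and its analogue for $V_N^\ep$) combined with the bound $|V^\ep(x,s)-u_0(x)|\le Cs$.

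For the spatial Lipschitz estimate, the core step is: given $x,y\in\ol\Omega_\ep$ and $t>0$, take a near-optimal (or genuinely optimal, once Section \ref{sec:bs} supplies existence) triple $(\eta,v,l)\in\SP_\ep(x)$ for $V^\ep(x,t)$, and construct a competitor in $\SP_\ep(y)$ whose cost exceeds that of $(\eta,v,l)$ by at most $C|x-y|$; symmetrizing then yields $|V^\ep(x,t)-V^\ep(y,t)|\le C|x-y|$. Concretely I would first use the finite-speed / superlinearity structure: on any interval where $\eta$ stays in the interior the control $l$ vanishes and one can re-run the trajectory started at $y$ after inserting a short initial segment of controlled velocity that closes the gap $|x-y|$ (this costs $O(|x-y|)$ because $L(y,q,0)$ grows at most polynomially and the extra time/length needed is $O(|x-y|)$); near the boundary one uses Lemma \ref{lem:vl} to control $l$ by $|v|$, so the reflection term in $L_N$ (namely $-g(y)l$, with $g$ bounded) or in $L_C$ (the $-h(y)l|p|$ term, with $|h|<1$, handled via Lemma \ref{lem:value-3}) contributes only an $O(|x-y|)$ perturbation. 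The uniformity in $\ep$ comes from the rescaling $\Omega_\ep=\ep\Omega$: the modified Lagrangians evaluated at $\eta(s)/\ep$ only see the $\Z^n$-periodic data, so all constants ($\|g\|_{\Li}$, $\|h\|_{\Li}$, the coercivity modulus from (A2), the obliqueness constant $\rho$ from $\gamma\cdot\nu\ge\rho$, the geometric constants of $\partial\Omega$) are $\ep$-independent.

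The main obstacle I expect is the boundary-modification step when $x$ and $y$ are at different distances from $\partial\Omega_\ep$ (or when the segment joining the shifted trajectories would exit $\ol\Omega_\ep$): one cannot simply translate $\eta$ by $x-y$ because $\ol\Omega_\ep$ is only a perforated domain, so the competitor curve must be bent along the obstacle boundaries, and each such bending introduces reflection cost $l>0$ that must be shown to be $O(|x-y|)$. The clean way around this is to invoke the $C^1$ regularity of $\partial\Omega$ together with an interior-cone / uniform-exterior-sphere type condition (available since $\partial\Omega$ is $C^1$ and $\ep\Omega$ merely rescales it), and to use the stability of the Skorokhod problem (Proposition \ref{prop:stability-Sk}) to pass to a limit in an approximation where the competitor is built by a reflection map; alternatively, and perhaps more cleanly, one can bypass the trajectory surgery entirely by noting that $(x,t)\mapsto V^\ep(x,t)$ is the unique viscosity solution (Theorem \ref{thm:value}) and running a standard doubling-of-variables comparison argument against the translated data, which is how I would actually write it to keep the $\ep$-uniformity transparent. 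Either route gives $\|DV^\ep\|_{\Li}\le C$ with $C$ independent of $\ep$, completing the proof of both (i) and (ii).
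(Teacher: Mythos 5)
Your treatment of the time derivative is fine and is essentially the paper's argument: the bound $|V^\ep(x,t)-u_0(x)|\le Ct$ from Lemma \ref{lem:value-2} combined with the comparison principle (or, equivalently, the dynamic programming principle) gives $\|(V^\ep)_t\|_{\Li}\le C$ uniformly in $\ep$. The gap is in the spatial estimate. The route you say you would actually write up --- a doubling-of-variables comparison ``against the translated data'' --- does not work on this problem: $V^\ep(\cdot+h,\cdot)$ is not even defined on $\ol\Omega_\ep$ because the perforated domain is not translation invariant, and even setting that aside, the $x$-dependence of $H(\tfrac{x}{\ep},p)$ (and of $\gamma(\tfrac{x}{\ep})$, $\theta(\tfrac{x}{\ep})$) oscillates at scale $\ep$, so any modulus you pay in such a comparison is of order $|h|/\ep$ and the resulting constant blows up as $\ep\to0$. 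So the ``transparent $\ep$-uniformity'' claim for that alternative is exactly backwards. Your primary route (trajectory surgery) could in principle be made to work, but as sketched it has unresolved issues: you must keep the time horizon equal to $t$, so inserting a connecting segment of duration $\sim|x-y|$ forces a time-compression of the remaining trajectory whose extra cost has to be controlled (and handled separately when $t\lesssim|x-y|$); and you repeatedly lean on the speed bound $M_0$ for optimal triples and on the normalization \eqref{eq:K_0H}, but in the paper both Proposition \ref{prop:extremal-main} and the modification \eqref{eq:K_0H} come \emph{after} and rely on Proposition \ref{prop:lip}, so invoking them here is circular; you would have to redo those estimates with only (A2)--(A3) and integrated bounds on near-minimizers.

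What you are missing is the simple step the paper actually uses: once $\|(V^\ep)_t\|_{\Li}\le C$ is known, the equation itself gives the spatial bound. Indeed $V^\ep$ is a (sub)solution, so in the viscosity sense $H(\tfrac{x}{\ep},DV^\ep)\le -( V^\ep)_t\le C$, and the coercivity (A2) is uniform in the fast variable $y=\tfrac{x}{\ep}$; hence $|DV^\ep|\le C$ with $C$ independent of $\ep$, no trajectory surgery and no translation argument needed. This is the standard ``coercivity converts a time-Lipschitz bound into a space-Lipschitz bound'' mechanism, and it is precisely what makes the constant insensitive to the oscillation in $x$ and to the boundary condition. I recommend replacing your spatial argument by this one (with the minor remark that the interior gradient bound propagates to $\ol\Omega_\ep$ since the geodesic and Euclidean distances on $\ol\Omega_\ep$ are uniformly comparable by scaling).
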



The Lipschitz estimate for \eqref{eq:CN1}--\eqref{eq:CN3} is rather standard, but we give a proof for the sake of completeness. 

\begin{proof}
Let $C$ be the constant which is given by Lemma \ref{lem:value-2}. 
Fix $s>0$, and set $u_1(x,t):=V^\ep_{\C}(x,t+s)$ and $u_2(x,t):=V^\ep_{\C}(x,t)-Cs$ are solutions of \eqref{eq:CN1}--\eqref{eq:CN2}. 
By Lemma \ref{lem:value-2}, we have
\[
u_1(x,0)=V^\ep_{\C}(x,s)\ge  u_0(x)-Cs=u_2(x,0).
\]
By the comparison principle, Theorem \ref{thm:comp}, 
we obtain $V^\ep_{\C}(x,t+s)\ge V^\ep_{\C}(x,t)-Cs$, 
which implies that $(V_{\C}^\ep)_t\ge-C$. 
Similarly, we obtain $(V_{\C}^\ep)_t\le C$. 
Therefore, 
by the coercivity of $H$, we get $|DV_{\C}^\ep|\leq C$. 
\end{proof}

Based on the Lipschitz estimate, Proposition \ref{prop:lip}, 
we can modify $H(y, p)$ for $|p| > C$ without changing the solutions to \eqref{eq:CN1}--\eqref{eq:CN3} and ensure that, 
for all $(y,p) \in \R^n\times \mathbb{R}^n$, 
\begin{equation}\label{eq:K_0H}
    \frac{|p|^2}{2}-K_0 \leq H(y, p) \leq \frac{|p|^2}{2}+K_0
\end{equation}
for some constant $K_0 >0$ that depends only on $H$ and $\left\|Du_0\right\|_{L^\infty(\mathbb{R}^n)}$. 

\begin{prop}\label{prop:extremal-main}
Assume that {\rm(A1)--(A4)} hold. 
Let $(x,t)\in\overline{\Omega}_\ep\times(0,\infty)$. 
\begin{enumerate}
\item[{\rm(i)}]
Assume that {\rm(A5)} holds. 
There exists $(\eta^\ep,v^\ep,l^\ep)\in\SP_\ep(x)$ such that 
\[
V_{\NN}^\ep(x,t)
=
\int_0^{t}L_{\NN}\left(\frac{\eta^\ep(s)}{\ep}, -v^\ep(s),-l^\ep(s)\right)\,ds
+u_0(\eta^\ep(t)). 
\]
Moreover, there exists $M_0>0$ which is independent of $\ep$ 
such that
\[
\|\dot\eta^\ep\|_{\Li([0,t])}+
\|l^\ep\|_{\Li([0,t])}\leq M_0.
\] 
\item[{\rm(ii)}]
Assume that {\rm(A6)} holds. There exists $(\eta^\ep,v^\ep,l^\ep)\in\SP_\ep(x)$ such that 
\[
V_{\C}^\ep(x,t)
=
\int_0^{t}L_{\C}\left(\frac{\eta^\ep(s)}{\ep}, -v^\ep(s),-l^\ep(s)\right)\,ds
+u_0(\eta^\ep(t)). 
\]
Moreover, there exists $M_0>0$ which is independent of $\ep$ 
such that
\[
\|\dot\eta^\ep\|_{\Li([0,t])}+
\|l^\ep\|_{\Li([0,t])}\leq M_0.
\]
\end{enumerate}
\end{prop}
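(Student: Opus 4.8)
The plan is to prove both parts by the same scheme; by rescaling $\eta^\ep(s)\mapsto\eta^\ep(s)/\ep$ it suffices to treat $\ep=1$, so write $V$, $L$, $\SP(x)$. I would split the argument into two parts: first the existence of a minimizer, then the a priori bounds on $\dot\eta$ and $l$.

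\textbf{Step 1: existence of a minimizer.} Take a minimizing sequence $(\eta_k,v_k,l_k)\in\SP(x)$ for the infimum defining $V(x,t)$. Using the normalization \eqref{eq:K_0H}, which gives $L(y,q,l)\ge \frac{|q|^2}{2}-K_0-h(y)l\cdot 0$ in the oblique case and the analogous quadratic lower bound from the Legendre transform in the capillary case, together with Lemma \ref{lem:vl} (which bounds $l_k\le C|v_k|$), one sees that $\int_0^t|v_k|^2\,ds$ is bounded along the minimizing sequence: otherwise the running cost would tend to $+\infty$, contradicting minimality since the constant curve $(x,0,0)\in\SP(x)$ has finite cost by \eqref{ineq2:lem:value-2}. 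Hence $\{|v_k|\}$ is bounded in $L^2([0,t])$, so uniformly integrable, and Proposition \ref{prop:stability-Sk} yields a subsequence with $\eta_k\to\eta$ uniformly, $\dot\eta_k\,ds\to\dot\eta\,ds$, $v_k\,ds\to v\,ds$, $l_k\,ds\to l\,ds$ weakly-$*$, and $(\eta,v,l)\in\SP(x)$. Lower semicontinuity of $\int_0^tL(\eta(s),-v(s),-l(s))\,ds$ under this convergence follows from convexity of $(q,l)\mapsto L_N(y,q,l)$, resp. $(q,l)\mapsto L_C(y,q,l)$, and continuity in $y$, via a Mazur-lemma argument exactly as in the proof of Lemma \ref{lem:value-4}; combined with $u_0(\eta_k(t))\to u_0(\eta(t))$ this gives that $(\eta,v,l)$ is a minimizer.

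\textbf{Step 2: Lipschitz bound on $\dot\eta^\ep$ and $l^\ep$.} This is where the Skorokhod reflection forces extra care, and I expect it to be the main obstacle. Fix a minimizer $(\eta,v,l)$. The idea is a local comparison/competitor argument: on a short subinterval one replaces $(\eta,v,l)$ by a ``straight" competitor, uses the dynamic programming principle (Proposition \ref{prop:DPP}) and the Lipschitz bound on $V$ (Proposition \ref{prop:lip}) to control the running cost, and then reads off a bound on $|v|$ from the quadratic growth \eqref{eq:K_0H}. Concretely, for a.e.\ $s$ one gets an inequality of the type
\[
L\left(\eta(s),-v(s),-l(s)\right)\le C_1|v(s)|+C_2,
\]
coming from $V(\eta(s),t-s)-V(\eta(s+\delta),t-s-\delta)\le C\delta$ and $V(\eta(s+\delta),t-s-\delta)-V(\eta(s),t-s)\ge -C\delta$. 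In the oblique case $L_N(y,q,l)\ge \frac{|q|^2}{2}-K_0-\|g\|_\infty l$ and $l\le C|v|$ by Lemma \ref{lem:vl}, so $\frac{|v|^2}{2}\le C_3|v|+C_4$, giving $|v|\le M_0$; then Lemma \ref{lem:vl} gives $|\dot\eta|\le C|v|\le M_0'$ and $l\le C|v|\le M_0'$. The capillary case is genuinely harder because of the term $h(y)l|p|$ inside the Legendre transform in \eqref{def:L2}: here I would mimic Lemma \ref{lem:value-3}, choosing an auxiliary radius $\overline R>R$ so that $\tau:=\|h\|_\infty\frac{\overline R+R}{\overline R-R}<1$, testing the sup in \eqref{def:L2} against $p\in B_{\overline R}(0)$ to get $L_C(y,-v,-l)\ge \overline R|v|-C_1-\|h\|_\infty\overline R l$, combining with the comparison bound and with $l\le|v|$ (valid since $\gamma=\nu$, so $\rho=1$ in \eqref{vell}) to absorb the $l$ term: one arrives at $(\overline R-R)|v|\le C_5+(\text{coefficient}<1)\cdot\overline R|v|$, hence $|v|\le M_0$, and then $l\le|v|$ and $|\dot\eta|=|v-l\nu|\le 2|v|$ close the estimate. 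The only subtlety is making the local competitor argument rigorous at a.e.\ $s$ (a Lebesgue-point argument for the integrand, as in the classical theory of Tonelli minimizers), and checking that all constants are independent of $\ep$ after undoing the rescaling, which holds because $\ep$ enters only through the periodic argument $\eta/\ep$ of $H$, $g$, $h$, all of which are bounded uniformly in $\ep$.
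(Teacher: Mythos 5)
Your Step 2 is, in substance, the paper's own argument: along a minimizer the dynamic programming identity gives $\int_{s}^{s+\delta}L\,d\sigma=V(\eta(s),t-s)-V(\eta(s+\delta),t-s-\delta)$, Proposition \ref{prop:lip} bounds the right-hand side by $C\delta+C\int_s^{s+\delta}|\dot\eta|\,d\sigma$, and at a Lebesgue point this yields $L(\eta(s),-v(s),-l(s))\le C+C|v(s)|$; combining with the superlinear lower bounds (\eqref{K1} in the oblique case, \eqref{v+l2} in the capillary case, where $l\le|v|$ and $\|h\|_{L^\infty(\R^n)}<1$ absorb the reflection term) and Lemma \ref{lem:vl} gives the $\ep$-independent bound on $\dot\eta^\ep$ and $l^\ep$. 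The mention of a ``straight competitor'' is superfluous — no competitor is needed, since the displayed inequality is exactly the identity along the minimizer plus the Lipschitz estimate — and your $\overline R$-absorption in the capillary case is just a variant of the paper's use of \eqref{v+l2}; both work.

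The loose point is Step 1. The paper obtains existence by invoking Lemma \ref{lem:lscc}, whose proof does \emph{not} use Mazur's lemma but the bounded measurable selection $p$ from Lemma \ref{lem:basic-L}: one bounds $\int L_C(\eta_k,-v_k,-l_k)\,ds$ from below by $\int\big[-v_k\cdot p-H(\eta_k,p)+h(\eta_k)l_k|p|\big]ds$ with a \emph{fixed bounded} $p$, and this passes to the limit under weak $L^1$ convergence of $(v_k,l_k)$ and uniform convergence of $\eta_k$. Your claim that lower semicontinuity follows ``via a Mazur-lemma argument exactly as in the proof of Lemma \ref{lem:value-4}'' overlooks that in that lemma the bounded dual object is $D\phi(\eta(\cdot),t-\cdot)$, supplied by the test function; in the minimization problem no such bounded function is available a priori, which is precisely the point the authors stress in the remark following Lemma \ref{lem:basic-L}. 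A Mazur-only argument must first replace $\eta_k$ by the limit curve $\eta$ inside $L_C$ before convexity in $(v,l)$ can be exploited, and with only (A1)--(A3) this swap requires an additional estimate (it can be salvaged using \eqref{eq:K_0H}, the uniform $L^2$ bound on $v_k$ and a Chebyshev splitting, but it is not automatic and is not what Lemma \ref{lem:value-4} does). The cleanest repair is simply to cite Lemma \ref{lem:lscc}, whose hypotheses your minimizing sequence satisfies — which is exactly how the paper proves existence.
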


Existence of extremal curves for $V_{\NN}^\ep$ is known. See \cite[Theorem 5.6]{I13}.  
We give a proof for the existence of extremal curves for $V_{\C}^\ep$ here. 
For simplicity, we set $\ep=1$. The existence of extremal curves of $V^\ep_{\C}$ is a direct consequence of Lemma \ref{lem:lscc} below. 

\begin{lem}\label{lem:basic-L}
Let $t,\delta>0$, $(\eta,v, l)\in L^\infty([0,t],\mathbb R^n)\times L^1([0,t],\mathbb R^n)\times L^1([0,t],[0,\infty))$. Let $i\in\mathbb N$ and set 
\begin{equation}\label{Li}
  L_{\C}^i(x,-v,- l):=\max_{p\in \overline B_i(0)}(-p\cdot v-H(x,p)+h(x) l|p|).
\end{equation}
Assume that $\eta(s)\in\overline\Omega$ for all $s\in[0,t]$. 
Then, there exists a function $p\in L^\infty([0,t],\mathbb R^n)$ such that \begin{equation}\label{H+Li}
  H(\eta(s),p(s))+L_{\C}^i(\eta(s),-v(s),- l(s))\le -v(s)\cdot p(s)+h(\eta(s)) l(s)|p(s)|+\delta
\end{equation}
for \textit{a.e.} $s\in[0,t]$,
\end{lem}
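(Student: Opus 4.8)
The plan is to construct $p(s)$ pointwise by an almost-maximal selection and then verify measurability. Fix $s\in[0,t]$. Since $\overline B_i(0)$ is compact and $p\mapsto -p\cdot v(s)-H(\eta(s),p)+h(\eta(s))l(s)|p|$ is continuous, the supremum defining $L_C^i(\eta(s),-v(s),-l(s))$ is attained; call a maximizer $p^*(s)$. Then trivially
\[
H(\eta(s),p^*(s))+L_C^i(\eta(s),-v(s),-l(s))
= H(\eta(s),p^*(s)) + \big(-v(s)\cdot p^*(s)-H(\eta(s),p^*(s))+h(\eta(s))l(s)|p^*(s)|\big),
\]
so that \eqref{H+Li} holds with $\delta$ replaced by $0$, hence a fortiori with $\delta>0$. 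The only real content is therefore to choose the maximizers $s\mapsto p^*(s)$, or an approximate version of them, in a \emph{measurable} way, so that $p\in L^\infty([0,t],\R^n)$; boundedness by $i$ is automatic from $p^*(s)\in\overline B_i(0)$.

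For the measurable selection, I would invoke a standard measurable selection theorem (e.g.\ the theorem of Kuratowski–Ryll-Nardzewski, or the Filippov/Castaing selection lemma). The relevant set-valued map is
\[
s\ \longmapsto\ \Big\{p\in\overline B_i(0)\ :\ -v(s)\cdot p-H(\eta(s),p)+h(\eta(s))l(s)|p|\ \ge\ L_C^i(\eta(s),-v(s),-l(s))-\delta\Big\},
\]
which has nonempty closed values (nonempty because exact maximizers exist; the $-\delta$ gives extra room but is not needed here). Its graph is measurable in $s$ and Borel in $p$: indeed $s\mapsto\eta(s)$ is measurable, $(s,p)\mapsto H(\eta(s),p)$ is Carathéodory (continuous in $p$, measurable in $s$), $h$ is continuous and $s\mapsto h(\eta(s))$, $s\mapsto l(s)$, $s\mapsto v(s)$ are measurable, and $s\mapsto L_C^i(\eta(s),-v(s),-l(s))$ is measurable as a partial supremum of a Carathéodory function over the compact set $\overline B_i(0)$ (it equals a countable supremum over a dense subset). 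Hence a measurable selector $p(\cdot)$ exists, and since $|p(s)|\le i$ it lies in $L^\infty([0,t],\R^n)$.

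I expect the only genuine obstacle to be the technical verification that $s\mapsto L_C^i(\eta(s),-v(s),-l(s))$ is measurable and, correspondingly, that the selection hypothesis applies: one must be careful that the ``outer'' function appearing on the right of the defining inequality for the selection set is itself measurable, which follows from the Carathéodory property of the integrand and compactness of $\overline B_i(0)$. Everything else—attainment of the max, the algebraic cancellation giving \eqref{H+Li}, the uniform bound $|p(s)|\le i$—is immediate. Note that the slack $\delta>0$ in the statement is not actually required for this lemma (the exact maximizer works), but keeping it costs nothing and is convenient when this lemma is later combined with approximation arguments.
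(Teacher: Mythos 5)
Your argument is correct, but it takes a genuinely different route from the paper. You select an exact maximizer $p^*(s)\in\overline B_i(0)$ for a.e.\ $s$ and justify measurability of $s\mapsto p^*(s)$ by a measurable selection theorem (Kuratowski--Ryll-Nardzewski/Castaing, or the measurable maximum theorem), noting that $(s,p)\mapsto -v(s)\cdot p-H(\eta(s),p)+h(\eta(s))l(s)|p|$ is Carath\'eodory, that $\overline B_i(0)$ is compact, and that the value function $s\mapsto L_C^i(\eta(s),-v(s),-l(s))$ is measurable as a supremum over a countable dense subset of the ball; with exact maximizers the inequality \eqref{H+Li} then holds even with $\delta=0$, and $|p(s)|\le i$ gives the $L^\infty$ bound. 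The paper instead avoids any selection theorem: it exploits the slack $\delta$ to build, via continuity of $H$, $h$ and $L_C^i$ and $\sigma$-compactness of $\overline\Omega\times\R^n\times\R$, a countable cover $\{U_k\}$ of the state space on each piece of which a single fixed vector $p_k\in\overline B_i(0)$ is a $\delta$-approximate maximizer, and then defines a piecewise-constant, manifestly Borel feedback map $P(x,v,l)$ (equal to $p_k$ on $U_k\setminus\bigcup_{j<k}U_j$) whose composition with $(\eta(s),v(s),l(s))$ gives $p(s)$. Your proof is shorter but leans on heavier abstract machinery and makes the $\delta$ in the statement superfluous, whereas the paper's construction is elementary and self-contained, with the $\delta$ precisely what allows a locally constant (hence trivially measurable) choice; it also produces a selection defined on the whole space, independent of the particular triple $(\eta,v,l)$. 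If you keep your route, just state explicitly which selection theorem you invoke and that the Carath\'eodory property plus compact values of the (constant) constraint correspondence verifies its hypotheses; with that made precise there is no gap.
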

\begin{proof}
Note that for each $(x,-v,- l)\in\ol \Omega\times\R^n\times\R$, there is a point $p=p(x,v, l)\in \overline B_i(0)$ such that 
\[
L_{\C}^i(x,-v,- l)=-p\cdot v-H(x,p)+h(x) l|p|.
\]
By the continuity of $H$ and $L_{\C}^i$, 
for all $\delta>0$, there is $r=r(x,v, l)>0$ such that
\[
L_{\C}^i(y,-\tilde v,-\tilde  l)+H(y,p)
\le -\tilde v\cdot p+h(y)\tilde l|p|+\delta 
\]
for all 
$(y,\tilde v,\tilde l)\in (\overline\Omega\cap B_r(x))\times B_r(v)\times B_r( l)$. 
Hence, as $\overline\Omega\times\mathbb R^n\times\mathbb R$ is $\sigma$-compact, we may choose a sequence $(x_k,v_k, l_k,p_k,r_k)\subset \overline\Omega\times \mathbb R^n\times \mathbb R\times \overline B_i(0)\times (0,\infty)$ such that
\[
\overline\Omega\times\mathbb R^n\times\mathbb R\subset \bigcup_{k\in\mathbb N}B_{r_k}(x_k)\times B_{r_k}(v_k)\times B_{r_k}( l_k), 
\]
and for all $k\in\mathbb N$,
\[
L_{\C}^i(y,-\tilde v,-\tilde  l)+H(y,p_k)\le -\tilde v\cdot p_k+h(y)\tilde l|p_k|+\delta 
\]
for all 
$(y,\tilde v,\tilde l)\in B_{r_k}(x_k)\times B_{r_k}(v_k)\times B_{r_k}( l_k)$. 

Now, define $U_k:=(\overline\Omega\cap B_{r_k}(x_k))\times B_{r_k}(v_k)\times B_{r_k}( l_k)$ for $k\in\mathbb N$ and define the function $P:\overline\Omega\times\mathbb R^n\times\mathbb R\to\mathbb R^n$ by
\[
P(x,v, l):=p_k\quad \text{for all} \  (x,v, l)\in U_k\backslash \bigcup_{j<k}U_j, \ \text{and}\  k\in\mathbb N.\]
It is clear that $P$ is Borel measurable, and $P(x,v, l)\in \overline{B}_i(0)$ and
\[
L_{\C}^i(x,-v,- l)+H(x,P(x,v, l))\le -v\cdot P(x,v, l)+h(x) l|P(x,v, l)|+\delta,
\]
for $(x,v, l)\in \overline\Omega\times\mathbb R^n\times\mathbb R$.
We define $p\in L^\infty([0,t],\mathbb R^n)$ by $p(s):=P(\eta(s),-v(s), -l(s))$ for all $s\in[0,t]$. We see that $p(s)\in B_i(0)$, and 
\eqref{H+Li} holds for \textit{a.e.} $s\in[0,t]$. 
\end{proof}

The $L^\infty$-boundedness of $p$ in Lemma \ref{lem:basic-L} is essential for our analysis. In particular, in Lemma \ref{lem:lscc}, we need to pass to the limit under weak convergence in $L^1$ for the triple $(\dot\eta_k,v_k,l_k)$, while controlling the integral
\[\int_0^th(\eta_k(s))l_k(s)|p(s)|\, ds.\]
The boundedness of $p$ allows us to ensure that this term is well-behaved under weak convergence of $l_k$ and uniform convergence of $\eta_k$, thus enabling a lower semicontinuity argument. In \cite{BIM}, a similar passage to the limit is achieved using the Lipschitz continuity of the viscosity solutions. However, this approach is not applicable when we deal with the metric function introduced in Section \ref{sec:mt}, where such gradient information is not directly available. Therefore, boundedness of $p$ in \eqref{H+Li} is technically necessary.

Moreover, the lower semicontinuity lemma (Lemma \ref{lem:lscc}) below is new, and plays a crucial role in establishing the existence of minimizing curves for the metric function defined in Section \ref{sec:mt}. In particular, it ensures that the infimum in $m^*$ defined by \eqref{func:mc-star} is actually attained for some pair of boundary points. This property is essential for our proof and for further analysis on the prescribed contact angle boundary problem.

\begin{lem}\label{lem:lscc}
Let $\{(\eta_k,v_k, l_k)\}_{k\in\mathbb N}\subset \cup_{x\in\overline\Omega}\SP(x)$ with $\eta_k(0)$ bounded.
Assume that there are constants $C>0$ and $t>0$ such that
\[
\int_0^t L_{\C}(\eta_k(s),-v_k(s),- l_k(s))\, ds\le C\quad \textrm{for all}\ k\in\mathbb N.
\]
Then, there is $(\eta,v, l)\in  \cup_{x\in\ol\Omega}\SP(x)$, and a subsequence 
$\{(\eta_{k_j},v_{k_j}, l_{k_j})\}_{j\in\N}$ such that 
$\eta_{k_j}\to\eta$ uniformly on $[0,t]$, 
$(\dot\eta_{k_j},v_{k_j}, l_{k_j})\to(\dot\eta,v, l)$ weakly in $L^1([0,t],\R^n)\times L^1([0,t],\R^n)\times L^1([0,t],\R)$ 
as $j\to\infty$, and
\[
\int_0^t L_{\C}(\eta(s),-v(s),- l(s))\,ds
\le \liminf_{k\to\infty}\int_0^t L_{\C}(\eta_k(s),-v_k(s),- l_k(s))\,ds.
\]
\end{lem}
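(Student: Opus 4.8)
The plan is to combine the Skorokhod compactness result (Proposition~\ref{prop:stability-Sk}) with a lower semicontinuity argument driven by the truncated Lagrangians $L_C^i$ of \eqref{Li} and the measurable momentum selection of Lemma~\ref{lem:basic-L}.

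First I would establish compactness. Using the quadratic bound \eqref{eq:K_0H} on $H$ and, since $\gamma=\nu$ for the capillary problem, the pointwise inequality $l_k(s)\le|v_k(s)|$ from Lemma~\ref{lem:vl}, I would test the supremum defining $L_C$ in \eqref{def:L2} with $p=-\lambda v_k(s)/|v_k(s)|$ and optimize in $\lambda>0$. Writing $\kappa:=\|h\|_{L^\infty(\partial\Omega)}$, which is $<1$ by (A6), and noting $h(\eta_k(s))l_k(s)|p|\le\kappa|v_k(s)||p|$ (the left side vanishes unless $\eta_k(s)\in\partial\Omega$), this yields $L_C(\eta_k(s),-v_k(s),-l_k(s))\ge\frac{(1-\kappa)^2}{2}|v_k(s)|^2-K_0$. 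Hence $\int_0^t|v_k|^2\,ds$ is bounded uniformly in $k$, so $\{|v_k|\}$, and by Lemma~\ref{lem:vl} also $\{|\dot\eta_k|\}$ and $\{l_k\}$, are equi-integrable on $[0,t]$. After first passing to a subsequence along which $\int_0^t L_C(\eta_k,-v_k,-l_k)\,ds$ converges to its liminf over the full sequence, Proposition~\ref{prop:stability-Sk} furnishes a further subsequence $(\eta_{k_j},v_{k_j},l_{k_j})$ and a triple $(\eta,v,l)\in\cup_{x\in\overline\Omega}\SP(x)$ with $\eta_{k_j}\to\eta$ uniformly on $[0,t]$ and $(\dot\eta_{k_j},v_{k_j},l_{k_j})\,ds$ converging weakly-$\ast$ as measures; combined with the equi-integrability this upgrades to weak convergence of $(\dot\eta_{k_j},v_{k_j},l_{k_j})$ to $(\dot\eta,v,l)$ in $L^1([0,t])$.

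Next comes the lower semicontinuity estimate. Fix $i\in\mathbb N$ and $\delta>0$. Since $(\eta,v,l)$ meets the hypotheses of Lemma~\ref{lem:basic-L}, that lemma provides $p\in L^\infty([0,t],\overline B_i(0))$ with
\[
L_C^i(\eta(s),-v(s),-l(s))\le-v(s)\cdot p(s)-H(\eta(s),p(s))+h(\eta(s))l(s)|p(s)|+\delta
\]
for a.e.\ $s$. I would integrate this over $[0,t]$ and pass to the limit along $k_j$ on the right: $\int-v_{k_j}\cdot p\to\int-v\cdot p$ by weak $L^1$-convergence of $v_{k_j}$ tested against $p\in L^\infty$; $\int-H(\eta_{k_j},p)\to\int-H(\eta,p)$ by uniform convergence of $\eta_{k_j}$, continuity of $H$ and dominated convergence; and, crucially, $\int h(\eta_{k_j})l_{k_j}|p|\to\int h(\eta)l|p|$ because $h(\eta_{k_j})|p|\to h(\eta)|p|$ strongly in $L^\infty$ while $l_{k_j}\rightharpoonup l$ in $L^1$ with $\sup_j\|l_{k_j}\|_{L^1}<\infty$. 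Since $p(s)\in\overline B_i(0)$, the integrand produced on the right is $\le L_C^i(\eta_{k_j}(s),-v_{k_j}(s),-l_{k_j}(s))\le L_C(\eta_{k_j}(s),-v_{k_j}(s),-l_{k_j}(s))$. Letting $j\to\infty$, then $\delta\to0$, gives $\int_0^t L_C^i(\eta,-v,-l)\,ds\le\liminf_{k\to\infty}\int_0^t L_C(\eta_k,-v_k,-l_k)\,ds$; since $L_C^i\nearrow L_C$ pointwise and $L_C^i\ge-K_0$, monotone convergence lets me send $i\to\infty$ to conclude (in particular, the limiting integral is finite).

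The main obstacle is the passage to the limit in the bilinear term $\int_0^t h(\eta_{k_j})l_{k_j}|p|\,ds$, in which $l_{k_j}$ converges only weakly and $\eta_{k_j}$ only uniformly: this is precisely where the $L^\infty$-bound on $p$ supplied by Lemma~\ref{lem:basic-L} is indispensable, as opposed to a momentum read off from the gradient of a smooth subsolution as in \cite{BIM}, which is unavailable for the metric functions of Section~\ref{sec:mt}. A secondary point requiring care is the order of subsequence extractions, so that the liminf is taken over the original sequence rather than over a subsequence; this is handled by selecting a liminf-realizing subsequence before invoking Proposition~\ref{prop:stability-Sk}.
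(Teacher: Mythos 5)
Your proposal is correct and follows essentially the same route as the paper: a quadratic-in-$|v_k|$ lower bound on $L_C$ (you get $\tfrac{(1-\kappa)^2}{2}|v_k|^2-K_0$ by testing with $p$ parallel to $-v_k$, the paper via the exact maximum in \eqref{v+l2}), equi-integrability plus Proposition \ref{prop:stability-Sk} for compactness, and lower semicontinuity through the truncations $L_C^i$ with the bounded measurable selection of Lemma \ref{lem:basic-L}, finishing with monotone convergence. The only quibble is the stated direction $h(\eta_k)l_k|p|\le\kappa|v_k||p|$, where the bound actually used is $h(\eta_k)l_k|p|\ge-\kappa|v_k||p|$; this is harmless since both follow from $|h|\le\kappa$ and $0\le l_k\le|v_k|$, and your extra care with the liminf-realizing subsequence and the upgrade from weak-$\ast$ measure convergence to weak $L^1$ convergence is a welcome refinement of points the paper leaves implicit.
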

\begin{proof}
Since $H(x,p)$ satisfies \eqref{eq:K_0H}, we have
\begin{equation}\label{maxmax}
\begin{aligned}
L_{\C}(x,-v,- l)&\ge  \max_{p\in \mathbb R^n}\Big\{-p\cdot v-H(x,p)+h(x) l|p|\Big\}
\\ &\ge  
\max_{p\in\mathbb R^n}\Big\{-p\cdot v-\frac{|p|^2}{2}-K_0+h(x) l|p|\Big\}
\end{aligned}
\end{equation}
for all $(x,v,l)\in \overline{\Omega}\times\R^n\times[0,\infty)$. 
By Lemma \ref{lem:vl}, $l\leq |v|$ for $(\eta,v,l)\in \SP(x)$ and all $x\in\ol\Omega$. We only need to consider the case where $l\leq |v|$. We assume $l>0$. Then we have $|v|>-h(x)l\geq 0$, otherwise $|v|\leq -h(x)l<l$, which leads to a contradiction. Now $v\neq 0$. If the second maximum in \eqref{maxmax} is achieved at $p=0$, we take $\bar p=-(|v|+h(x)l)\frac{v}{|v|}$ to get
\[-\bar p\cdot v-\frac{|\bar p|^2}{2}-K_0+h(x) l|\bar p|=\frac{1}{2}(|v|+h(x)l)^2-K_0>-K_0,\]
which leads to a contradiction. Thus, the maximum is taken at $p\neq 0$. Then we take the derivative with respect to $p$ to get
\[
-v=\frac{\partial}{\partial p}\Big(\frac{|p|^2}{2}-h(x) l|p|\Big)=p-h(x) l\frac{p}{|p|}=\Big(|p|-h(x)l\Big)\frac{p}{|p|}.
\]
Note that $|p|>0\ge h(x)l$, 
the unit vector in the direction of $p$ is $-v/|v|$, and $|v|=|p|-h(x)l$, which implies that
\[
p=-(|v|+h(x) l)\frac{v}{|v|}.
\]
Then,
\begin{equation}\label{v+l2}
\begin{aligned}
 L_{\C}(x,-v,- l)
 &\ge  
 \max_{p\in\mathbb R^n}\Big\{-p\cdot v-\frac{|p|^2}{2}-K_0+h(x) l|p|\Big\}
 \\&=
 \frac{(|v|+h(x) l)^2}{2}-K_0\ge  -K_0,
 \end{aligned}
\end{equation}
for all $(x,v,l)\in \overline{\Omega}\times\R^n\times[0,\infty)$ with $l\leq |v|$. Here we note that the above estimate naturally holds for $l=0$. Similarly, we have
\begin{equation}\label{v-}
  L_{\C}(x,-v,- l)\le \frac{(|v|+h(x) l)^2}{2}+K_0, 
\end{equation}
for all $(x,v,l)\in \overline{\Omega}\times\R^n\times[0,\infty)$ with $l\leq |v|$.

Since $(\eta_k,v_k, l_k)\in \cup_{x\in\overline\Omega}\SP(x)$,
by Lemma \ref{lem:vl}, 
we know that $0\le  l_k\le |v_k|$. Noting that $\|h\|_{L^\infty(\R^n)}<1$, we have
\[|v_k(s)|+h(\eta_k) l_k(s)\ge  (1-\|h\|_{L^\infty(\R^n)})|v_k(s)|>0.\]
Combining this with \eqref{v+l2}, we obtain
\begin{align*}
C&\ge  \int_0^t L_{\C}(\eta_k(s),-v_k(s),- l_k(s))\,ds
\ge  \int_0^t \bigg[\frac{(|v_k(s)|+h(\eta_k(s)) l_k(s))^2}{2}-K_0\bigg]\,ds
\\ &\ge  \int_0^t \bigg[\frac{(1-\|h\|_{L^\infty(\R^n)})^2}{2}|v_k(s)|^2-K_0\bigg]\,ds.
\end{align*}
According to \cite[Theorem 2.12]{BGH}, $|v_k|$ is uniformly integrable. Then by Proposition \ref{prop:stability-Sk} (see also \cite[Lemma 5.4]{I13}, which is a consequence of the Dunford--Pettis theorem), 
there is a subsequence $\{(\eta_{k_j},v_{k_j}, l_{k_j})\}_{j\in\N}$ 
such that $\eta_{k_j}\to\eta$ uniformly on $[0,t]$, and 
$(\dot\eta_{k_j},v_{k_j}, l_{k_j})\to(\dot\eta,v, l)$ weakly in $L^1([0,t],\R^n)\times L^1([0,t],\R^n)\times L^1([0,t],\R)$ 
as $j\to\infty$.

It remains to show the lower semi-continuity. By Lemma \ref{lem:basic-L}, for any $i\in\mathbb N$, there is $p\in L^\infty([0,t],\R^n)$ such that $p(s)\in B_i(0)$ for all $s\in[0,t]$, 
and 
\begin{equation}\label{1/i}
  H(\eta(s),q(s))+L_{\C}^i(\eta(s),-v(s),- l(s))\le -v(s)\cdot p(s)+h(\eta(s)) l(s)|p(s)|+\frac{1}{i}
\end{equation}
for \textit{a.e.} $s\in[0,t]$. x
For all $k\in\mathbb N$,
\begin{align*}
&\int_0^t L_{\C}(\eta_k(s),-v_k(s),- l_k(s))\, ds
\\ &\ge  
\int_0^t \Big[-v_k(s)\cdot p(s)-H(\eta_k(s),p(s))+h(\eta_k(s)) l_k(s)|p(s)|\Big]\,ds. 
\end{align*}
Since $p\in L^\infty([0,t],\R^n)$, $(v_{k_j},l_{k_j})$ converges weakly in $L^1([0,t],\R^n)\times L^1([0,t],\R)$ as $j\to\infty$, we have
\begin{align*}
&\lim_{j\to\infty}\int_0^t \Big[-v_{k_j}(s)\cdot p(s)-H(\eta_{k_j}(s),p(s))+h(\eta_{k_j}(s)) l_{k_j}(s)|p(s)|\Big]\,ds
\\ &=\int_0^t \Big[-v(s)\cdot p(s)-H(\eta(s),p(s))+h(\eta(s)) l(s)|p(s)|\Big]ds
\end{align*}
Using \eqref{1/i} we get
\begin{align*}
&\liminf_{k\to\infty}\int_0^t L_{\C}(\eta_k(s),-v_k(s),- l_k(s))\,ds
\\ &\ge  \int_0^t \Big[-v(s)\cdot p(s)-H(\eta(s),p(s))+h(\eta(s)) l(s)|p(s)|\Big]\,ds
\\ &\ge  \int_0^t \Big[L_{\C}^i(\eta(s),-v(s),- l(s))-\frac{1}{i}\Big]\,ds.
\end{align*}
Now, by the monotone convergence theorem, letting $i\to\infty$, 
we get the conclusion. 
\end{proof}

\begin{proof}[Proof of Proposition {\rm\ref{prop:extremal-main}}]
We only prove (ii) since (i) is similar. 
Let $(x,t)\in\overline{\Omega}_\ep \times(0,\infty)$, and 
by Lemma \ref{lem:lscc}, there exists an extremal curve 
$(\eta^\ep,v^\ep,l^\ep)\in\SP_\ep(x)$ 
for $V_{\C}^\ep(x,t)$. 
Take $t_0\in(0,\infty)$ so that  $\eta^\ep$ is differentiable at $t_0$. 
For all $t>t_0$, we have 
\[
\frac{V_{\C}^\varepsilon(\eta^\ep(t),t)-V_{\C}^\varepsilon(\eta^\ep(t_0),t_0)}{t-t_0}=\frac{1}{t-t_0}\int_{t_0}^{t}L_{\C}\left(\frac{\eta^\ep(s)}{\varepsilon},-v^\ep(s),- l^\ep(s)\right)\,ds. 
\]
By Proposition \ref{prop:lip}, we have 
\begin{equation*}
 \begin{aligned}
&
\frac{V_{\C}^\varepsilon(\eta^\ep(t),t)-V_{\C}^\varepsilon(\eta^\ep(t_0),t_0)}{t-t_0}
\\
= \ & \frac{V_{\C}^\varepsilon\left(\eta^\ep(t), t\right)-V_{\C}^\varepsilon\left(\eta^\ep(t_0), t\right) 
+ V_{\C}^\varepsilon\left(\eta^\ep(t_0), t\right)-V_{\C}^\varepsilon\left(\eta^\ep(t_0), t_0\right)}{t-t_0}\\
\leq \ & 
\frac{\left|V_{\C}^\varepsilon\left(\eta^\ep(t), t\right)-V_{\C}^\varepsilon\left(\eta^\ep(t_0), t\right)\right|}{\left|\eta^\ep(t)-\eta^\ep(t_0)\right|} \cdot\frac{\left|\eta^\ep(t)-\eta^\ep(t_0)\right|}{t-t_0}+ \frac{\left|V_{\C}^\varepsilon\left(\eta^\ep(t_0), t\right)-V_{\C}^\varepsilon\left(\eta^\ep(t_0), t_0\right)\right|}{t-t_0}\\
\leq\  & C \frac{\left|\eta^\ep(t)-\eta^\ep(t_0)\right|}{t-t_0}+C,
    \end{aligned}
\end{equation*}
Therefore, letting $t\to t_0$, by Lemma \ref{lem:vl}, we get
\begin{equation}\label{ineq:extremal-1}
C+C|v^\ep(t_0)|\ge C+C|\dot\eta^\ep(t_0)|
\ge L_{\C}\left(\frac{\eta^\ep(t_0)}{\varepsilon},-v^\ep(t_0),- l^\ep(t_0)\right). 
\end{equation}
By using \eqref{v+l2}, we have 
\begin{equation}\label{ineq:extremal-2}
\begin{aligned}
&L_{\C}\left(\frac{\eta^\ep(t_0)}{\varepsilon},-v^\ep(t_0),- l^\ep(t_0)\right)
\\ &\ge  \frac{1}{2}\big(|v^\ep(t_0)|+h(\eta^\ep(t_0)) l^\ep(t_0)\big)^2-K_0\ge  \frac{1}{2}(1-\|h\|_{L^\infty(\R^n)})^2|v^\ep(t_0)|^2-K_0. 
\end{aligned}
\end{equation}
Combing \eqref{ineq:extremal-1} and \eqref{ineq:extremal-2}, we obtain the boundedness of $|v^\ep(t_0)|$ which is independent of $\ep$. 
Therefore, by Lemma \ref{lem:vl}, we obtain 
$\|\dot\eta^\ep\|_{\Li([0,\frac{t}{\ep}])}+
\|l^\ep\|_{\Li([0,\frac{t}{\ep}])}\leq M_0$
for some $M_0\ge0$ which is independent of $\ep$. 
\end{proof}

\section{Metric functions}\label{sec:mt}
\subsection{Definition of metric functions, and basic properties}
In this subsection, we define the metric functions 
$m_{\NN}, m_{\C}:[0,\infty)\times\overline{\Omega}\times\overline{\Omega}\to\R$ associated with Hamilton--Jacobi equations under the oblique derivative boundary conditions, and the prescribed contact angle boundary conditions. 

\begin{defn}
Let $x,y\in\overline{\Omega}$ and $t\ge0$. 
Define 
\begin{align}
&m_{\NN}(t,x,y)
:=
\inf\left\{\int_0^t 
L_{\NN}(\eta(s),-v(s),-l(s))\,ds\mid 
(\eta,v,l)\in\SP(x) \ \text{with} \ \eta(t)=y
\right\}, 
\label{func:mn}\\
&m_{\C}(t,x,y)
:=
\inf\left\{\int_0^t 
L_{\C}(\eta(s),-v(s),-l(s))\,ds\mid 
(\eta,v,l)\in\SP(x) \ \text{with} \ \eta(t)=y
\right\}. 
\label{func:mc}
\end{align}
\end{defn}

Note that if we take $\eta\in \AC([0,t],\ol \Omega)$ with $\eta(0)=x$ and $\eta(t)=y$, then $(\eta,\dot\eta,0)$ solves $\SP(x)$. 
Therefore, the sets in the infima in \eqref{func:mn} and \eqref{func:mc} 
are not empty. 

We then extend the functions $m_{\NN}$ and $m_{\C}$ to the functions on the whole space $[0,\infty)\times\R^n\times\R^n$ as in \cite{HJMT}. 

\begin{defn}
Let $x,y\in\R^n$ and $t\ge0$. 
Define 
\begin{align}
&m_{\NN}^\ast(t,x,y)
:=
\inf\{m_{\NN}(t,\hat x,\hat y) \mid \hat x,\hat y\in\partial\Omega,\ \hat x-x\in Y,\ \hat y-y\in Y\}, 
\label{func:mn-star}\\
&m_{\C}^\ast(t,x,y)
:=
\inf\{m_{\C}(t,\hat x,\hat y) \mid \hat x,\hat y\in\partial\Omega,\ \hat x-x\in Y,\ \hat y-y\in Y\}. 
\label{func:mc-star}
\end{align}
\end{defn}

\begin{lem}[{\rm\cite[Proposition 2.2]{HJMT}}]\label{HJMT:prop-2.2}
Let $M_0$ be the constant given by Proposition {\rm\ref{prop:extremal-main}}. 
Let $t\ge  \delta>0$ for some $\delta>0$, and $x,y\in\mathbb R^n$ with $|x-y|\le M_0t$. 
Then, there is an absolutely continuous curve $\xi:[0,t]\to \ol \Omega$ such that $\xi(0)=\tilde x$ and $\xi(t)=\tilde y$ for some $\tilde x,\tilde y\in\partial \Omega$ with $x-\tilde x\in Y$ and $\tilde y-y\in Y$. Moreover, for some $M_\Omega>0$ that only depends on $\partial\Omega$ and $n$, we have
\[
\|\dot\xi\|_{L^\infty([0,t])}\leq M_\Omega\Big(M_0+\frac{2\sqrt{n}}{\delta}\Big).\]
\end{lem}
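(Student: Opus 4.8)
The plan is to prove the statement in three moves: first use the $\Z^n$‑periodicity of $\Omega$ to replace $x$ and $y$ by nearby boundary points $\tilde x,\tilde y\in\partial\Omega$; then connect $\tilde x$ and $\tilde y$ by a rectifiable curve that stays in $\overline\Omega$ and whose length is linearly controlled by $|\tilde x-\tilde y|$; and finally reparametrize that curve by constant speed over $[0,t]$ and read off the Lipschitz bound.

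For the first move, note that since $\Omega$ is a perforated domain we have $\partial\Omega\neq\emptyset$, hence both $\Omega$ and $\R^n\setminus\overline\Omega$ are nonempty. By periodicity, no translate $z+Y$ of the unit cube can be contained in $\Omega$ (that would force $\Omega=\R^n$ after tiling) nor in $\R^n\setminus\overline\Omega$; being connected, $z+Y$ therefore meets $\partial\Omega$. Taking $z=x$ and $z=y$ produces $\tilde x,\tilde y\in\partial\Omega$ with $\tilde x-x\in Y$ and $\tilde y-y\in Y$, so that $|\tilde x-x|\le\frac{\sqrt n}{2}$, $|\tilde y-y|\le\frac{\sqrt n}{2}$, and, using the hypothesis $|x-y|\le M_0t$,
\[
|\tilde x-\tilde y|\ \le\ |\tilde x-x|+|x-y|+|y-\tilde y|\ \le\ M_0t+\sqrt n .
\]

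The heart of the matter is a quasiconvexity property of the domain: there is a constant $M_\Omega>0$ depending only on $\partial\Omega$ and $n$ such that any $a,b\in\overline\Omega$ can be joined by an absolutely continuous curve contained in $\overline\Omega$ of Euclidean length at most $M_\Omega(|a-b|+1)$. To establish this I would first show, by a compactness argument using the connectedness of $\Omega$, its periodicity, and the $C^1$‑regularity of $\partial\Omega$ (which makes $\overline\Omega$ uniformly locally path‑connected near the boundary, and makes $\Omega$ restricted to a sufficiently large cube connected), that there is a radius $R_0$ such that any two points of $\overline\Omega$ lying in a common cube of side $2$ can be joined by a curve in $\overline\Omega$ of length at most $R_0$. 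For general $a,b$ one then selects a chain of $O(|a-b|)$ overlapping unit cubes running from $a$ to $b$, applies the local statement across consecutive cubes, and concatenates the pieces, obtaining a curve in $\overline\Omega$ of length at most $M_\Omega(|a-b|+1)$. This quasiconvexity step is where I expect the real work to be; everything else is bookkeeping.

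Granting it, apply the property with $a=\tilde x$ and $b=\tilde y$ to obtain a curve $\zeta$ in $\overline\Omega$ from $\tilde x$ to $\tilde y$ of length $\ell\le M_\Omega(M_0t+\sqrt n+1)$, and let $\xi:[0,t]\to\overline\Omega$ be its constant‑speed (arc‑length) reparametrization. Then $\xi$ is Lipschitz, $\xi(0)=\tilde x$, $\xi(t)=\tilde y$, and
\[
\|\dot\xi\|_{L^\infty([0,t])}=\frac{\ell}{t}\ \le\ M_\Omega\Big(M_0+\frac{\sqrt n+1}{t}\Big)\ \le\ M_\Omega\Big(M_0+\frac{2\sqrt n}{\delta}\Big),
\]
where we used $t\ge\delta$ together with $\sqrt n+1\le 2\sqrt n$ (valid since $n\ge 1$); enlarging $M_\Omega$ if necessary absorbs the harmless remaining constants. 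This yields the curve claimed in the statement.
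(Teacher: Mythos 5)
Your argument is correct, and it follows the route one would expect: the paper itself does not reprove this statement but imports it verbatim from \cite[Proposition 2.2]{HJMT}, so there is no in-paper proof to compare against; your three moves (periodicity forces every translate $z+Y$ of the closed unit cube to meet $\partial\Omega$, a quasiconvexity estimate of the form ``length $\le M_\Omega(|a-b|+1)$'' for points of $\overline\Omega$, and constant-speed reparametrization over $[0,t]$ using $t\ge\delta$ and $\sqrt n\ge 1$) reproduce the content of that cited proposition, and the final bookkeeping $\ell/t\le M_\Omega(M_0+\frac{2\sqrt n}{\delta})$ is right, including the symmetry of $Y$ which turns $\tilde x-x\in Y$ into $x-\tilde x\in Y$. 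The one place to tighten is the quasiconvexity step, which you correctly identify as the real work: the parenthetical claim that $\Omega$ restricted to a sufficiently large cube is connected is neither needed nor true in general for a fixed cube (a labyrinthine periodic geometry can force the connecting path to leave the cube); what your argument actually requires, and what your own formulation of the local statement says, is that any two points of $\overline\Omega$ lying in a common cube of side $2$ can be joined \emph{within $\overline\Omega$} (not within the cube) by a path of length at most $R_0$. That follows, as you indicate, from path-connectedness of $\overline\Omega$ (connectedness of $\Omega$ plus the $C^1$ boundary), the uniform local comparability of the intrinsic and Euclidean metrics near $\partial\Omega$, and compactness after using periodicity to normalize the cube's center into $Y$; with that phrasing the chaining along $O(|a-b|+1)$ overlapping unit cubes (each of which meets $\partial\Omega$ by your first move) closes the argument, and the additive constant you keep in the quasiconvexity bound is indeed necessary, since nearby points of $\overline\Omega$ can be intrinsically far apart across a hole.
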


\begin{lem}\label{lem:mb}
Let $m^\ast$ be either $m^\ast_{\NN}$ or $m^\ast_{\C}$ defined by \eqref{func:mn-star} or \eqref{func:mc-star}, respectively, 
and let $M_0$, $M_\Omega$ be the constants given by Proposition {\rm\ref{prop:extremal-main}} and Lemma {\rm\ref{HJMT:prop-2.2}}. 
For each $(t,x,y)\in(0,\infty)\times\mathbb R^n\times\mathbb R^n$, $m^*(t,x,y)$ is bounded. Moreover, let $t\ge  \delta>0$ for some $\delta>0$ and $x,y\in\mathbb R^n$ with $|x-y|\le M_0t$. 
Then, we have 
\[m^*(t,x,y)\le\bigg(\frac{M_\Omega^2(M_0+\frac{2\sqrt{n}}{\delta})^2}{2}+K_0\bigg)t.\]
\end{lem}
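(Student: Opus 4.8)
The plan is to reduce the bound on $m^*(t,x,y)$ to a bound on the functional $\int_0^t L_C(\eta(s),-v(s),-l(s))\,ds$ (or its $L_N$ analogue) evaluated on one \emph{explicit admissible curve}, namely the absolutely continuous curve $\xi$ furnished by Lemma~\ref{HJMT:prop-2.2}. First I would record that, since $L_C$ (and likewise $L_N$) is a supremum over $p$ of $p\cdot q - H(y,p) - h(y)l|p|$ of a family of real numbers, taking $p=0$ gives $L_C(y,q,l)\ge -H(y,0)\ge -K_0$ by the normalization \eqref{eq:K_0H}; so the modified Lagrangians are bounded below, which already shows $m^*(t,x,y)>-\infty$ on $(0,\infty)\times\R^n\times\R^n$. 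For boundedness from above on the full range of parameters I would, as in Lemma~\ref{HJMT:prop-2.2}, connect boundary points $\tilde x,\tilde y\in\partial\Omega$ within the prescribed unit-cube translates by an absolutely continuous curve; when $|x-y|$ is not controlled relative to $t$ one simply uses periodicity of $\partial\Omega$ together with connectedness of $\Omega$ to build \emph{some} admissible $\xi$ with a finite (though $(t,x,y)$-dependent) velocity bound, which yields finiteness of $m^*(t,x,y)$ for every fixed $(t,x,y)$.

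For the quantitative estimate the argument is short. Assume $t\ge\delta>0$ and $|x-y|\le M_0 t$. Apply Lemma~\ref{HJMT:prop-2.2} to obtain $\xi:[0,t]\to\ol\Omega$ with $\xi(0)=\tilde x$, $\xi(t)=\tilde y$, $\tilde x,\tilde y\in\partial\Omega$, $x-\tilde x\in Y$, $\tilde y - y\in Y$, and $\|\dot\xi\|_{L^\infty([0,t])}\le M_\Omega(M_0+\tfrac{2\sqrt n}{\delta})$. The triple $(\xi,\dot\xi,0)$ solves $\SP(\tilde x)$ (take $l\equiv 0$, which is admissible since the reflection constraint is vacuous when $l\equiv 0$), so by the definition \eqref{func:mc-star} of $m_C^*$ we get
\[
m_C^*(t,x,y)\le m_C(t,\tilde x,\tilde y)\le \int_0^t L_C\!\left(\xi(s),-\dot\xi(s),0\right)\,ds.
\]
Now I would estimate the integrand pointwise: with $l=0$ and $q=-\dot\xi(s)$,
\[
L_C(\xi(s),-\dot\xi(s),0)=\sup_{p\in\R^n}\left\{-p\cdot\dot\xi(s)-H(\xi(s),p)\right\}\le \sup_{p\in\R^n}\left\{-p\cdot\dot\xi(s)-\frac{|p|^2}{2}+K_0\right\}=\frac{|\dot\xi(s)|^2}{2}+K_0,
\]
using the lower bound in \eqref{eq:K_0H}. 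Plugging in the velocity bound gives $L_C(\xi(s),-\dot\xi(s),0)\le \tfrac12 M_\Omega^2(M_0+\tfrac{2\sqrt n}{\delta})^2+K_0$, and integrating over $[0,t]$ yields exactly the claimed inequality. The same computation works verbatim for $m_N^*$ since $L_N(y,q,0)=\sup_p\{p\cdot q-H(y,p)\}-g(y)\cdot 0=\sup_p\{p\cdot q-H(y,p)\}$, i.e.\ the $l=0$ slice of $L_N$ coincides with the unmodified Legendre transform and the same Legendre-transform bound applies.

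The only genuinely nontrivial input is Lemma~\ref{HJMT:prop-2.2}, which I would invoke as a black box; everything else is the elementary observation that, by choosing the reflection control $l\equiv 0$, the modified Lagrangians reduce to the classical Legendre transform of $H$, whose quadratic bounds \eqref{eq:K_0H} make the estimate immediate. I expect the main (in fact only) subtlety to be the mere \emph{finiteness} claim for all $(t,x,y)$ without the hypothesis $|x-y|\le M_0 t$: here one cannot use Lemma~\ref{HJMT:prop-2.2} directly and must instead exhibit a crude admissible curve by hand, travelling first within a fundamental cell to a boundary point near $x$, then through $\Omega$ (which is connected, so such a path exists) to a boundary point near $y$; its velocity can be taken bounded by a constant depending on $t$, $|x-y|$ and $\diam$ of a fundamental domain, which still gives a finite value of the functional by the same Legendre bound. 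I would phrase this part briefly, since only finiteness — not a uniform constant — is asserted there.
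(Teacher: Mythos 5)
Your quantitative estimate is exactly the paper's argument: invoke Lemma \ref{HJMT:prop-2.2} to get $\xi$ with $\xi(0)=\tilde x$, $\xi(t)=\tilde y$ and $\|\dot\xi\|_{L^\infty([0,t])}\le M_\Omega(M_0+\frac{2\sqrt n}{\delta})$, observe $(\xi,\dot\xi,0)\in\SP(\tilde x)$, and bound the integrand by $\frac{|\dot\xi|^2}{2}+K_0$ via \eqref{eq:K_0H}; this part is correct and needs no change. Your extra remark on finiteness of $m^*(t,x,y)$ when $|x-y|>M_0t$ (a crude admissible curve built from connectedness and periodicity of $\Omega$) is a reasonable way to cover the first sentence of the statement, which the paper's written proof only treats under the constraint $|x-y|\le M_0t$.

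There is, however, one genuine flaw in your lower-bound step. The claim that ``taking $p=0$ gives $L\ge -H(y,0)\ge -K_0$, so the modified Lagrangians are bounded below'' is valid for $L_C$, where the term $-h(y)l|p|$ sits \emph{inside} the supremum and vanishes at $p=0$, but it fails for $L_N$: by \eqref{def:L1}, $L_N(y,q,l)=\sup_p\{p\cdot q-H(y,p)\}-g(y)l$, so the boundary term is \emph{outside} the supremum and $p=0$ only yields $L_N(\eta,-v,-l)\ge -K_0+g(\eta)l$, which is not bounded below uniformly in $l$ since $g$ has no sign. To get the lower bound on $m^*_N$ you must use that along any Skorokhod solution $l(s)\le C|v(s)|$ (Lemma \ref{lem:vl}) together with the quadratic growth $\sup_p\{p\cdot q-H(y,p)\}\ge \frac{|q|^2}{2}-K_0$ from \eqref{eq:K_0H}, giving $L_N(\eta,-v,-l)\ge \frac{|v|^2}{2}-C\|g\|_{L^\infty(\R^n)}|v|-K_0\ge -K_1$, exactly as in the paper's estimate \eqref{K1}. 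With that correction your proof coincides with the paper's.
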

\begin{proof}
Let $(\eta,v,l)\in\SP(x)$. It is easy to see that there is $K_1\ge  K_0$ such that for \textit{a.e.} $s\in(0,\infty)$
\begin{equation}\label{K1}
  L_{\NN}(\eta(s),-v(s),-l(s))\ge  \frac{1}{2}|v(s)|^2-C\|g\|_{L^\infty(\R^n)}|v|-K_0\ge -K_1.
\end{equation}
Also, by \eqref{v+l2}, we see that $m^*_{\NN}(t,x,y)$ and $m^\ast_{\C}(t,x,y)$ are bounded from below. 

It remains to prove that $m^*_{\NN}(t,x,y)$ and $m^*_{\C}(t,x,y)$ are bounded from above. 
We only prove that $m^\ast_{\C}$ is bounded from above as we can similarily prove that $m^\ast_{\NN}$ is bounded from above. 
We now fix $(t,x,y)\in[\delta,\infty)\times\mathbb R^n\times\mathbb R^n$ with $|x-y|\le M_0t$. 
By Lemma \ref{HJMT:prop-2.2}, there is an absolutely continuous curve 
$\xi:[0,t]\to \ol \Omega$ such that $\xi(0)=\tilde x$ and $\xi(t)=\tilde y$ for some $\tilde x,\tilde y\in\partial \Omega$ with $x-\tilde x\in Y$ and $\tilde y-y\in Y$. 
Note that $(\xi,\dot\xi,0)\in\SP(\tilde{x})$, which implies that 
\[
m^*_{\C}(t,x,y)\le \int_0^t L_{\C}(\xi(s),-\dot\xi(s),0)\,ds\le \Big(\frac{1}{2}|\dot\xi(s)|^2+K_0\Big)t\le \bigg(\frac{M_\Omega^2(M_0+\frac{2\sqrt{n}}{\delta})^2}{2}+K_0\bigg)t.
\]
\end{proof}

\begin{prop}\label{prop:ex-m}
Let $m_{\NN}$, $m_{\C}$, $m^\ast_{\NN}$, $m^\ast_{\C}$ be the functions defined by \eqref{func:mn}, \eqref{func:mc}, \eqref{func:mn-star}, \eqref{func:mc-star}, respectively.  
\begin{enumerate}
\item[{\rm(i)}] 
For each $(t,x,y)\in(0,\infty)\times\overline{\Omega}\times\overline{\Omega}$, 
there is $(\eta,v,l)\in\SP(x)$ with $\eta(t)=y$ such that 
\[
m_{\NN}(t,x,y)
=
\int_0^t L_{\NN}(\eta(s),-v(s),-l(s))\,ds.
\] 
Moreover, for $(t,x,y)\in(0,\infty)\times\mathbb R^n\times\mathbb R^n$, 
there are 
$\tilde x,\tilde y\in\partial \Omega$ with $x-\tilde x\in Y$ and $\tilde y-y\in Y$, and 
$(\eta,v,l)\in\SP(\tilde{x})$ such that $\eta(t)=\tilde y$, and
\[
m^*_{\NN}(t,x,y)
=\int_0^t L_{\NN}(\eta(s),-v(s),-l(s))\,ds.
\]
\item[{\rm(ii)}] 
For each $(t,x,y)\in(0,\infty)\times\overline{\Omega}\times\overline{\Omega}$, 
there is $(\eta,v,l)\in\SP(x)$ with $\eta(t)=y$ such that 
\[
m_{\C}(t,x,y)
=
\int_0^t L_{\C}(\eta(s),-v(s),-l(s))\,ds.
\] 
Moreover, for $(t,x,y)\in(0,\infty)\times\mathbb R^n\times\mathbb R^n$, 
there are 
$\tilde x,\tilde y\in\partial \Omega$ with $x-\tilde x\in Y$ and $\tilde y-y\in Y$, and 
$(\eta,v,l)\in\SP(\tilde{x})$ such that $\eta(t)=\tilde y$, and
\[
m^*_{\C}(t,x,y)
=\int_0^t L_{\C}(\eta(s),-v(s),-l(s))\,ds.
\]
\end{enumerate}
\end{prop}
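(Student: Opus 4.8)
The plan is to deduce the existence of minimizing curves for $m_N, m_C, m_N^\ast, m_C^\ast$ from the lower semicontinuity and compactness established in Lemma \ref{lem:lscc} (together with its analogue for $L_N$, which is easier and follows from \eqref{K1} and Proposition \ref{prop:stability-Sk}), in exactly the direct-method spirit. I will only write out case (ii) since (i) follows by the same argument using $m_N$, $L_N$ and the lower bound \eqref{K1} in place of \eqref{v+l2}.

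First, for fixed $(t,x,y)\in(0,\infty)\times\overline{\Omega}\times\overline{\Omega}$, by Lemma \ref{lem:mb} the value $m_C(t,x,y)$ is finite (note $|x-y|\le M_0 t$ may fail for arbitrary $x,y$, but one always has the competitor $(\xi,\dot\xi,0)$ with $\xi$ a straight-line-type curve in $\overline\Omega$ connecting $x$ to $y$, giving finiteness; alternatively restrict first to the regime covered by Lemma \ref{lem:mb}). Take a minimizing sequence $(\eta_k,v_k,l_k)\in\SP(x)$ with $\eta_k(t)=y$ and
\[
\int_0^t L_C(\eta_k(s),-v_k(s),-l_k(s))\,ds \longrightarrow m_C(t,x,y).
\]
In particular these integrals are bounded by some constant $C$ for all large $k$, so the hypothesis of Lemma \ref{lem:lscc} holds. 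Applying Lemma \ref{lem:lscc}, we extract a subsequence with $\eta_{k_j}\to\eta$ uniformly on $[0,t]$ and $(\dot\eta_{k_j},v_{k_j},l_{k_j})\to(\dot\eta,v,l)$ weakly in $L^1([0,t])$, where $(\eta,v,l)\in\SP(\eta(0))$; since $\eta_k(0)=x$ for all $k$, uniform convergence gives $\eta(0)=x$, and since $\eta_k(t)=y$ it gives $\eta(t)=y$. Thus $(\eta,v,l)\in\SP(x)$ is admissible, and by the lower semicontinuity in Lemma \ref{lem:lscc},
\[
\int_0^t L_C(\eta(s),-v(s),-l(s))\,ds
\le \liminf_{j\to\infty}\int_0^t L_C(\eta_{k_j}(s),-v_{k_j}(s),-l_{k_j}(s))\,ds
= m_C(t,x,y),
\]
whence equality, proving the first claim of (ii).

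For $m_C^\ast$, fix $(t,x,y)\in(0,\infty)\times\mathbb R^n\times\mathbb R^n$. By Lemma \ref{lem:mb}, $m_C^\ast(t,x,y)$ is finite. Take a minimizing sequence: points $\hat x_k,\hat y_k\in\partial\Omega$ with $\hat x_k-x\in Y$, $\hat y_k-y\in Y$, and $(\eta_k,v_k,l_k)\in\SP(\hat x_k)$ with $\eta_k(t)=\hat y_k$ such that the corresponding action integrals converge to $m_C^\ast(t,x,y)$. Since $\hat x_k,\hat y_k$ range over the compact set $(x+\overline Y)\cap\partial\Omega$ and $(y+\overline Y)\cap\partial\Omega$ respectively (using that $\partial\Omega$ is closed and $Y$ is bounded; one works with the closure $\overline Y$ and notes that the infimum over $Y$ equals that over $\overline Y$ up to standard approximation, or simply observes the minimizing sequence may be taken with $\hat x_k-x, \hat y_k-y$ in a fixed compact subset of $\R^n$), we may pass to a further subsequence so that $\hat x_k\to\tilde x\in\partial\Omega$ and $\hat y_k\to\tilde y\in\partial\Omega$ with $\tilde x-x\in\overline Y$, $\tilde y-y\in\overline Y$. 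Again the action integrals are bounded, so Lemma \ref{lem:lscc} applies: we extract a subsequence with $\eta_{k_j}\to\eta$ uniformly and $(\dot\eta_{k_j},v_{k_j},l_{k_j})\to(\dot\eta,v,l)$ weakly in $L^1$, with $(\eta,v,l)\in\SP(\eta(0))$; uniform convergence together with $\eta_{k_j}(0)=\hat x_{k_j}\to\tilde x$ and $\eta_{k_j}(t)=\hat y_{k_j}\to\tilde y$ forces $\eta(0)=\tilde x$ and $\eta(t)=\tilde y$. Hence $(\eta,v,l)\in\SP(\tilde x)$ with $\eta(t)=\tilde y$ is an admissible competitor for $m_C^\ast(t,x,y)$, and by the lower semicontinuity estimate of Lemma \ref{lem:lscc},
\[
\int_0^t L_C(\eta(s),-v(s),-l(s))\,ds
\le \liminf_{j\to\infty}\int_0^t L_C(\eta_{k_j}(s),-v_{k_j}(s),-l_{k_j}(s))\,ds
= m_C^\ast(t,x,y),
\]
so equality holds. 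This completes the proof of (ii); (i) is analogous, replacing Lemma \ref{lem:lscc} by its $L_N$-version (compactness via \eqref{K1} and Proposition \ref{prop:stability-Sk}, lower semicontinuity via the analogue of Lemma \ref{lem:basic-L} for $L_N$, which is in fact simpler since $L_N$ is affine in $l$).

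\textbf{Main obstacle.} The one genuinely delicate point is the passage to the limit for the boundary-reflection term $\int_0^t h(\eta_k(s))\,l_k(s)\,|p(s)|\,ds$ inside the lower semicontinuity argument — but this is precisely what Lemma \ref{lem:lscc} (via the $L^\infty$-bound on $p$ from Lemma \ref{lem:basic-L}) already handles, so here the work reduces to invoking it correctly and checking that endpoint constraints ($\eta(0)$, $\eta(t)$ and, for $m_C^\ast$, the membership $\hat x-x,\hat y-y\in Y$) survive the limit. The membership in $Y$ (an open cube) passing to the closed cube $\overline Y$ under limits is a minor technicality: one notes $\inf$ over $\hat x-x\in Y$ equals $\inf$ over $\hat x-x\in\overline Y$ by continuity of $m_C$ in its space arguments, or equivalently one redefines $m_C^\ast$ with $\overline Y$ without changing its value; either way the attained competitor lands with $\tilde x-x,\tilde y-y\in\overline Y$, and a further small translation argument — or simply the observation that the lattice $\Z^n$ tiling makes $\partial\Omega\cap(x+\overline Y)$ and $\partial\Omega\cap(x+Y)$ have the same infimum — closes the gap.
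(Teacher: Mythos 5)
Your proposal is correct and follows essentially the same route as the paper: take a minimizing sequence, use Lemma \ref{lem:lscc} (and its $L_N$-analogue) for compactness and lower semicontinuity, and note that the endpoint constraints survive the uniform convergence. The only superfluous part is your concern about $Y$ being an open cube: the paper defines $Y=[-\frac12,\frac12]^n$ as a closed cube, so $\tilde x-x,\tilde y-y\in Y$ passes to the limit directly and no closure/translation argument is needed.
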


\begin{proof}
We only prove (ii), since we can prove (i) similarly. 
Similar to Lemma \ref{lem:mb}, we can show that $m_{\C}(t,x,y)$ is bounded from above for each $t>0$ and $x,y\in\R^n$.
Take $\{(\eta_k,v_k, l_k)\}_{k\in\mathbb N}\subset \SP(x)$ with $\eta_k(t)=y$ so that 
\[
\lim_{k\to\infty}\int_0^t L_{\C}(\eta_k(s),-v_k(s),-l_k(s))\,ds=m_{\C}(t,x,y).
\]
Then,
\[\int_0^t L_{\C}(\eta_k(s),-v_k(s),-l_k(s))\,ds\]
is uniformly bounded from above. 
By the lower semi-continuity property, Lemma \ref{lem:lscc}, 
there is $(\eta,v, l)\in \SP(x)$ and a subsequence $\{(\eta_{k_j},v_{k_j}, l_{k_j})\}_{j\in\N}$ such that 
$\eta_{k_j}\to\eta$ uniformly on $[0,t]$ with $\eta(t)=y$, 
and $(\dot\eta_{k_j},v_{k_j}, l_{k_j})\to(\dot\eta,v, l)$ weakly in 
$L^1([0,t],\R^n)\times L^1([0,t],\R^n)\times L^1([0,t],\R)$ as $j\to\infty$, and
\[\int_0^t L_{\C}(\eta(s),-v(s),-l(s))\,ds\le m_{\C}(t,x,y).\]
By the definition of $m_{\C}(t,x,y)$, we obtain 
\[
\int_0^t 
L_{\C}(\eta(s),-v(s),-l(s))\, ds
=
m_{\C}(t,x,y).
\]

Next, we take $\tilde x_k,\tilde y_k\in\partial \Omega$ with $\tilde x_k-x\in Y$ and $\tilde y_k-y\in Y$ and $(\eta_k,v_k, l_k)\in\SP(\tilde x_k)$ 
with $\eta_k(t)=\tilde y_k$ such that
\[
m_{\C}(t,\tilde x_k,\tilde y_k)
=
\int_0^t L_{\C}(\eta_k(s),-v_k(s),-l(s))\,ds, 
\]
and
\[
\lim_{k\to\infty}m_{\C}(t,\tilde x_k,\tilde y_k)=m^*_{\C}(t,x,y).
\]
By Lemma \ref{lem:mb}, there is a constant $C>0$ independent of $k$ such that
\[\int_0^t 
L_{\C}(\eta_k(s),-v_k(s),-l_k(s))\,ds\le Ct.
\]
By the compactness, there is $(\eta,v, l)\in\SP(x)$ satisfying $\eta(0),\eta(t)\in\partial\Omega$ and a subsequence $\{(\eta_{k_j},v_{k_j}, l_{k_j})\}_{j\in\N}$ such that 
$\eta_{k_j}\to\eta$ uniformly $[0,t]$, and $(\dot\eta_{k_j},v_{k_j}, l_{k_j})\to(\dot\eta,v, l)$ weakly in $L^1([0,t],\R^n)\times L^1([0,t],\R^n)\times L^1([0,t],\R)$ as $j\to\infty$ and
\[
\int_0^t 
L_{\C}(\eta(s),-v(s),-l(s))\,ds\le m^*_{\C}(t,x,y).
\]
By the definition of $m^*_{\C}(t,x,y)$ we obtain 
\[
\int_0^t L_{\C}(\eta(s),-v(s),-l(s))\,ds=m^*_{\C}(t,x,y), 
\]
which completes the proof. 
\end{proof}

\subsection{Difference of metric functions and extended metric functions}
We first give a proof of a kind of subadditivity properties of $m_{\NN}^\ast$ and $m_{\C}^\ast$. 

\begin{prop}\label{prop:tri3}
Let $m^\ast$ be either $m^\ast_{\NN}$ or $m^\ast_{\C}$ defined by \eqref{func:mn-star} or \eqref{func:mc-star}, respectively. 
Let $M_0$ be the constant given by Proposition {\rm\ref{prop:extremal-main}}. 
There exists a constant $C>0$ such that 
for all $t,\tau>0$ with $t\ge  1$ or $\tau\ge  1$, and 
$x,y,z\in\mathbb R^n$ satisfying $|x-y|\le M_0t$ and $|y-z|\le M_0\tau$, 
we have 
\[
m^*(t+\tau,x,z)\le m^*(t,x,y)+m^*(\tau,y,z)+C.
\]
\end{prop}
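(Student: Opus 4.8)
The plan is to prove the subadditivity inequality by concatenating near-optimal trajectories for $m^*(t,x,y)$ and $m^*(\tau,y,z)$ while paying a bounded penalty coming from the fact that the endpoints of the first curve and the starting point of the second curve need not coincide --- they only lie within $Y$ of each other. By symmetry (and since the argument for $m^*_N$ and $m^*_C$ is the same up to the choice of modified Lagrangian), I will only treat $m^* = m^*_C$, and I may assume $t \geq 1$ (the case $\tau \geq 1$ is symmetric).

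First I would invoke Proposition \ref{prop:ex-m}(ii) to pick $\tilde x, \tilde y_1 \in \partial\Omega$ with $\tilde x - x \in Y$, $\tilde y_1 - y \in Y$, and $(\eta_1, v_1, l_1) \in \SP(\tilde x)$ with $\eta_1(t) = \tilde y_1$ realizing $m^*_C(t,x,y)$; similarly pick $\tilde y_2, \tilde z \in \partial\Omega$ with $\tilde y_2 - y \in Y$, $\tilde z - z \in Y$, and $(\eta_2, v_2, l_2) \in \SP(\tilde y_2)$ with $\eta_2(\tau) = \tilde z$ realizing $m^*_C(\tau, y, z)$. The endpoints $\tilde y_1$ and $\tilde y_2$ both lie on $\partial\Omega$ and both lie within $Y$ of $y$, so $|\tilde y_1 - \tilde y_2| \leq 2\sqrt{n}$. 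The idea is to build a competitor for $m^*_C(t+\tau, x, z)$ by following $\eta_1$ on $[0,t]$, then a short bridging curve of unit duration from $\tilde y_1$ to $\tilde y_2$ staying in $\ol\Omega$, then (a time-shifted copy of) $\eta_2$. Since $\eta_1(t), \eta_2(0) \in \partial\Omega$ and the points are a bounded distance apart, a bridging curve $\xi \colon [t, t+1] \to \ol\Omega$ with $\xi(t) = \tilde y_1$, $\xi(t+1) = \tilde y_2$ and $\|\dot\xi\|_{L^\infty} \leq C_0$ for a universal $C_0$ depending only on $\partial\Omega$ and $n$ exists (this is exactly the kind of construction behind Lemma \ref{HJMT:prop-2.2}, applied with $\delta = 1$ and $|\tilde y_1 - \tilde y_2| \leq 2\sqrt n$). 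Taking the control triple $(\xi, \dot\xi, 0)$, which lies in $\SP(\tilde y_1)$, the cost of this bridging segment is $\int_t^{t+1} L_C(\xi(s), -\dot\xi(s), 0)\,ds \leq (\tfrac12 C_0^2 + K_0)\cdot 1 =: C_1$, using the upper bound \eqref{v-} (or directly \eqref{eq:K_0H}). Here is the one subtlety: this gluing produces a curve of total duration $t + 1 + \tau$, not $t + \tau$. To fix the time budget I would instead reparametrize --- replace the bridging curve with one defined on $[t, t+1]$ and then speed-reparametrize $\eta_2$ to run on $[t+1, t+\tau]$ rather than building from scratch; alternatively, and more cleanly, absorb the extra unit of time by noting that one may simply start the whole construction from $m^*_C(t-1, x, y')$-type estimates --- but the simplest fix is to let the bridging curve occupy $[t, t+1]$ and accept total time $t + 1 + \tau$, then observe via Lemma \ref{lem:mb} and the Lipschitz-type a priori bounds that $|m^*_C(t+1+\tau, x, z) - m^*_C(t+\tau, x, z)| \leq C_2$ for a constant $C_2$ independent of everything (this last step uses that $m^*_C(s,x,z)$ is comparable to a linear-in-$s$ quantity with controlled constants, again from \eqref{eq:K_0H} and Lemma \ref{lem:mb}).

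Assembling: the concatenated triple is an admissible competitor for $m^*_C(t+1+\tau, x, z)$ with starting point $\tilde x$ (within $Y$ of $x$) and endpoint $\tilde z$ (within $Y$ of $z$), so by the definition \eqref{func:mc-star},
\[
m^*_C(t+1+\tau, x, z) \leq m^*_C(t,x,y) + C_1 + m^*_C(\tau, y, z),
\]
and then combining with $m^*_C(t+\tau, x, z) \leq m^*_C(t+1+\tau, x, z) + C_2$ gives the claim with $C = C_1 + C_2$. (One checks that the concatenation of Skorokhod solutions is again a Skorokhod solution after the obvious time shift, using that at the gluing times $t$ and $t+1$ the curve sits on $\partial\Omega$ so there is no compatibility obstruction; this is routine.)

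The main obstacle is the bookkeeping around the duration mismatch: a naive gluing costs an extra unit of time, and one must argue that changing the time argument of $m^*_C$ by $1$ changes its value by at most a universal constant. This requires the a priori two-sided linear bounds on $m^*_C$ with constants uniform in the relevant parameters, which are available from \eqref{eq:K_0H}, Lemma \ref{lem:mb}, and the coercivity/boundedness of $L_C$; one should also verify the constraint $|x - z| \leq M_0(t+\tau)$ (or the slightly weaker one needed) so that Lemma \ref{lem:mb} applies to $m^*_C(t+\tau, x, z)$ --- this follows from $|x-y| \leq M_0 t$, $|y - z| \leq M_0 \tau$ and the triangle inequality. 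Everything else is a direct concatenation argument. I expect the proof to run two to three short paragraphs once the bridging-curve lemma (essentially Lemma \ref{HJMT:prop-2.2} with $\delta = 1$) is quoted.
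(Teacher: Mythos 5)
Your construction of the concatenated competitor (minimizers from Proposition \ref{prop:ex-m}, a bridge between the two boundary points within $2\sqrt n$ of $y$ built from Lemma \ref{HJMT:prop-2.2}, cost of the bridge bounded via \eqref{v-}) is fine and matches the first half of the paper's argument. The genuine gap is the step you use to repair the time budget: the claim that $m^*_C(t+\tau,x,z)\le m^*_C(t+1+\tau,x,z)+C_2$ with $C_2$ independent of $t,\tau$ does \emph{not} follow from the two-sided linear bounds you cite. Lemma \ref{lem:mb} together with \eqref{eq:K_0H} only gives $-K_1 s\le m^*_C(s,x,z)\le Cs$, and from such bounds the increment in $s$ is a priori only $O(t+\tau)$, not $O(1)$. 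Nor does a naive uniform reparametrization of the long curve settle it: speeding up by the factor $\lambda=\tfrac{t+\tau+1}{t+\tau}$ and estimating via the crude sandwich $\tfrac{(|v|+hl)^2}{2}-K_0\le L_C\le\tfrac{(|v|+hl)^2}{2}+K_0$ produces an error term of size $K_0\,(t+\tau)$, which again is not bounded. A correct treatment of this point either needs a careful global rescaling estimate that exploits the constraint $|x-z|\le M_0(t+\tau)$ (so that the total ``energy'' is $O(t+\tau)$ and the factor $\lambda-1=O(1/(t+\tau))$ kills it), or --- as the paper does --- a \emph{localized} compression: by a pigeonhole argument using $m^*\le Ct$ and the lower bound on $L$, one finds a quarter-interval $[d,d+\tfrac14]\subset[0,t]$ on which the running cost of $\eta_1$ is at most a universal constant $M$ (inequality \eqref{c13}); running $\eta_1$ at double speed on that interval saves $\tfrac18$ of time at an extra cost of at most $2M+\tfrac{5K_0}{8}$ (the estimate \eqref{intd}, which also has to handle the doubling of $l_1$), and the saved $\tfrac18$ is exactly where the bridge is inserted, so the total duration stays $t+\tau$ and no time-shaving lemma is ever needed. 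This pigeonhole-plus-speed-up step is the heart of the proposition; deferring it to an unproven ``bounded increment in time'' claim leaves the main difficulty unaddressed.

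A secondary remark: your fallback suggestion of uniformly compressing $\eta_2$ onto $[t+1,t+\tau]$ fails outright when $\tau$ is small (only one of $t,\tau$ is assumed $\ge1$), and even when $\tau$ is large it runs into the same uniform-error issue just described unless one does the finer derivative-in-$\lambda$ estimate. If you want to keep your structure, prove the time-shaving statement honestly (pigeonhole a cheap subinterval of length $\tfrac14$ inside the long piece and compress it to length $\tfrac18$, exactly as in \eqref{c13}--\eqref{intd}); at that point your proof becomes essentially the paper's.
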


\begin{proof}
We first consider the case where $m^\ast=m^\ast_{\C}$.  
Without loss of generality, we can assume $t\ge  1$. 
By Proposition \ref{prop:ex-m}, there are 
$\tilde{x}, \tilde{y}, \tilde{y}', \tilde{z}\in\partial\Omega$ satisfying 
$\tilde x-x, \tilde y-y, \tilde{y}'-y, \tilde{z}-z\in Y$, and  
\[
(\eta_1,v_1, l_1)\in\SP(\tilde{x}) \  \text{with} \  \eta_1(t)=\tilde y, 
\quad  
(\eta_2,v_2, l_2)\in\SP(\tilde{y}') \ \text{with} \ \eta_2(\tau)=\tilde z
\] 
such that 
\begin{align*}
&m^*_{\C}(t,x,y)=\int_0^tL_{\C}(\eta_1(s),-v_1(s),- l_1(s))\,ds, \\
&m^*_{\C}(\tau,y,z)=\int_0^\tau L_{\C}(\eta_2(s),-v_2(s),- l_2(s))\,ds. 
\end{align*}
By the same argument in the proof of \cite[Proposition 2.3]{HJMT}, 
we can prove that 
there exists $d\in\{0,\frac{1}{4},\frac{1}{2},\frac{3}{4},\dots,\lfloor t\rfloor-\frac{1}{4}\}$ such that
\begin{equation}\label{c13}
  \int_d^{d+\frac{1}{4}}
  L_{\C}(\eta_1(s),-v_1(s),- l_1(s))ds\le 
M
\end{equation}
for some $M>0$ which only depends on $M_\Omega, M_0, K_0, n$. 
Here, we denote by $\lfloor r\rfloor$ the greatest integer less than or equal to $r\in\R$.
By Lemma \ref{HJMT:prop-2.2}, we can find a path $\xi:[0,1]\to\overline \Omega$ such that $\xi(0)=\tilde y$ and $\xi(1)=\tilde y'$ with 
$\|\dot{\xi}\|_{\Li([0,t])}\le C$ for some $C>0$, 
since $|\tilde y-\tilde y'|\le 2\sqrt{n}$.

Set
\begin{equation*}
(\eta_3(s), l_3(s)):=
\begin{cases}
(\eta_1(s), l_1(s)) &\text{for} \ 0\le s\le d,
\\ (\eta_1(2(s-d)+d),2 l_1(2(s-d)+d)) & \text{for} \ d\le s\le d+\frac{1}{8},
\\ (\eta_1(s+\frac{1}{8}), l_1(s+\frac{1}{8})) &\text{for} \ d+\frac{1}{8}\le s\le t-\frac{1}{8},
\\ (\xi(8(s-t+\frac{1}{8})),0) &\text{for} \ t-\frac{1}{8}\le s\le t,
\\ (\eta_2(s-t), l_2(s-t)) &\text{for} \ t\le s\le t+\tau.
\end{cases}
\end{equation*}
Letting 
$v_3(s):=\dot{\eta}_3(s)+ l_3(s)\nu(\eta_3(s))$, 
we can easily see that 
\[
(\eta_3,v_3, l_3)\in \SP(\tilde{x}) \quad\text{with} \quad \eta_4(t+\tau)=\tilde{z}. 
\]
Therefore,
\[
m^*_{\C}(t+\tau,x,z)\le \int_0^{t+\tau}L_{\C}(\eta_3(s),-v_3(s),- l_3(s))\,ds,
\]
that is, 
\begin{align*}
m^*_{\C}(t+\tau,x,z)\le& \int_0^dL_{\C}(\eta_1(s),-v_1(s),- l_1(s)) \,ds
\\ 
&+\int_d^{d+\frac{1}{8}}L_{\C}(\eta_1(2(s-d)+d),-2v_1(2(s-d)+d),-2 l_1(2(s-d)+d))\,ds
\\ 
&+\int_{d+\frac{1}{8}}^{t-\frac{1}{8}}
L_{\C}\left(\eta_1\left(s+\frac{1}{8}\right),-v_1\left(s+\frac{1}{8}\right),- l_1\left(s+\frac{1}{8}\right)\right)\,ds
\\ 
&+\int_{t-\frac{1}{8}}^t
L_{\C}\left(\xi\left(8\left(s-t+\frac{1}{8}\right)\right),-8\dot\xi\left(8\left(s-t+\frac{1}{8}\right)\right),0\right)\,ds
\\ 
&+\int_t^{t+\tau}L_{\C}(\eta_2(s-t),-v_2(s-t),- l_2(s-t))\,ds. 
\end{align*}
Note that, by \eqref{v+l2}, 
\begin{equation}\label{int0d}
\begin{aligned}
&\int_0^dL_{\C}(\eta_1(s),-v_1(s),- l_1(s))\,ds
\\ 
&+\int_{d+\frac{1}{8}}^{t-\frac{1}{8}}L_{\C}\left(\eta_1\left(s+\frac{1}{8}\right),-v_1\left(s+\frac{1}{8}\right),- l_1\left(s+\frac{1}{8}\right)\right) \,ds
\\ 
&=\int_0^dL_{\C}(\eta_1(s),-v_1(s),- l_1(s))\,ds
+\int_{d+\frac{1}{4}}^tL_{\C}(\eta_1(s),-v_1(s),- l_1(s))\,ds
\\ 
&=\int_0^tL_{\C}(\eta_1(s),-v_1(s),- l_1(s))\,ds
-\int_d^{d+\frac{1}{4}}L_{\C}(\eta_1(s),-v_1(s),- l_1(s))\,ds 
\\ &\le m^*(t,x,y)+\frac{K_0}{4}. 
\end{aligned}
\end{equation}
Also, by \eqref{v-}, and \eqref{v+l2} once again, 
\begin{align}
&\int_d^{d+\frac{1}{8}}
L_{\C}(\eta_1(2(s-d)+d),-2v_1(2(s-d)+d),-2 l_1(2(s-d)+d))\,ds
\label{intd}\\
&
=\frac{1}{2}\int_d^{d+\frac{1}{4}}L_{\C}(\eta_1(s),-2v_1(s),-2 l_1(s))\,ds
\nonumber\\
&\le \frac{1}{2}\int_d^{d+\frac{1}{4}}\Big[\frac{1}{2}(2|v_1(s)|+2h(\eta_1(s)) l_1(s))^2+K_0\Big]\,ds
\nonumber\\ &=\int_d^{d+\frac{1}{4}}(|v_1(s)|+h(\eta_1(s)) l_1(s))^2ds+\frac{K_0}{8}
\nonumber\\ &\le 2\int_d^{d+\frac{1}{4}}L_{\C}(\eta_1(s),-v_1(s),- l_1(s))\,ds+\frac{5K_0}{8}\le 
2M+\frac{5K_0}{8}. \nonumber
\end{align}
Moreover, we have
\begin{align*}
&\int_{t-\frac{1}{8}}^tL_{\C}\left(\xi\left(8\left(s-t+\frac{1}{8}\right)\right),-8\dot\xi\left(8\left(s-t+\frac{1}{8}\right)\right),0\right)\,ds
\\ &=\frac{1}{8}\int_0^1L_{\C}(\xi(s),-8\dot\xi(s),0)\,ds
\le 
4\int_0^1 |\dot\xi(s)|^2ds+\frac{K_0}{8}
\le 4C^2+\frac{K_0}{8},
\end{align*}
and
\[
\int_t^{t+\tau}L_{\C}(\eta_2(s-t),-v_2(s-t),- l_2(s-t))ds=m^*_{\C}(\tau,y,z).
\]
Combining all inequalities above, we get that there exists $C>0$ such that
\[
m^*(t+\tau,x,z)\le m^*(t,x,y)+m^*(\tau,y,z)+C, 
\]
which completes the proof. 

Now, we consider the case where $m^*=m^*_{\NN}$. The argument is quite similar to the case where $m^*=m^*_{\C}$, so we only point out different parts. By \eqref{K1}, $L_{\NN}$ is bounded from below by $-K_1$. We then obtain \eqref{c13} by using a similar argument to the above, where $M>0$ depends on $K_1$ instead of $K_0$. In \eqref{int0d}, we get $m^*(t,x,y)+\frac{K_1}{4}$ in the last inequality by \eqref{K1} again. The main difference is in the estimate of \eqref{intd}. Note that
\[\int_d^{d+\frac{1}{4}}|v_1(s)|\, ds\le \frac{1}{2}\bigg(\int_d^{d+\frac{1}{4}}|v_1(s)|^2\, ds\bigg)^{1/2}.\]
From \eqref{c13} and Lemma \ref{lem:vl} we know that there is $C>0$ such that
\begin{align*}
M&\ge \int_d^{d+\frac{1}{4}}\bigg[\frac{|v_1(s)|^2}{2}-K_0-C\|g\|_{L^\infty(\R^n)}|v_1(s)|\bigg]\, ds
\\ &\geqslant \frac{1}{2}\int_d^{d+\frac{1}{4}}|v_1(s)|^2\, ds-\frac{K_0}{4}-\frac{1}{2}C\|g\|_{L^\infty(\R^n)} \bigg(\int_d^{d+\frac{1}{4}}|v_1(s)|^2\, ds\bigg)^{1/2},
\end{align*}
which implies that both
\[\int_d^{d+\frac{1}{4}}|v_1(s)|^2\, ds\]
and
\[\int_d^{d+\frac{1}{4}}l_1(s)\, ds\leqslant \int_d^{d+\frac{1}{4}}C|v_1(s)|\, ds\leqslant \frac{C}{2}\bigg(\int_d^{d+\frac{1}{4}}|v_1(s)|^2\, ds\bigg)^{1/2}\]
are bounded. Therefore,
\begin{align*}
&\int_d^{d+\frac{1}{8}}
L_{\NN}(\eta_1(2(s-d)+d),-2v_1(2(s-d)+d),-2 l_1(2(s-d)+d))\,ds
\\ &=\frac{1}{2}\int_d^{d+\frac{1}{4}}L_{\NN}(\eta_1(s),-2v_1(s),-2\ell_1(s))\, ds
\\ &\le \frac{1}{2}\int_d^{d+\frac{1}{4}}\bigg[\frac{(2|v_1(s)|)^2}{2}+K_0+2g(\eta_1(s))\ell_1(s)\bigg]\, ds
\end{align*}
is bounded.
\end{proof}

\begin{prop}\label{prop:difference-mm}
Let $m$ be either $m_{\NN}$ or $m_{\C}$ defined by \eqref{func:mn} or \eqref{func:mc}, respectively, 
and $m^\ast$ be either $m^\ast_{\NN}$ or $m^\ast_{\C}$ defined by \eqref{func:mn-star} or \eqref{func:mc-star}, respectively. 
Let $M_0$ be the constant given by Proposition {\rm\ref{prop:extremal-main}}. 
There exists a constant $C>0$ such that for all $t\ge  1$ and $x,y\in\ol \Omega$ with $|x-y|\le M_0t$,
\[
|m^*(t,x,y)-m(t,x,y)|\le C.
\]
\end{prop}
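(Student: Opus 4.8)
The plan is to obtain the two-sided bound by proving the two inequalities $m^{*}(t,x,y)\le m(t,x,y)+C$ and $m(t,x,y)\le m^{*}(t,x,y)+C$ separately; both rest on the gluing-and-reparametrization scheme already used in the proof of Proposition \ref{prop:tri3}. For the first one, by Proposition \ref{prop:ex-m} there is an optimal triple $(\eta_{1},v_{1},l_{1})\in\SP(x)$ with $\eta_{1}(t)=y$ realizing $m(t,x,y)$. Using that $\Omega$ is periodic with $C^{1}$ boundary and perforated, pick $\hat x,\hat y\in\partial\Omega$ with $\hat x-x\in Y$ and $\hat y-y\in Y$; since $\overline{\Omega}$ is connected and its geometry is $\Z^{n}$-periodic, there is $R_{0}=R_{0}(\Omega,n)$ such that any two points of $\overline{\Omega}$ at mutual distance at most $\sqrt{n}$ can be joined in $\overline{\Omega}$ by an absolutely continuous curve of length $\le R_{0}$ (this is the kind of local connectedness already underlying Lemma \ref{HJMT:prop-2.2}). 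The idea is then to construct a competitor for $m(t,\hat x,\hat y)$, hence an upper bound for $m^{*}(t,x,y)$, made of the $\hat x\to x$ bridge on $[0,\tfrac1{16}]$, the curve $\eta_{1}$ suitably compressed in time so as to occupy $[\tfrac1{16},t-\tfrac1{16}]$, and the $y\to\hat y$ bridge on $[t-\tfrac1{16},t]$, so that the total time stays $t$.

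To free the time $\tfrac18$ needed for the two bridges I would argue as in Proposition \ref{prop:tri3}: since $t\ge1$, the Lagrangian is bounded below (by $-K_{0}$ for $L_{C}$, see \eqref{v+l2}, and by a constant $-K_{1}$ for $L_{N}$, see \eqref{K1}) and $m(t,x,y)\le Ct$ by the argument of Lemma \ref{lem:mb} (connecting $x$ to $y$ directly inside $\overline{\Omega}$ by a curve of bounded speed, which is where $|x-y|\le M_{0}t$ enters), so a pigeonhole argument yields an interval $[d,d+\tfrac14]\subset[0,\lfloor t\rfloor]$ on which $\int_{d}^{d+1/4}L(\eta_{1},-v_{1},-l_{1})\,ds\le M$ for a constant $M$ depending only on $M_{0},M_{\Omega},K_{0},n$. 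Compressing $\eta_{1}$ on $[d,d+\tfrac14]$ onto $[d,d+\tfrac18]$ via $s\mapsto\eta_{1}(2(s-d)+d)$, with the control rescaled by $2$ exactly as in \eqref{intd}, frees the required time; by \eqref{v-} and \eqref{v+l2} the action of this compressed piece is at most $2M+\tfrac{5K_{0}}{8}$, and the two bridges, being of length $\le R_{0}$ and traversed in time $\tfrac1{16}$ with $l\equiv0$, contribute at most a constant since $L\le\tfrac12|v|^{2}+K_{0}$ by \eqref{eq:K_0H}. Concatenating the three pieces produces an element of $\SP(\hat x)$ reaching $\hat y$ at time $t$ with action $\le m(t,x,y)+C$, which gives the first inequality. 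The reverse inequality $m(t,x,y)\le m^{*}(t,x,y)+C$ is proved by the same construction run in the other direction: by Proposition \ref{prop:ex-m} take $\hat x,\hat y\in\partial\Omega$ near $x,y$ and an optimal triple realizing $m^{*}(t,x,y)=m(t,\hat x,\hat y)$, prepend and append the bridges $x\to\hat x$ and $\hat y\to y$ and compress as above, now using $m^{*}(t,x,y)\le Ct$ from Lemma \ref{lem:mb} for the pigeonhole step.

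For $m=m_{N}$ one extra care is needed in the compression step, because the linear term $-g(\eta)l$ in $L_{N}$ is not immediately controlled by the bound on $\int_{d}^{d+1/4}L_{N}$: as at the end of the proof of Proposition \ref{prop:tri3}, I would combine \eqref{K1} with Cauchy--Schwarz and Young's inequality to first deduce that $\int_{d}^{d+1/4}|v_{1}|^{2}\,ds$ is bounded, hence also $\int_{d}^{d+1/4}l_{1}\,ds\le C\int_{d}^{d+1/4}|v_{1}|\,ds$ by Lemma \ref{lem:vl}, and only then estimate the action of the rescaled curve.

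I expect the main obstacle to be the bookkeeping in the reparametrization and gluing step: one must check that a time-rescaling $s\mapsto\eta(\lambda s)$ sends a solution of the Skorokhod problem \eqref{eq:Sk} to another one with $v$ and $l$ both rescaled by $\lambda$, that the three pieces glue into a genuine element of $\SP(\hat x)$ (continuity of $\eta$ at the junctions, $l\ge0$ a.e., and $l=0$ on $\Omega_{\ep}$ a.e.\ are all preserved), and --- most delicately --- that after the speed-up the reflection contributions $h(\eta)l|p|$ in $L_{C}$, respectively $g(\eta)l$ in $L_{N}$, stay under control; this is precisely where one leans on the quadratic two-sided bounds \eqref{v+l2}--\eqref{v-} and, in the oblique case, on the estimate $l\le C|v|$ from Lemma \ref{lem:vl}.
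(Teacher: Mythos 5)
Your proposal is correct and follows essentially the same route as the paper: one inequality at a time, an optimal triple from Proposition \ref{prop:ex-m}, the pigeonhole selection of an interval $[d,d+\tfrac14]$ with bounded action (using $m\le Ct$ via Lemma \ref{HJMT:prop-2.2} and the lower bounds \eqref{v+l2}, \eqref{K1}), a factor-2 time compression of that piece to free $\tfrac18$ of time for two boundary bridges of duration $\tfrac1{16}$ each, with the compressed piece controlled by \eqref{v-}--\eqref{v+l2} (and, for $L_N$, the extra Cauchy--Schwarz step with Lemma \ref{lem:vl} exactly as at the end of the proof of Proposition \ref{prop:tri3}). The only differences are cosmetic bookkeeping (where the compressed subinterval sits) and your welcome explicit remark that the rescaled and glued triple must be checked to remain in $\SP$, which the paper leaves implicit.
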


\begin{proof}
We only give a proof for $m_{\C}$, $m^\ast_{\C}$ since we can similarly prove for $m_{\NN}$, $m^\ast_{\NN}$. 
We prove 
\[
m_{\C}^*(t,x,y)\le m_{\C}(t,x,y)+C. 
\]
Symmetrically, we can prove the other inequality. 

By Proposition \ref{prop:ex-m}, there is $(\eta_1,v_1, l_1)\in\SP(x)$ with $\eta_1(t)=y$ 
such that 
\[
m_{\C}(t,x,y)=\int_0^t L_{\C}(\eta_1(s),-v_1(s),- l_1(s))\,ds.
\]
Let $t\ge  1$ and $x,y\in\ol \Omega$ with $|x-y|\le M_0t$. 
By using Lemma \ref{HJMT:prop-2.2}, 
there is a path $\xi_1:[0,t]\to\overline\Omega$ connecting $x$ and $y$ with $\|\dot\xi_1\|_{\Li([0,t])}\le M$ for some $M>0$. 
Also, note that $(\xi_1,\dot\xi_1,0)\in\SP(x)$ with $\xi_1(t)=y$,  
which implies 
\[
m_{\C}(t,x,y)=\int_0^t L_{\C}(\eta_1(s),-v_1(s),- l_1(s))\,ds
\le \int_0^t
L_{\C}(\xi_1(s),\dot\xi_1(s),0)\,ds
\le \Big(\frac{M^2}{2}+K_0\Big)t 
\]
by \eqref{v-}. 
As in the proof of Proposition \ref{prop:tri3}, 
there exists $d\in\{0,\frac{1}{4},\frac{1}{2},\frac{3}{4},\dots,\lfloor t\rfloor-\frac{1}{4}\}$ such that \eqref{c13} holds. 
Take any $\tilde x,\tilde y\in\ol \Omega$ such that $\tilde x-x\in Y$ and $\tilde y-y\in Y$. 
Since $|\tilde x-x|\le \sqrt{n}$, $|\tilde y-y|\le \sqrt{n}$ and $t\ge  1$, 
there is a path $\xi_2:[0,1]\to\ol \Omega$ connecting $\tilde x$ to $x$, and a path $\xi_3:[0,1]\to\ol \Omega$ connecting $\tilde y$ to $y$ 
with 
$\|\dot\xi_2\|_{L^\infty([0,1])}+\|\dot\xi_3\|_{L^\infty([0,1])}\le C$. 

Define
\begin{equation*}
(\eta_2(s), l_2(s))
:=
\begin{cases}
(\xi_2(16s),0)& \text{for} \ 0\le s\le \frac{1}{16}
\\  
(\eta_1(s-\frac{1}{16}), l_1(s-\frac{1}{16}))
& \text{for} \ \frac{1}{16}\le s\le d+\frac{1}{16},
\\ 
(\eta_1(2(s-d-\frac{1}{16})+d),2 l_1(2(s-d-\frac{1}{16})+d) 
& 
\text{for} \ d+\frac{1}{16}\le s\le d+\frac{3}{16},
\\ 
(\eta_1(s+\frac{1}{16}), l_1(s+\frac{1}{16})) &
 \text{for} \ d+\frac{3}{16}\le s\le t-\frac{1}{16},
\\ 
(\xi_3(16(s-t+\frac{1}{16})),0) & \text{for} \ t-\frac{1}{16}\le s\le t.
\end{cases}
\end{equation*}
Then, 
$(\eta_2,v_2, l_2)$ is an admissible control of $m_{\C}^*(t,x,y)$. 
Therefore,
\begin{align*}
&\, m_{\C}^*(t,x,y)\\
\le 
&\, 
\int_{0}^t L_{\C}(\eta_2(s), -v_2(s), -l_2(s))\, ds\\
=&\, 
\int_0^{\frac{1}{16}}L_{\C}(\xi_2(16s),-16\dot\xi_2(16s),0)\,ds
\\ &
+\int_{\frac{1}{16}}^{d+\frac{1}{16}}
L_{\C}\left(\eta_1\left(s-\frac{1}{16}\right),-v_1\left(s-\frac{1}{16}\right),- l_1\left(s-\frac{1}{16}\right)\right)\,ds
\\ &
+\int_{d+\frac{1}{16}}^{d+\frac{3}{16}}
L_{\C}\bigg(\eta_1\left(2\left(s-d-\frac{1}{16}\right)+d\right),-2v_1\left(2\left(s-d-\frac{1}{16}\right)+d\right),
\\ &\hspace{200pt} -2 l_1\left(2\left(s-d-\frac{1}{16}\right)+d\right)\bigg)\,ds
\\ &
+\int_{d+\frac{3}{16}}^{t-\frac{1}{16}}
L_{\C}\left(\eta_1\left(s+\frac{1}{16}\right),-v_1\left(s+\frac{1}{16}\right),- l_1\left(s+\frac{1}{16}\right)\right)\,ds
\\ &
+\int_{t-\frac{1}{16}}^t 
L_{\C}\left(\xi_3\left(16\left(s-t+\frac{1}{16}\right)\right),-16\dot\xi_3\left(16\left(s-t+\frac{1}{16}\right)\right),0\right)\,ds.
\end{align*}
Here, 
by repeating an argument in the proof of Proposition \ref{prop:tri3}, we get
\begin{align*}
&\int_{\frac{1}{16}}^{d+\frac{1}{16}}
L_{\C}\left(\eta_1\left(s-\frac{1}{16}\right),-v_1\left(s-\frac{1}{16}\right),- l_1\left(s-\frac{1}{16}\right)\right)\,ds
\\ &\quad 
+\int_{d+\frac{3}{16}}^{t-\frac{1}{16}}
L_{\C}\left(\eta_1\left(s+\frac{1}{16}\right),-v_1\left(s+\frac{1}{16}\right),- l_1\left(s+\frac{1}{16}\right)\right)\,ds 
\\ &\le m_{\C}(t,x,y)
-\int_d^{d+\frac{1}{4}}
L_{\C}(\eta_1(s),-v_1(s),- l_1(s))\,ds
\le m_{\C}(t,x,y)+\frac{K_0}{4}, 
\end{align*}
and
\begin{align*}
&\int_{d+\frac{1}{16}}^{d+\frac{3}{16}}
L_{\C}\bigg(\eta_1\left(2\left(s-d-\frac{1}{16}\right)+d\right),-2v_1\left(2\left(s-d-\frac{1}{16}\right)+d\right),
\\ &\hspace{200pt} -2 l_1\left(2\left(s-d-\frac{1}{16}\right)+d\right)\bigg)\,ds
\\ &
=\frac{1}{2}\int_d^{d+\frac{1}{4}}
L_{\C}\left(\eta_1(s),-2v_1(s),-2 l_1(s)\right)\,ds\le 2M+\frac{5K_0}{8}, 
\end{align*}
and
\begin{multline*}
\int_0^{\frac{1}{16}}
L_{\C}(\eta_2(16s),-16\dot\eta_2(16s),0)\, ds\\
+\int_{t-\frac{1}{16}}^t 
L_{\C}\left(\eta_3\left(16\left(s-t+\frac{1}{16}\right)\right),-16\dot\eta_3\left(16\left(s-t+\frac{1}{16}\right)\right),0\right)\,ds
\le 16C^2+\frac{K_0}{8}. 
\end{multline*}
Combining all inequalities above, we get that there exists $C>0$ such that
\[
m^*_{\C}(t,x,y)\le m_{\C}(t,x,y)+C, 
\]
which completes the proof. 
\end{proof}

\subsection{Subadditivity and Superadditivity}\label{subsec:sa}

\begin{lem}\label{lem:subad}
Let $m^\ast$ be either $m^\ast_{\NN}$ or $m^\ast_{\C}$ defined by \eqref{func:mn-star} or \eqref{func:mc-star}, respectively.  
Let $M_0$ be the constant given by Proposition {\rm\ref{prop:extremal-main}}. 
There exists $C\ge 0$ such that for $t\ge  1$, $y\in\R^n$ with $|y|\le M_0t$, we have
\[
m^*(2t,0,2y)\le 2m^*(t,0,y)+C.
\]
\end{lem}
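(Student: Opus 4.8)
The plan is to build an admissible competitor for $m^*(2t,0,2y)$ by concatenating two copies of a minimizer for $m^*(t,0,y)$, the second of them translated by an integer vector so that periodicity leaves its running cost unchanged, joined by a short connecting path; the small time surplus produced by the connector is then absorbed using the ``good subinterval'' compression trick from the proof of Proposition \ref{prop:tri3}. I describe $m^*=m^*_C$; the case $m^*=m^*_N$ differs only in two estimates, flagged at the end.

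By Proposition \ref{prop:ex-m}(ii) there are $\tilde x_0\in\partial\Omega\cap Y$ and $\tilde y_0\in\partial\Omega$ with $\tilde y_0-y\in Y$, and a minimizer $(\eta_1,v_1,l_1)\in\SP(\tilde x_0)$ with $\eta_1(t)=\tilde y_0$ realizing $m^*_C(t,0,y)$. Choose $z\in\Z^n$ with $2y-\tilde y_0-z\in Y$. Using the $\Z^n$-periodicity of the data --- of $\Omega$ and $\gamma$, hence of $\nu$ and of $L_C$, and of $H$ and $\cos\theta$ --- the translated triple $(\eta_1+z,v_1,l_1)$ lies in $\SP(\tilde x_0+z)$, ends at $\tilde y_0+z$, and has the same running cost \emph{pointwise}, so it still realizes $m^*_C(t,0,y)$. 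Since $|\tilde x_0|,\,|\tilde y_0-y|,\,|z-(2y-\tilde y_0)|\le\tfrac{\sqrt n}{2}$, one gets $|\tilde y_0-(\tilde x_0+z)|\le 2\sqrt n$, so, as in the proof of Proposition \ref{prop:tri3} (via Lemma \ref{HJMT:prop-2.2}), there is a path $\xi\colon[0,1]\to\ol\Omega$ with $\xi(0)=\tilde y_0$, $\xi(1)=\tilde x_0+z$ and $\|\dot\xi\|_{\Li([0,1])}\le C$.

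Since $|0-y|\le M_0t$ and $t\ge 1$, Lemma \ref{lem:mb} gives $m^*_C(t,0,y)\le C_1t$, so, exactly as in the proof of Proposition \ref{prop:tri3}, there is $d\in\{0,\tfrac14,\dots,\lfloor t\rfloor-\tfrac14\}$ with $\int_d^{d+1/4}L_C(\eta_1,-v_1,-l_1)\,ds\le M$ as in \eqref{c13}; by periodicity the same $d$ works for $\eta_1+z$. I then define $(\eta,v,l)$ on $[0,2t]$ by: running $\eta_1$ on $[0,d]$, the time-halved copy of $\eta_1|_{[d,d+1/4]}$ (with $v,l$ scaled by $2$) on $[d,d+\tfrac18]$, $\eta_1(\cdot+\tfrac18)$ on $[d+\tfrac18,t-\tfrac18]$; then the factor-$4$ acceleration of $\xi$ (with $l=0$) on $[t-\tfrac18,t+\tfrac18]$; and finally, shifted so as to begin at time $t+\tfrac18$, the same compressed construction applied to $\eta_1+z$, which occupies an interval of length $t-\tfrac18$ and ends at $\tilde y_0+z$. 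The total length is $(t-\tfrac18)+\tfrac14+(t-\tfrac18)=2t$, the junctions match, and $(\eta,v,l)\in\SP(\tilde x_0)$ with $\tilde x_0\in\partial\Omega\cap Y$ and $\eta(2t)=\tilde y_0+z\in\partial\Omega\cap(2y+Y)$, so $(\eta,v,l)$ is admissible for $m^*_C(2t,0,2y)$.

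For the cost, the two uncompressed portions of each copy together contribute $\int_0^t L_C(\eta_1,-v_1,-l_1)\,ds-\int_d^{d+1/4}L_C(\eta_1,-v_1,-l_1)\,ds\le m^*_C(t,0,y)+\tfrac{K_0}{4}$ by $L_C\ge -K_0$ (from \eqref{v+l2}); by \eqref{v-}, \eqref{v+l2} and $l_1\le|v_1|$ (Lemma \ref{lem:vl}) one has $L_C(x,-2v_1,-2l_1)\le 4L_C(x,-v_1,-l_1)+5K_0$, so each compressed piece contributes at most $2M+\tfrac{5K_0}{8}$; and the accelerated connector contributes at most a constant since $L_C(x,-4\dot\xi,0)\le 8|\dot\xi|^2+K_0$ by \eqref{v-}. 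Summing gives $m^*_C(2t,0,2y)\le 2m^*_C(t,0,y)+C$. For $m^*=m^*_N$ everything is the same, except that $K_0$ is replaced by the constant $K_1$ of \eqref{K1} in the uncompressed portions, and bounding $\int_d^{d+1/4}L_N(\eta_1,-2v_1,-2l_1)\,ds$ now requires controlling $\int_d^{d+1/4}l_1\,ds$: combining $l_1\le C|v_1|$ (Lemma \ref{lem:vl}), the Cauchy--Schwarz bound $\int_d^{d+1/4}|v_1|\,ds\le\tfrac12\big(\int_d^{d+1/4}|v_1|^2\,ds\big)^{1/2}$ and $\int_d^{d+1/4}L_N(\eta_1,-v_1,-l_1)\,ds\le M$ shows $\int_d^{d+1/4}|v_1|^2\,ds$, hence $\int_d^{d+1/4}l_1\,ds$, is bounded --- the computation at the end of the proof of Proposition \ref{prop:tri3}. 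I expect this $m^*_N$ compressed-interval estimate, together with the bookkeeping that makes the pieces fit exactly into $[0,2t]$ and the verification that translation by $z\in\Z^n$ preserves both membership in $\SP$ and the running cost (where $\Z^n$-periodicity of $\gamma$ enters), to be the only delicate parts.
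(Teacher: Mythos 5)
Your construction is correct and is essentially the paper's argument: the same ingredients appear --- integer translation of a minimizer of $m^*(t,0,y)$ (using periodicity of $\Omega$, $H$, $h$, $g$, $\gamma$), a bounded-speed connector from Lemma \ref{HJMT:prop-2.2}, and the good-interval compression with the estimates \eqref{v+l2}, \eqref{v-}, \eqref{K1} --- the only organizational difference being that you glue the two copies directly into one admissible competitor on $[0,2t]$, whereas the paper first applies Proposition \ref{prop:tri3} and then proves $m^*(t,y,2y)\le m^*(t,0,y)+C$ by exactly the same translation-plus-surgery. Your treatment of the $m^*_N$ case (replacing $K_0$ by $K_1$ and controlling $\int_d^{d+1/4}l_1\,ds$ via Lemma \ref{lem:vl} and Cauchy--Schwarz) also matches the paper's.
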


\begin{proof}
We only consider the case of $m^\ast_{\C}$, and we can similarly prove for $m^\ast_{\NN}$. 
By Proposition \ref{prop:tri3} we know
\[m_{\C}^*(2t,0,2y)\le m_{\C}^*(t,0,y)+m_{\C}^*(t,y,2y)+C
\]
for some $C\ge0$. 
Therefore, we only need to prove 
\[
m^*_{\C}(t,y,2y)\le m^*_{\C}(t,0,y)+C.
\]

By Proposition \ref{prop:ex-m}, 
there exist $\tilde x,\tilde y\in\partial\Omega$ with $\tilde x-0\in Y$, $\tilde y-y\in Y$, and 
$(\eta_1,v_1,l_1)\in\SP(\tilde{x})$ with $\eta(t)=\tilde y$ such that 
\[
m^\ast_{\C}(t,0,y)=\int_0^tL_{\C}(\eta_1(s),-v_1(s),-l_1(s))\,ds. 
\]
As in the proof of Proposition \ref{prop:tri3}, 
there exists $d\in\{0,\frac{1}{4},\frac{1}{2},\frac{3}{4},\dots,\lfloor t\rfloor-\frac{1}{4}\}$ such that
\begin{equation}\label{c3}
  \int_d^{d+\frac{1}{4}}L_{\C}(\eta_1(s),-v_1(s),-l_1(s))\,ds\le M
\end{equation}
for some $M\ge0$. 
Take $k\in\mathbb Z^n$ so that $\tilde x+k\in y+Y$ and define 
$\tilde\eta_1:=\eta_1+k$. 
Take $\tilde z\in\partial \Omega$ so that $\tilde z-2y\in Y$. 
By Proposition \ref{HJMT:prop-2.2} we can find a path 
$\xi:[0,1]\to\overline\Omega$ such that $\xi(0)=\tilde y+k$ and $\xi(1)=\tilde z$ with 
\[
\|\dot{\xi}\|_{\Li([0,1])}\le 6\sqrt{n}C_b=:C,
\]
since
\[|\tilde y+k-\tilde z|\le |\tilde y-y|+|\tilde x|+|\tilde x+k-y|+|2y-\tilde z|\le 4\sqrt{n}.\]
Define
\begin{equation*}
\begin{aligned}
(\eta_2(s),l_2(s))
:=&
\begin{cases}
(\tilde{\eta}_1(s),l_1(s)) & 
\text{for} \ 0\le s\le d,
\\ 
(\tilde{\eta}_1(2(s-d)+d),2l_1(2(s-d)+d) 
&\text{for} \ d\le s\le d+\frac{1}{8},
\\ (\tilde{\eta}_1(s+\frac{1}{8}),l_1(s+\frac{1}{8})) 
&
\text{for} \ d+\frac{1}{8}\le s\le t-\frac{1}{8},
\\ (\xi(8(s-t+\frac{1}{8})),0) &\text{for} \ t-\frac{1}{8}\le s\le t, 
\end{cases}
\\ v_2(s):=&\dot{\eta}_2(s)+l_2(s)\nu(\eta_2(s)).
\end{aligned}
\end{equation*}
Then, $(\eta_3,v_3,l_3)\in \SP(x)$ is an admissible control for $m^*(t,y,2y)$. Note that $\dot{\eta}_1=\dot{\tilde{\eta}}_1$. 
Thus, 
\begin{align*}
&m^*_{\C}(t,y,2y)\\
\le&\,  
\int_0^d L_{\C}(\tilde{\eta}_1(s),-v_1(s),-l_1(s))\, ds
\\ &+\int_{d}^{d+\frac{1}{8}}
L_{\C}(\tilde{\eta}_1(2(s-d)+d),-2v_1(2(s-d)+d),-2l_1(2(s-d)+d))\,ds
\\ &
+\int_{d+\frac{1}{8}}^{t-\frac{1}{8}}
L_{\C}\bigg(\eta_1\bigg(s+\frac{1}{8}\bigg),-v_1\bigg(s+\frac{1}{8}\bigg),-l_1\bigg(s+\frac{1}{8}\bigg)\bigg)\,ds
\\ &
+\int_{t-\frac{1}{8}}^{t}L_{\C}\bigg(\eta_2\bigg(8(s-t+\frac{1}{8}\bigg)\bigg),-8\dot{\eta}_2\bigg(8\bigg(s-t+\frac{1}{8}\bigg)\bigg),0\bigg)\,ds
\\
=&\, 
\int_0^d L_{\C}(\eta_1(s),-v_1(s),-l_1(s))\,ds
\\ &
+\int_{d}^{d+\frac{1}{8}}
L_{\C}(\eta_1(2(s-d)+d),-2v_1(2(s-d)+d),-2l_1(2(s-d)+d))\,ds
\\ &
+\int_{d+\frac{1}{8}}^{t-\frac{1}{8}}
L_{\C}\bigg(\eta_1\bigg(s+\frac{1}{8}\bigg),-v_1\bigg(s+\frac{1}{8}\bigg),-l_1\bigg(s+\frac{1}{8}\bigg)\bigg)\,ds
\\ &
+\int_{t-\frac{1}{8}}^{t}
L_{\C}\bigg(\xi\bigg(8(s-t+\frac{1}{8}\bigg)\bigg),-8\dot{\xi}\bigg(8\bigg(s-t+\frac{1}{8}\bigg)\bigg),0\bigg)\,ds.
\end{align*}
Repeating an argument in the proof of Proposition \ref{prop:tri3}, 
we obtain
\begin{align*}
&\int_0^d L(\eta_1(s),-v_1(s),-l_1(s))ds
\\ &+\int_{d+\frac{1}{8}}^{t-\frac{1}{8}}L\bigg(\eta_1\bigg(s+\frac{1}{8}\bigg),-v_1\bigg(s+\frac{1}{8}\bigg),-l_1\bigg(s+\frac{1}{8}\bigg)\bigg)ds\le m^*(t,0,y)+\frac{K_0}{4},
\end{align*}
and
\begin{align*}
&\int_{d}^{d+\frac{1}{8}}
L_{\C}(\eta_1(2(s-d)+d),-2v_1(2(s-d)+d),-2l_1(2(s-d)+d))\,ds
\\ &\le 2M+\frac{5K_0}{8},
\end{align*}
and
\begin{align*}
&\int_{t-\frac{1}{8}}^{t}
L_{\C}\bigg(\xi\bigg(8\bigg(s-t+\frac{1}{8}\bigg)\bigg),-8\dot{\xi}\bigg(8\bigg(s-t+\frac{1}{8}\bigg)\bigg),0\bigg)\, ds
\\ &
=\frac{1}{8}\int_0^1L(\xi(s),8\dot{\xi}(s),0)\, ds\le 4\int_0^1|\dot{\xi}(s)|^2\, ds+\frac{K_0}{8}\le 4C^2+\frac{K_0}{8}. 
\end{align*}
Combining all inequalities above, we obtain the conclusion. 
\end{proof}

\begin{lem}\label{prop:super-ad}
Let $m^\ast$ be either $m^\ast_{\NN}$ or $m^\ast_{\C}$ defined by \eqref{func:mn-star} or \eqref{func:mc-star}, respectively.  
Let $M_0$ be the constant given by Proposition {\rm\ref{prop:extremal-main}}. 
There is $C>0$ such that for all $t\ge  1$, $|y|\le M_0t$, we have
\[
2m^*(t,0,y)\le m^*(2t,0,2y)+C.
\]
\end{lem}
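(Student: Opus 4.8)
The plan is to follow the pattern of the proof of Lemma~\ref{lem:subad}, but now to extract a \emph{lower} bound for $m^\ast(2t,0,2y)$. I would treat $m^\ast=m^\ast_C$; the case $m^\ast=m^\ast_N$ is analogous (and in fact lighter, since there is no capillary term $h(x)l|p|$). First I would use Proposition~\ref{prop:ex-m} to pick an extremal triple $(\eta,v,l)\in\SP(\tilde x)$ with $\eta(2t)=\tilde z$ realizing $m^\ast_C(2t,0,2y)$, where $\tilde x,\tilde z\in\partial\Omega$, $\tilde x\in Y$, $\tilde z-2y\in Y$, and set $w:=\eta(t)$. Splitting the cost at time $t$ and noting that the two restrictions of $(\eta,v,l)$ are admissible controls for $m_C(t,\tilde x,w)$ and $m_C(t,w,\tilde z)$ gives $m^\ast_C(2t,0,2y)\ge m_C(t,\tilde x,w)+m_C(t,w,\tilde z)$. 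Using Proposition~\ref{prop:difference-mm}, the lattice translation invariance $m^\ast_C(t,a,b)=m^\ast_C(t,a-k,b-k)$ for $k\in\Z^n$, and short boundary bridges inserted on a cheap subinterval exactly as in the proof of Proposition~\ref{prop:tri3} (this is where $t\ge1$ enters: one opens a time slot by doubling speed and doubling $l$ on an interval where the cost is controlled through \eqref{v-}--\eqref{v+l2}), I would reduce this to
\[
m^\ast_C(2t,0,2y)\ \ge\ m^\ast_C(t,0,w)+m^\ast_C(t,0,2y-w)-C .
\]

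The remaining task is then an approximate midpoint convexity of the extended metric function in its spatial slot, namely $m^\ast_C(t,0,a)+m^\ast_C(t,0,2y-a)\ge 2\,m^\ast_C(t,0,y)-C$ for all admissible $a$; applied at $a=w$ and combined with the display above, this yields exactly $2m^\ast(t,0,y)\le m^\ast(2t,0,2y)+C$. In a convex domain this inequality would be immediate: one averages two extremal curves and uses convexity of $L_C$ in $(v,l)$. Here $\overline\Omega$ is a perforated, non-convex domain, so $\tfrac12(\xi_a+\xi_{2y-a})$ need not remain in $\overline\Omega$, and \emph{this is the main obstacle}. The plan to get around it is to exploit the periodicity of $\Omega$: partition $[0,t]$ into $O(1)$ subintervals, translate the corresponding pieces of $\xi_a$ and of $\xi_{2y-a}$ by lattice vectors so that on each subinterval both curves lie in one bounded portion of $\overline\Omega$, join consecutive pieces by short connecting paths lying in $\overline\Omega$ (again opening time slots by the doubling device on cheap subintervals, with the term $h(x)l|p|$ tracked via \eqref{v-}--\eqref{v+l2}), and verify that the total cost of all connectors stays $O(1)$ uniformly in $t$ and $y$.

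The delicate point — and the place where the careful, problem-specific bookkeeping is required — is precisely this last estimate: in addition to the displacements $\dot\eta$, one must control the reflection variable $l$ through every rescaling and gluing, a feature with no counterpart in the state-constraint setting of \cite{HJMT}, and show that its contribution to the connector costs (and to the doubled-$l$ subintervals, estimated by \eqref{v-}) does not accumulate with $t$. Once this is in place, assembling the two displays gives the claimed bound $2m^\ast(t,0,y)\le m^\ast(2t,0,2y)+C$, and the same argument, dropping the $h(x)l|p|$ terms and using \eqref{K1} in place of \eqref{v+l2}, settles the case $m^\ast=m^\ast_N$.
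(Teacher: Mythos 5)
There is a genuine gap, and it sits exactly where you flag ``the delicate point,'' but the missing ingredient is not bookkeeping of the reflection variable $l$ --- it is the mechanism that halves the displacement. Your reduction leaves you needing an approximate midpoint convexity
$m^\ast_C(t,0,a)+m^\ast_C(t,0,2y-a)\ge 2\,m^\ast_C(t,0,y)-C$ with a constant uniform in $t,y,a$. This statement is essentially as strong as the lemma itself (a posteriori it follows from $|m^\ast(t,0,\cdot)-t\,\overline L(\cdot/t)|\le C$, which is the output of the whole sub/superadditivity machinery), so it cannot be invoked as an easier intermediate step; it must be proved, and your sketch does not prove it. Averaging the two extremal curves fails because of the holes, as you note; but the proposed fix --- partitioning $[0,t]$, translating pieces of $\xi_a$ and $\xi_{2y-a}$ by lattice vectors, and gluing with short connectors --- never produces a single admissible trajectory of duration $t$ whose endpoint lies near $y$ at roughly half the combined cost. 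Lattice translations do not change the displacement of any piece, and selecting pieces totalling time $t$ from the two curves gives you no control on the sum of their displacements; choosing, say, the first half in time of one extremal gives displacement $\eta(t)-\eta(0)$, which can be far from $y$. What is needed is precisely the combinatorial selection supplied by Burago's cutting lemma (\cite[Lemma 2]{Burago}, \cite[Lemma 2.1]{TY}), which the paper applies to the space--time curve $\gamma(s)=(\eta_1(s),s)$ of the extremal for $m^\ast(2t,0,2y)$: it yields at most $\tfrac{n+2}{2}$ disjoint intervals of total length $t$ whose total displacement is exactly $\bigl(\tfrac{\tilde z-\tilde x}{2},t\bigr)$, as in \eqref{eq:Burago}. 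Because the number of cuts is $O(n)$, the periodic shifts, the $O(1)$ connectors from Lemma \ref{HJMT:prop-2.2}, and the time-compression (doubling of $v$ and $l$ on a cheap subinterval, estimated via \eqref{v-}--\eqref{v+l2}) cost only $O(1)$; doing this for the selected intervals and for their complement gives two bounds on $m^\ast(t,0,y)$ which sum to $m^\ast(2t,0,2y)+C$. Without this (or an equivalent selection device) your outline cannot close, no matter how carefully $l$ is tracked.

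A secondary, smaller issue: in your first reduction you apply Proposition \ref{prop:difference-mm} (and implicitly Proposition \ref{prop:tri3}-type surgery) to the pairs $(\tilde x,w)$ and $(w,\tilde z)$ with $w=\eta(t)$, but these results are stated under the constraint $|x-y|\le M_0 t$, and Proposition \ref{prop:extremal-main} gives the $M_0$-Lipschitz bound only for extremals of the value functions, not for minimizers of $m$ or $m^\ast$; so $|w-\tilde x|\le M_0 t$ is not guaranteed. The paper's argument avoids needing any per-piece displacement bound on the extremal: the only distances it must control are those of the finitely many jump points, bounded as in \eqref{dist:far}.
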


\begin{proof}
We only consider the case of $m^\ast_{\C}$, and we can similarly prove for $m^\ast_{\NN}$. 
By Proposition \ref{prop:ex-m}, there exist $\tilde{x}, \tilde{z}\in\partial\Omega$ with $\tilde x-0\in Y$, $\tilde z-2y\in Y$, 
and $(\eta_1,v_1, l_1)\in\SP(\tilde{x})$ with $\eta_1(2t)=\tilde z$ such that 
\[
m^\ast_{\C}(2t,0,2y)=
\int_0^{2t}L_{\C}(\eta_1(s),-v_1(s),-l(s))\, ds. 
\]
Set $\gamma(s):=(\eta_1(s),s)$ for $s\in [0,2t]$. 
By using Burago's cutting lemma (see \cite[Lemma 2]{Burago}, and also \cite[Lemma 2.1]{TY}), we do surgery as follows. 

First, owing to Burago's cutting lemma, 
there exists a collection of disjoint intervals $\{[a_i,b_i]\}_{1\le i\le k}\subset [0,2t]$ with $k\le \frac{n+2}{2}$ such that
\begin{equation}\label{eq:Burago}
\sum_{i=1}^{k}\big(\gamma(b_i)-\gamma(a_i)\big)
=\left(\frac{\tilde{z}-\tilde{x}}{2},t\right). 
\end{equation}
See Figure \ref{Fig:1}. 
\begin{figure}[htb]
\centering
\includegraphics[width=12.5cm]{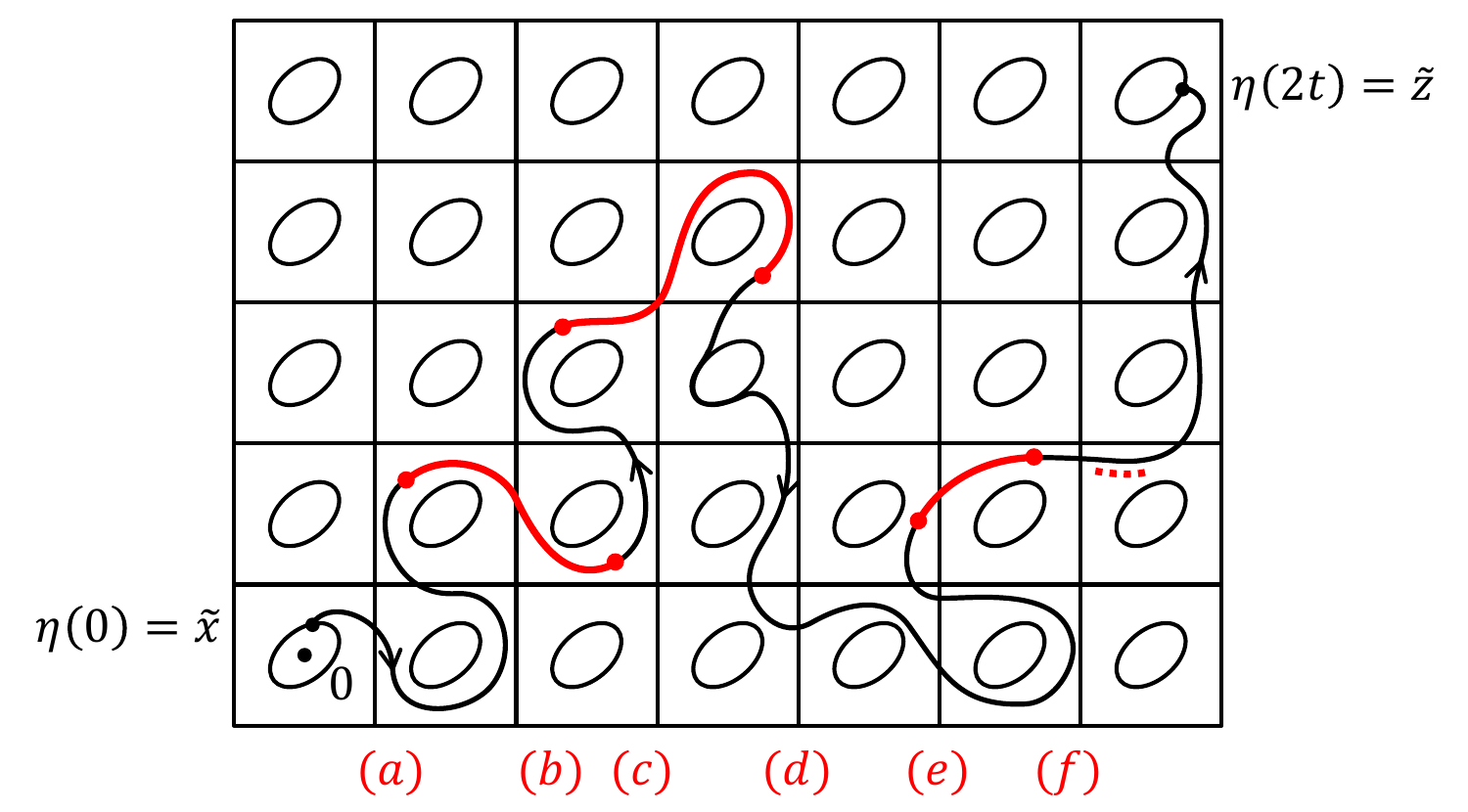}
\caption{Image Picture of $\eta_1$:  
(a) $\eta_1(a_1)$, 
(b) $\eta_1(b_1)$, 
(c) $\eta_1(a_2)$, 
(d) $\eta_1(b_2)$, 
(e) $\eta_1(a_3)$, 
(f) $\eta_1(b_3)$. 
} \label{Fig:1}
\end{figure}

Set 
$t_0=0$, $t_j:=\sum\limits_{i=1}^j(b_i-a_i)$ for $1\le j\le k$. 
It is clear to see that $t_k=t$. 
Then, consider a periodic shift of $\eta_1|_{[a_j,b_j]}$ for $1\le j\le k$ as follows: set 
\[
\left\{
   \begin{aligned}
   &\eta_2(0^-):=\hat{x}\in\partial\Omega\cap Y,  
   &  &  \\
   &\eta_2(s):=\eta_1(a_i+s)+\bm{d}_i
   & \text{for} & \ s\in(t_{i-1},t_i], \ i=1,\ldots, k 
    \end{aligned}
\right.
\]
for some $\bm{d}_i\in\Z^n$. 
Note that $\eta_2$ is a discontinuous trajectory on $\overline{\Omega}$ 
in general.  
Take $\{\bm{d}_i\}_{1\le i\le k}\subset\Z^n$ so that 
\begin{equation}\label{shift:periodic}
\eta_2(0^+)-0\in Y,
\quad 
\eta_2(t_i^+)-\eta_2(t_i^-)\in Y 
\quad
\text{for} \ i\in\{1,\ldots, k-1\}.  
\end{equation}
See Figure \ref{Fig:2}. 
\begin{figure}[htb]
\centering
\includegraphics[width=7.5cm]{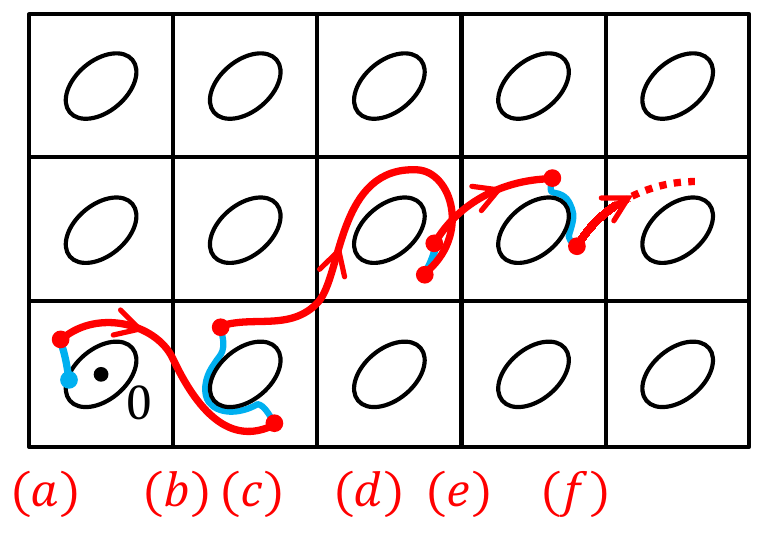}
\caption{Image Picture of $\eta_2$:  
(a) $\eta_1(a_1)+\bm{d}_1=:\eta_2(t_0)$, 
(b) $\eta_1(b_1)+\bm{d}_1=:\eta_2(t_1^-)$, 
(c) $\eta_1(a_2)+\bm{d}_2=:\eta_2(t_1^+)$, 
(d) $\eta_1(b_2)+\bm{d}_2=:\eta_2(t_2^-)$, 
(e) $\eta_1(a_3)+\bm{d}_3=:\eta_2(t_2^+)$, 
(f) $\eta_1(b_3)+\bm{d}_3=:\eta_2(t_2^-)$. 
} 
\label{Fig:2}
\end{figure}
\begin{figure}[htb]
\centering
\includegraphics[width=13cm]{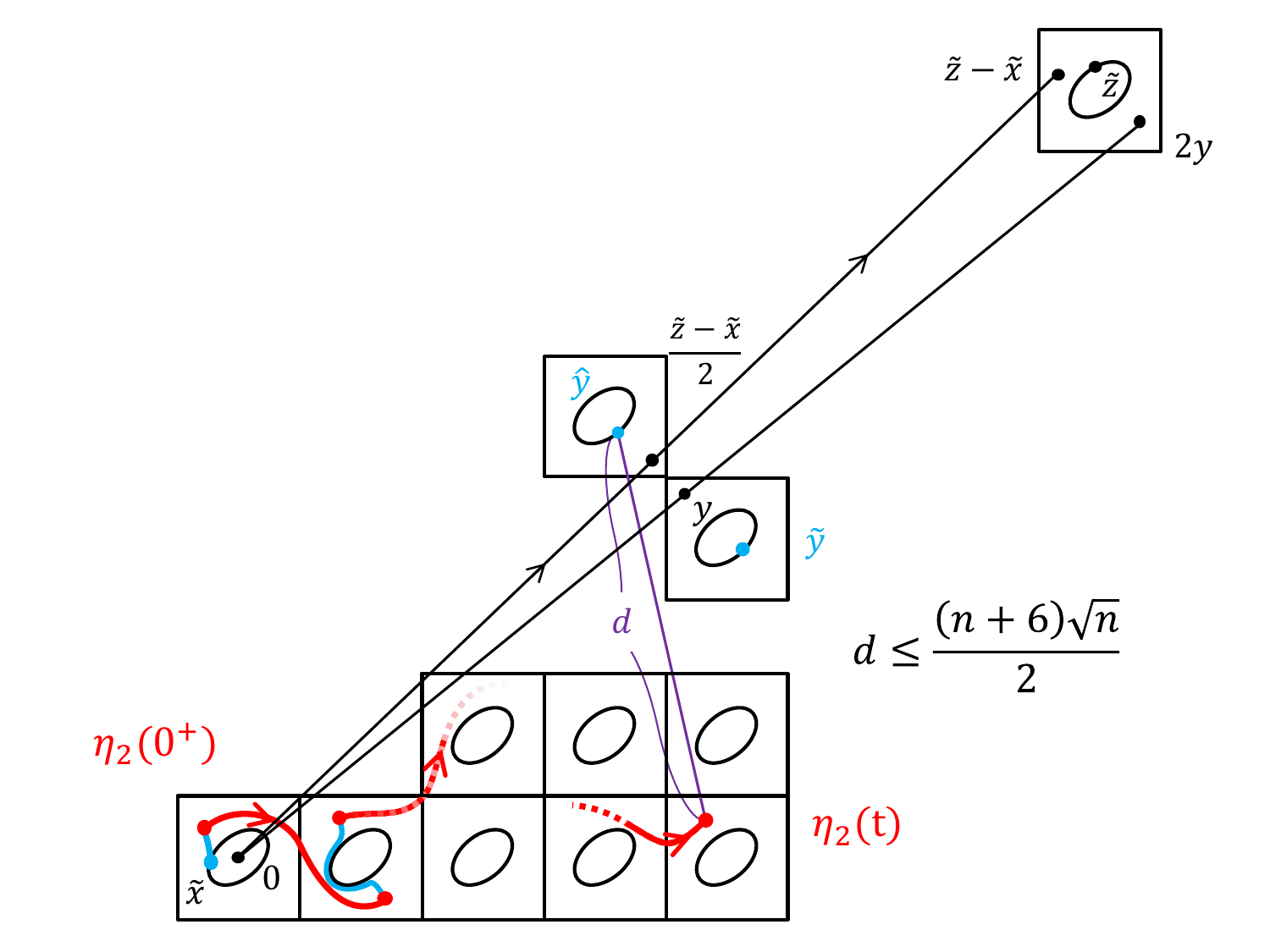}
\caption{Overall image of $\eta_2$. 
} 
\label{Fig:3}
\end{figure}
Note that 
\[
|\eta_2(t^+_j)-\eta_2(t^-_j)|\le \sqrt{n}\quad \text{for\ all}\ j\in\{1,\dots,k-1\}.
\]
Moreover, take $\hat{y}, \tilde{y}\in\partial\Omega$ so that 
\[
\hat y-\frac{\tilde z-\tilde x}{2}\in Y, 
\quad
\tilde y-y\in Y. 
\]
Note that
\[
|\tilde y-\hat y|\le |\tilde y-y|+\left|y-\frac{\tilde z-\tilde x}{2}\right|+\left|\frac{\tilde z-\tilde x}{2}-\hat y\right|\le 3\sqrt{n}.
\]
Also, noting that 
\[
\eta_2(t_k)
=\sum_{i=1}^{k}\eta_1(b_i)-\eta_1(a_i)+\sum_{i=0}^{k-1}\eta_1(t_{i}^{+})-\eta_1(t_{i}^{-})+\eta_2(t_{0}^{-}), 
\]
by using \eqref{eq:Burago}, \eqref{shift:periodic}, we have 
\begin{equation}\label{dist:far}
|\eta_2(t_k)-\hat{y}|\le 
\left|\hat{y}-\frac{\tilde{z}-\tilde{x}}{2}\right|+(k+1)\sqrt{n}
\le (k+2)\sqrt{n}
\le \frac{(n+6)\sqrt{n}}{2}. 
\end{equation}
We define 
\begin{align*}
& 
l_2(s):=l_1(a_i+s) && \text{for} \ s\in(t_{i-1},t_i], \ i\in\{1,\ldots,k\}, \\
& 
v_2(s):=\dot{\eta}_2(s)+l_2(s)\nu(\eta_2(s)) 
&& \text{for} \ s\in[0,t]. 
\end{align*}

Next, we would connect discontinuities of $\eta_2$ to construct an absolutely continuous trajectory. 
However, we would keep the time interval $t$. 
Therefore, we first use change of variables of time to save time. 
To do so, note that there exists $C>0$ such that, by Lemma \ref{lem:mb},  
\[
m_{\C}^*(2t,0,2y)= \int_0^{2t}L_{\C}(\eta_1(s),-v_1(s),- l_1(s))\, ds
\le
Ct, 
\]
and by repeating an argument in the proof of Proposition \ref{prop:tri3},
\begin{equation}\label{Leta}
-\frac{K_0}{4}\le 
\int_d^{d+\frac{1}{4}}L_{\C}(\eta_2(s),-v_2(s),-l_2(s))ds\le 
C
\end{equation}
for some 
$d\in\{0,\frac{1}{4},\frac{1}{2},\frac{3}{4},\dots,\lfloor t\rfloor-\frac{1}{4}\}$. 
Set 
\begin{align*}
(\eta_3(s), l_3(s))&:=
\begin{cases}
(\eta_2(s),l_2(s))& \text{for} \ s\in[0,d], 
\\ 
(\eta_2(2(s-d)+d),2l_2(2(s-d)+d))&
\text{for} \ s\in[d, d+\frac{1}{8}],
\\ 
(\eta_2(s+\frac{1}{8}),l_2(s+\frac{1}{8}))&
\text{for} \ s\in[d+\frac{1}{8}, t-\frac{1}{8}], 
\end{cases}\\
v_3(s)&:=\dot{\eta}_3(s)+l_3(s)\nu(\eta_3(s)) 
\qquad 
\text{for} \ s\in\left[0,t-\frac{1}{8}\right]. 
\end{align*}

Then, 
by \eqref{Leta} and arguing as in \eqref{intd}, we get
\begin{align*}
&
\left|\int_0^t 
L_{\C}(\eta_2(s),-v_2(s),-l_2(s))\,ds
-\int_0^{t-\frac{1}{8}}L_{\C}(\eta_3(s),-v_3(s),-l_3(s))\,ds\right|\\
\\ 
=&\, 
\left|\int_d^{d+\frac{1}{4}} 
L_{\C}(\eta_2(s),-v_2(s),-l_2(s))\,ds
-\int_d^{d+\frac{1}{8}}L_{\C}(\eta_3(s),-v_3(s),-l_3(s))\,ds\right|\\
\le&\, 
\left|\int_d^{d+\frac{1}{4}}L_{\C}(\eta_2(s),-v_2(s),-l_2(s))\, ds\right|
\\&\quad 
+\left|\int_d^{d+\frac{1}{8}}
L_{\C}(\eta_2(2(s-d)+d),-2v_2(2(s-d)+d),-2l_2(2(s-d)+d))\,ds\right|
\\
\le&\, 
C
\end{align*}
for some $C>0$.

Next, 
by using Lemma \ref{HJMT:prop-2.2}, 
we construct paths 
$\{\xi_i\}_{i=1}^{k+2}\in\AC([0,\tau_k],\overline{\Omega})$ to connect discontinuity points 
``$\hat{x}$ and $\eta_2(0^+)$", 
``$\eta_2(t_i^-)$ and $\eta_2(t_i^+)$" for $i=1,\ldots, k-1$, 
``$\eta_2(t_k)$ and $\hat{y}$", and 
``$\hat{y}$ and $\tilde{y}$", where we set $\tau_k:=\frac{1}{8(k+2)}$.
Note that the most distance of two points above is at most 
$\frac{(n+6)\sqrt{n}}{2}$ by \eqref{dist:far}. 
By using Lemma \ref{HJMT:prop-2.2}, we have an estimate of velocity 
\[
\|\dot{\xi}_i\|_{\Li([0,\tau_k])}\le 
M_\Omega\left(
\frac{(n+6)\sqrt{n}}{2}\cdot 8(k+2)
+2\sqrt{n}\cdot 8(k+2)
\right),\quad \text{for\ all}\ i=1,\dots,k+2.
\]

Now, gluing $\{\xi_i\}_{i=1}^{k+2}$ to $\eta_3$, we obtain 
\[
\eta_4\in\AC([0,t],\overline{\Omega}) \ \text{with} \ 
\eta(0), \eta(t)\in\partial\Omega \ 
\text{satisfying} \ 
\eta(0)-0\in Y,\ \eta(t)-y\in Y. 
\]
More precisely, we first calculate the jumping points $\{\ol t_i\}_{0\le i\le k-1}$ of $\eta_3$. By the definition of $\eta_3$, for $i\in\{0,\dots,k-1\}$ with $t_i\le d$, we have $\ol t_i=t_i$. For $i\in\{0,\dots,k-1\}$ with $d\le t_i\le d+\frac{1}{4}$, we have $\ol t_i=\frac{t_i+d}{2}$. For $i\in\{0,\dots,k-1\}$ with $t_i\ge d+\frac{1}{4}$, we have $\ol t_i=t_i-\frac{1}{8}$. For $0\le j\le k-1$, we set
\begin{align*}
& 
\eta_4(s):=\xi_{j+1}\left(s-\ol t_j-j\tau_k\right) && 
\text{for} \ s\in\left[\ol t_j+j\tau_k, \ol t_j+(j+1)\tau_k\right], \\
& 
\eta_4(s):=
\eta_3(s-(j+1)\tau_k)
&& 
\text{for} \ s\in\left[\ol t_j+(j+1)\tau_k, \ol t_{j+1}+(j+1)\tau_k\right].  
\end{align*}
Finally, we set 
\[
\eta_4(s):=
\xi_{k+1}(s-\ol t_k-k\tau_k)
\quad\text{for} \ s\in\left[\ol t_k+k\tau_k, \ol t_k+(k+1)\tau_k\right]. 
\]
and
\[
\eta_4(s):=
\xi_{k+2}(s-\ol t_k-(k+1)\tau_k)
\quad\text{for} \ s\in\left[\ol t_k+(k+1)\tau_k, \ol t_k+(k+2)\tau_k\right],
\]
where we note that $\ol t_k+(k+2)\tau_k=t$. 
Also, for $0\le j\le k-1$ we set 
\begin{align*}
& 
l_4(s):=
0
&& 
\text{for} \ s\in\left[\ol t_j+j\tau_k, \ol t_j+(j+1)\tau_k\right],  
\\
& 
l_4(s):=l_3\left(s-(j+1)\tau_k\right) && 
\text{for} \ s\in\left[\ol t_j+(j+1)\tau_k, \ol t_{j+1}+(j+1)\tau_k\right].
\end{align*}
We also set
\[l_4(s):=0\quad 
\text{for} \ s\in\left[\ol t_k+k\tau_k, \ol t_k+(k+1)\tau_k\right]\cup\left[\ol t_k+(k+1)\tau_k, \ol t_k+(k+2)\tau_k\right].\] 
We set 
\[
v_4(s):=\dot{\eta}_4(s)+l_4(s)\nu(\eta_4(s))\quad
\text{for} \ s\in[0,t]. 
\]
Then, it is easy to check that $(\eta_4,l_4,v_4)\in\SP(0)$ with 
$\eta_4(t)=\tilde y$. 
Therefore, by the definition of $m^\ast_{\C}$, 
\begin{align}
m^*_{\C}(t,0,y)&\le \int_0^t
L_{\C}(\eta_4(s),-v_4(s),-l_4(s))\,ds
\nonumber
\\ &\le 
\int_0^t
L_{\C}(\eta_2(s),-v_2(s),-l_2(s))\,ds+C
\nonumber\\ &
= 
\sum_{i=1}^k\int_{a_i}^{b_i}
L_{\C}(\eta_1(s),-v_1(s),-l_1(s))\,ds+C. 
\label{ineq:sup1}
\end{align}

For the other interval cut by Burago's cutting lemma, 
we set 
\[
[0,2t]\setminus\bigcup_{i=1}^k[a_i,b_i]
=:
\bigcup_{i=1}^{\tilde{k}}[e_i,f_i]
\]
for some $\tilde{k}\in\N$ with $\tilde{k}\le k+1$. 
Repeating the same argument as above, we obtain 
\begin{equation}\label{ineq:sup2}
m^*_{\C}(t,0,y)
\le 
\sum_{i=1}^{\tilde{k}}\int_{e_i}^{f_i}
L_{\C}(\eta_1(s),-v_1(s),-l_1(s))\,ds+C. 
\end{equation}
Combining \eqref{ineq:sup1} with \eqref{ineq:sup2}, we obtain 
\begin{align*}
2m^*_{\C}(t,0,y)
\le&\, 
\sum_{i=1}^k\int_{a_i}^{b_i}
L_{\C}(\eta_1(s),-v_1(s),-l_1(s))\,ds+C\\
&\, 
+
\sum_{i=1}^{\tilde{k}}\int_{e_i}^{f_i}
L_{\C}(\eta_1(s),-v_1(s),-l_1(s))\,ds+C \\
=&\, 
\int_{0}^{2t}
L_{\C}(\eta_1(s),-v_1(s),-l_1(s))\,ds+2C
=
m^\ast_{C}(2t,0,2y)+2C,  
\end{align*}
which finishes the proof. 
\end{proof}

\section{Proof of Theorem \ref{thm:main}}\label{sec:pthm}

By using the subadditivity and the superadditivity of extended metric functions, we prove Theorem \ref{thm:main} following the argument in \cite[Section 4]{HJMT}. We provide the proof here for completeness. In this section, we denote by $m^*$ either $m^*_{\NN}$ or $m^*_{\C}$. 
We also denote by $M_0$ the constant given by Proposition {\rm\ref{prop:extremal-main}}. 

The following lemma is a straight forward result of the subadditivity of $m^*$, Lemma \ref{lem:subad}, and the Fekete lemma.
\begin{lem} 
For any $t>0$ and $x,y\in\R^n$ with $|x-y|\le M_0 t$, the following limit exists
\[\ol m^*(t,x,y):=\lim_{k\to+\infty}\frac{1}{k}m^*(kt,kx,ky).\]
\end{lem}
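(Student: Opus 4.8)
The plan is to apply Fekete's subadditive lemma to the sequence $a_k:=m^*(kt,kx,ky)$, $k\in\N$. Each $a_k$ is a (finite) real number by Lemma~\ref{lem:mb}. For a lower bound, recall that the modified Lagrangians are bounded from below: $L_N\ge -K_1$ by \eqref{K1}, while for $L_C$ one combines \eqref{v+l2} with Lemma~\ref{lem:vl} (which gives $l\le|v|$ along any admissible control, so that \eqref{v+l2} applies) to get $L_C\ge -K_0$. Integrating over $[0,kt]$ yields $a_k\ge -C_0\,kt$ with $C_0>0$ independent of $k$; in particular $\inf_{k\ge1}a_k/k>-\infty$.

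The key point is an almost-subadditivity estimate $a_{j+k}\le a_j+a_k+C$, valid whenever $jt\ge1$ and $kt\ge1$. I would split the displacement from $(j+k)x$ to $(j+k)y$ at the point $jy+kx$. Since $|x-y|\le M_0t$, one has $|(j+k)x-(jy+kx)|=j|x-y|\le M_0(jt)$ and $|(jy+kx)-(j+k)y|=k|x-y|\le M_0(kt)$, so Proposition~\ref{prop:tri3} gives
\[
a_{j+k}\le m^*\!\big(jt,(j+k)x,jy+kx\big)+m^*\!\big(kt,jy+kx,(j+k)y\big)+C .
\]
Now $m^*(jt,(j+k)x,jy+kx)$ has the same displacement $j(y-x)$ as $a_j=m^*(jt,jx,jy)$, but its initial point is translated by $kx$; likewise $m^*(kt,jy+kx,(j+k)y)$ has the same displacement $k(y-x)$ as $a_k$ with initial point translated by $jy$. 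Hence the remaining --- and main --- task is a coarse translation invariance of $m^*$: there is $C_1>0$, independent of the shift, such that for $\tau\ge1$ and $x_1,y_1,x_2,y_2\in\R^n$ with $y_1-x_1=y_2-x_2$ and $|x_1-y_1|\le M_0\tau$ one has $|m^*(\tau,x_1,y_1)-m^*(\tau,x_2,y_2)|\le C_1$. This is proved exactly as Lemma~\ref{lem:subad}: take an extremal control for, say, $m^*(\tau,x_1,y_1)$ (Proposition~\ref{prop:ex-m}); translate it by the element of $\Z^n$ closest to $x_2-x_1$, which leaves the running cost unchanged because $H$, $g$, $\theta$ and $\nu$ are $\Z^n$-periodic and, using $y_1-x_1=y_2-x_2$, moves both endpoints to within $O(\sqrt n)$ of the endpoints for $m^*(\tau,x_2,y_2)$; then prepend and append bounded-velocity connecting arcs furnished by Lemma~\ref{HJMT:prop-2.2}, and absorb the extra time by speeding the curve up on a ``cheap'' quarter-interval as in the proof of Proposition~\ref{prop:tri3}. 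Interchanging the roles of the two sides gives the reverse inequality. Combining, $a_{j+k}\le a_j+a_k+(C+2C_1)=:a_j+a_k+C'$.

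It remains to conclude. Fix $k_0\in\N$ with $k_0 t\ge 1$ and set $b_k:=a_k+C'$; then $b_{j+k}\le b_j+b_k$ for all $j,k\ge k_0$, and $b_k\ge -C_0\,kt$. By Fekete's lemma the limit $\lim_{k\to\infty}b_k/k$ exists and is finite, hence so is $\lim_{k\to\infty}a_k/k=\overline m^*(t,x,y)$. Thus the only nontrivial ingredient is the coarse translation invariance of $m^*$ established in the second step; the rest consists of a direct application of Proposition~\ref{prop:tri3}, the lower bounds on the Lagrangians, and the classical Fekete lemma.
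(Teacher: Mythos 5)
Your argument is correct, and it actually supplies the details that the paper leaves implicit: the paper gives no written proof, asserting only that the statement follows from Lemma \ref{lem:subad} and Fekete's lemma. Your route differs in a useful way. Instead of the doubling inequality $m^*(2t,0,2y)\le 2m^*(t,0,y)+C$ at the base point $0$, you prove genuine almost-subadditivity of $a_k=m^*(kt,kx,ky)$ by splitting at the intermediate point $jy+kx$ via Proposition \ref{prop:tri3} and then establishing a coarse translation invariance of $m^*$ (the value changes by at most a universal constant under a shift preserving the displacement). That translation estimate is exactly the kind of statement the paper proves as a special case inside Lemma \ref{lem:subad} (the step $m^*(t,y,2y)\le m^*(t,0,y)+C$), and your proof of it --- integer translation using periodicity, the connecting arcs of Lemma \ref{HJMT:prop-2.2}, and the time-saving speed-up on a cheap quarter interval --- is the same surgery as in Propositions \ref{prop:tri3} and \ref{prop:difference-mm}; the ingredients you need (extremal curves from Proposition \ref{prop:ex-m}, the upper bound of Lemma \ref{lem:mb}, the lower bounds \eqref{v+l2} and \eqref{K1}) are all available, and the displacement hypotheses $j|x-y|\le M_0 jt$, $k|x-y|\le M_0 kt$ are checked correctly. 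What your approach buys is that the Fekete-type argument (in the standard variant for sequences subadditive up to a constant and only for indices $\ge k_0$, which you rightly invoke to handle $t<1$) yields the limit along all integers $k$ and at arbitrary base points $x$, whereas the doubling inequalities of Lemmas \ref{lem:subad} and \ref{prop:super-ad} alone most naturally give convergence along dyadic subsequences and would still need a translation comparison of the type you prove in order to reach the general statement. So the proposal is a valid, and arguably more complete, proof than the paper's one-line justification.
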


\begin{lem}\label{lem:mcon}
Let $\ep>0$, $t\ge \ep$ and $x,y\in \R^n$ with $|x-y|\le M_0t$. Then, there exists a constant $C>0$ such that
\[\bigg|\ol m^*(t,x,y)-\ep m^*\Big(\frac{t}{\ep},\frac{x}{\ep},\frac{y}{\ep}\Big)\bigg|\le C\ep.\]
\end{lem}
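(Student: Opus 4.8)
The statement compares the homogenized metric $\ol m^*(t,x,y)$ against the $\ep$-rescaled metric $\ep\, m^*(t/\ep, x/\ep, y/\ep)$, so the natural route is to exploit the subadditivity (Lemma \ref{lem:subad}) and superadditivity (Lemma \ref{prop:super-ad}) of $m^*$ together with the scaling $m^*(kt,kx,ky)$ that defines $\ol m^*$. First I would record the two-sided ``almost additivity'' consequence of those lemmas: iterating Lemma \ref{lem:subad} gives $m^*(2^j t, 0, 2^j y)\le 2^j m^*(t,0,y) + (2^j-1)C$, and iterating Lemma \ref{prop:super-ad} gives the reverse bound $2^j m^*(t,0,y)\le m^*(2^j t,0,2^j y) + (2^j-1)C$. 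Dividing by $2^j$ and letting $j\to\infty$ along the dyadic subsequence (which suffices since the full limit $\ol m^*$ exists by the Fekete lemma) yields
\[
|\ol m^*(t,0,y) - m^*(t,0,y)| \le C
\]
for all $t\ge 1$ and $|y|\le M_0 t$, with $C$ independent of $t$ and $y$. By translation covariance of the construction (the surgery arguments only use periodicity in $\Z^n$, so $m^*(t,x,y)$ and $\ol m^*(t,x,y)$ depend on $x,y$ in a $\Z^n$-periodic fashion modulo the passage to $\partial\Omega$), the same bound holds for general $x,y$: $|\ol m^*(t,x,y)-m^*(t,x,y)|\le C$ whenever $t\ge 1$, $|x-y|\le M_0 t$.

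Next I would transfer this to the $\ep$-scaled statement via the exact scaling identity. The key observation is that $m^*(t/\ep, x/\ep, y/\ep)$ is, up to the ``rounding to $\partial\Omega$'' that defines the star-metric, the value $m(t/\ep,\cdot,\cdot)$ of the non-rescaled control problem on $\Omega$, and by the change of variables $s\mapsto \ep s$ in the Skorokhod problem \eqref{eq:Sk}, together with the homogeneity $L_C(y,q,l)$ in the $1$-homogeneous rescaling of $(q,l,s)$ — more precisely, using that the Lagrangians satisfy $L(y, q/\ep, l/\ep)\cdot \ep$ under time rescaling behaves correctly — one gets exactly
\[
\ep\, m^*\Big(\tfrac{t}{\ep},\tfrac{x}{\ep},\tfrac{y}{\ep}\Big)
= \inf\Big\{\int_0^t \ep\, L\Big(\tfrac{\zeta(s)}{\ep}, -\tfrac{\dot\zeta(s)+\ell(s)\gamma(\zeta/\ep)}{\ep}\cdot\ep, \cdots\Big)\Big\};
\]
I would instead phrase this cleanly: scaling shows $\ep\, m^*(t/\ep,x/\ep,y/\ep)$ is itself a metric function of the same type (it is the ``$\ep$-version'' $m^{*,\ep}$ associated with the rescaled domain $\Omega_\ep$ and Lagrangian $\ep L(\cdot/\ep,\cdot/\ep,\cdot/\ep)$), and that the homogenized quantity attached to it is precisely $\ol m^*(t,x,y)$, which is $\ep$-independent. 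Then applying the inequality $|\ol m^* - m^*|\le C$ proved in the first paragraph, but now at scale $t/\ep$ (which is $\ge 1$ since $t\ge\ep$) and with $x/\ep,y/\ep$ in place of $x,y$ (note $|x/\ep - y/\ep|\le M_0 t/\ep$ is the hypothesis), gives
\[
\Big|\ol m^*\Big(\tfrac{t}{\ep},\tfrac{x}{\ep},\tfrac{y}{\ep}\Big) - m^*\Big(\tfrac{t}{\ep},\tfrac{x}{\ep},\tfrac{y}{\ep}\Big)\Big|\le C.
\]
Using the scaling homogeneity $\ol m^*(t/\ep, x/\ep, y/\ep) = \ep^{-1}\ol m^*(t,x,y)$ (immediate from the definition $\ol m^*(t,x,y)=\lim_k k^{-1} m^*(kt,kx,ky)$ by taking $k$ along multiples of $\ep^{-1}$ when $\ep^{-1}\in\N$, and by a density/continuity argument in general), multiply through by $\ep$ to conclude $|\ol m^*(t,x,y) - \ep\, m^*(t/\ep,x/\ep,y/\ep)|\le C\ep$.

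**Main obstacle.** The technical heart is making the scaling identities precise: one must check that $\ep\, m^*(t/\ep,x/\ep,y/\ep)$ genuinely equals the naturally $\ep$-rescaled metric and that $\ol m^*$ is exactly $1$-homogeneous under the joint scaling $(t,x,y)\mapsto(\lambda t,\lambda x,\lambda y)$. The first is a change-of-variables computation in the Skorokhod problem \eqref{eq:Sk} combined with the structure of $L_N, L_C$ in \eqref{def:L1}--\eqref{def:L2}; the subtle point is the ``round to $\partial\Omega$ within a unit cube $Y$'' step in \eqref{func:mc-star}, which under rescaling rounds within the cube $\ep Y$ — but since the constant $C$ in the first-paragraph bound is uniform and the correction from $\ep Y$ versus $Y$ is absorbed by enlarging $C$ (distances of order $\sqrt n\,\ep$ contribute Lagrangian cost $O(\ep)$ over a time interval of length $\ge\ep$, hence $O(1)$ after the $\ep$-scaling), this is harmless. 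The second homogeneity is where I would be most careful: for non-integer $\ep^{-1}$, one needs either a direct subadditivity/superadditivity argument at non-integer scales (mirroring Lemmas \ref{lem:subad}--\ref{prop:super-ad}, which are stated for the doubling $t\mapsto 2t$) or an approximation argument using that $\ol m^*$ and $m^*$ are both Lipschitz in $(t,x,y)$ on the relevant cone — this continuity is itself a consequence of Lemma \ref{lem:mb} and Proposition \ref{prop:tri3}. I expect this homogeneity-at-general-scale point, rather than the main dyadic estimate, to require the most care in a full write-up.
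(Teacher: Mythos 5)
Your proposal is correct and follows essentially the same route as the paper: iterate the dyadic bounds from Lemmas \ref{lem:subad} and \ref{prop:super-ad} at the rescaled arguments $(t/\ep,x/\ep,y/\ep)$, pass to the limit along powers of $2$ (legitimate since the full limit defining $\ol m^*$ exists), and then use the $1$-homogeneity of $\ol m^*$ to convert $\ep\,\ol m^*(t/\ep,x/\ep,y/\ep)$ into $\ol m^*(t,x,y)$. The only differences are presentational: the paper writes out just the superadditivity direction (with $x=0$ taken without loss of generality) and invokes homogeneity without comment, whereas you treat both directions and flag the non-integer-scale homogeneity issue; your intermediate digression about rescaling the Skorokhod problem is unnecessary but harmless.
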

\begin{proof}
Without loss of generality, we can consider the case where $x=0$. We only prove the direction
\[\ep m^*\Big(\frac{t}{\ep},0,\frac{y}{\ep}\Big)-\ol m^*(t,x,y)\le C\ep.\]
The proof of the other direction is similar.

Let $\tilde t:=\frac{t}{\ep}$ and $\tilde y:=\frac{y}{\ep}$. Then $\tilde t\ge 1$ and $|\tilde y|\le M_0\tilde t$. By Lemma \ref{prop:super-ad} we have
\[2(m^*(\tilde t,0,\tilde y)-C)\le m^*(2\tilde t,0,2\tilde y)-C.\]
For $k\in\mathbb N$, iterating the above for $k$ times, we get
\[2^k(m^*(\tilde t,0,\tilde y)-C)\le m^*(2^k\tilde t,0,2^k\tilde y)-C.\]
Dividing both sides by $2^k$ and sending $k\to+\infty$ we obtain
\[m^*(\tilde t,0,\tilde y)-C\le \ol m^*(\tilde t,0,\tilde y),\]
which implies
\[\ep m^*\Big(\frac{t}{\ep},0,\frac{y}{\ep}\Big)-C\ep\le \ep\ol m^*\Big(\frac{t}{\ep},0,\frac{y}{\ep}\Big)=\ol m^*(t,0,y),\]
where for the last equality, we used the homogeneity of $\ol m^*$.
\end{proof}

Let $u^\ep$ be the solution of \eqref{eq:CN1}-\eqref{eq:CN3}. 
According to Theorem \ref{thm:value} and Proposition \ref{prop:extremal-main}, it is direct to check that
\[u^\ep(x,t)=\inf\bigg\{\ep m\Big(\frac{t}{\ep},\frac{x}{\ep},\frac{y}{\ep}\Big)+u_0(y)\mid |x-y|\le M_0t,\ y\in\ol \Omega_\ep\bigg\}.\]
We now know that $u^\ep$ converges to the solution of \eqref{eq:limit}, which is represented by
\[u(x,t)=\inf\bigg\{t\ol{L}\Big(\frac{x-y}{t}\Big)+u_0(y)\mid y\in\R^n\bigg\}.\]
Now we need to show that
\[\ol u(x,t):=\inf\bigg\{\ol m^*(t,x,y)+u_0(y)\mid |x-y|\le M_0t,\ y\in\R^n\bigg\}\]
conincides with $u(x,t)$.

\begin{lem}
We have $u(x,t)=\bar u(x,t)$ for all $(x,t)\in\R^n\times[0,\infty)$.
\end{lem}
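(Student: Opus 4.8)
The goal is to identify the two representation formulas
\[
\bar u(x,t)=\inf\Big\{\bar m^*(t,x,y)+u_0(y)\mid |x-y|\le M_0t,\ y\in\R^n\Big\}
\quad\text{and}\quad
u(x,t)=\inf\Big\{t\,\bar L\Big(\tfrac{x-y}{t}\Big)+u_0(y)\mid y\in\R^n\Big\},
\]
where $u$ is the (unique, by the comparison principle) viscosity solution of the limit problem \eqref{eq:limit} with Hamiltonian $\bar H$. My strategy is \emph{not} to compare the two infima term by term, since the constraint $|x-y|\le M_0t$ and the relation between $\bar m^*$ and $\bar L$ are not transparent, but rather to show that $\bar u$ is itself a viscosity solution of \eqref{eq:limit} and then invoke uniqueness. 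Concretely: first I would check $\bar u(x,0)=u_0(x)$ (the initial condition) using Lemma~\ref{lem:mb} to get $\bar m^*(t,x,y)=O(t)$ as $t\to0$ when $y$ is within $M_0t$ of $x$, together with the Lipschitz continuity of $u_0$; this forces the infimum to be attained near $y=x$ and to converge to $u_0(x)$. Second, I would establish the dynamic programming principle for $\bar u$, namely
\[
\bar u(x,t)=\inf\Big\{\bar m^*(\tau,x,z)+\bar u(z,t-\tau)\mid 0<\tau<t,\ |x-z|\le M_0\tau\Big\},
\]
which follows from the (approximate) subadditivity of $\bar m^*$ — this is exactly the content inherited from Proposition~\ref{prop:tri3} and Lemma~\ref{lem:subad} after passing to the Fekete limit, where the additive constant $C$ disappears in the limit so that $\bar m^*$ is genuinely subadditive. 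Third, from this DPP one shows in the standard way that $\bar u$ is a viscosity sub- and supersolution of $u_t+\bar H(Du)=0$: the effective Lagrangian $\bar L$ is the Legendre transform of $\bar H$, and $\bar m^*(t,x,y)$ behaves like $t\,\bar L((x-y)/t)$ up to lower-order corrections, which is precisely what is needed to test against $C^1$ functions. Finally, by the comparison principle for the limit equation, $\bar u\equiv u$.

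Alternatively — and this is likely closer to what the authors do following \cite[Section~4]{HJMT} — one proves the two-sided inequality directly. For $\bar u\le u$: given $y\in\R^n$, if $|x-y|\le M_0t$ one uses the known relation $\bar m^*(t,x,y)\le t\,\bar L((x-y)/t)+o(\cdot)$ coming from the definition of $\bar L$ as the averaged Lagrangian and the subadditivity (Lemma~\ref{lem:subad}); if $|x-y|>M_0t$ one checks that such $y$ cannot be optimal for $u(x,t)$ because $\bar L$ is superlinear and $u_0$ is Lipschitz, so the constraint is not actually binding. For $u\le \bar u$: given an admissible $y$ with $|x-y|\le M_0t$, one uses the superadditivity (Lemma~\ref{prop:super-ad}), again after the Fekete limit kills the constant, to get $t\,\bar L((x-y)/t)\le \bar m^*(t,x,y)$, whence $u(x,t)\le \bar m^*(t,x,y)+u_0(y)$; taking the infimum over $y$ gives $u\le\bar u$. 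The crucial input tying $\bar m^*$ to $\bar L$ is that $\bar m^*(kt,0,ky)/k\to \bar L(y/t)\cdot t$, which is where the cell problem \eqref{eq:cell} and the identity $\bar L=\bar H^*$ enter; this can be extracted from the representation $u^\ep(x,t)=\inf\{\ep m(t/\ep,x/\ep,y/\ep)+u_0(y)\}$ combined with the qualitative homogenization $u^\ep\to u$ already recorded in the excerpt.

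The main obstacle I anticipate is making precise the quantitative relation between the extended metric function $\bar m^*$ and the effective Lagrangian $\bar L$, i.e. showing $\bar m^*(t,x,y)=t\,\bar L((x-y)/t)$ \emph{exactly} (not merely up to $O(1)$). The $O(1)$ error that survives each individual application of Proposition~\ref{prop:tri3}, Lemma~\ref{lem:subad} and Lemma~\ref{prop:super-ad} must be shown to vanish under the Fekete rescaling $\frac1k m^*(kt,kx,ky)$; this is routine once one observes that the constant $C$ in those lemmas is independent of $k$, so $\frac{C}{k}\to0$. The second delicate point is the treatment of the constraint $|x-y|\le M_0t$: one must argue that replacing it by $y\in\R^n$ unconstrained changes neither infimum, which relies on the a~priori Lipschitz bound for $u^\ep$ (Proposition~\ref{prop:lip}) transferring to $u$, together with superlinearity of $\bar L$ inherited from \eqref{eq:K_0H}. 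Both of these are technical but not conceptually hard, and the rest of the argument is a verbatim adaptation of the corresponding step in \cite{HJMT}.
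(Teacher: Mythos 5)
Your plan diverges from the paper's proof, and both of your routes hinge on a step that your sketch does not actually supply: the identification of $\ol m^\ast(t,x,y)$ with $t\,\ol L\big(\tfrac{x-y}{t}\big)$, equivalently the statement that the effective Hamiltonian encoded in $\ol m^\ast$ is the cell-problem Hamiltonian $\ol H$ of \eqref{eq:limit}. You name this as the ``main obstacle'' and then dismiss it as routine because the additive constants in Proposition \ref{prop:tri3}, Lemma \ref{lem:subad} and Lemma \ref{prop:super-ad} vanish under the Fekete rescaling. That observation only yields existence, homogeneity and exact sub/superadditivity of $\ol m^\ast$; it says nothing about why the limit coincides with the Legendre transform of the $\ol H$ defined through \eqref{eq:cell}. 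Likewise, extracting the pointwise limit $\tfrac1k m^\ast(kt,kx,ky)\to t\,\ol L\big(\tfrac{x-y}{t}\big)$ from the representation formula for $u^\ep$ together with $u^\ep\to u$ is not immediate: convergence of the inf-convolutions $\inf_y\{\ep m(\cdot)+u_0(y)\}$ for the single given $u_0$ does not determine the kernel pointwise, and one would need an additional localization argument (for instance rerunning the machinery with a family of steep or affine initial data) that you neither state nor prove. The same gap undermines your first route: to invoke uniqueness for \eqref{eq:limit} you must know that $\ol u$ solves the equation with that particular $\ol H$, which is again the unproven identification.

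The paper sidesteps the identification entirely. It fixes $(x,t)$, takes a minimizer $y_{\ep,t,x}$ in the representation $u^\ep(x,t)=\ep m\big(\tfrac t\ep,\tfrac x\ep,\tfrac{y_{\ep,t,x}}{\ep}\big)+u_0(y_{\ep,t,x})$ (with $|y_{\ep,t,x}-x|\le M_0t$ by Proposition \ref{prop:extremal-main}), converts $m$ into $m^\ast$ and then into $\ol m^\ast$ with $O(\ep)$ errors via Proposition \ref{prop:difference-mm} and Lemma \ref{lem:mcon}, and uses only the qualitative uniform convergence $u^\ep\to u$ to get $\ol u(x,t)-u(x,t)\le 2C\ep+\delta$; the reverse inequality is obtained symmetrically from a $\delta$-minimizer of $\ol u$, and one concludes by sending $\ep\to0$ along a sequence with $x\in\ol\Omega_{\ep_n}$ and then $\delta\to0$. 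Only the values of the two infima are compared, never the kernels, which is exactly what keeps the argument short. To salvage your route you would either have to adopt this sandwich through $u^\ep$, or genuinely prove the pointwise identity $\ol m^\ast(t,x,y)=t\,\ol L\big(\tfrac{x-y}{t}\big)$, which is a nontrivial additional statement rather than a consequence of the Fekete limit.
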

\begin{proof}
Let $\delta>0$. Since $u^\ep$ uniformly converges to $u$ on $\ol\Omega_\ep$, we have
\[|u^\ep(x,t)-u(x,t)|\le \delta\quad \text{for\ all}\ x\in \ol \Omega_\ep\]
for $\ep$ small enough. 
By Lemma \ref{lem:mcon}, we have
\[\bigg|\ol m^*(t,x,y)-\ep m^*\Big(\frac{t}{\ep},\frac{x}{\ep},\frac{y}{\ep}\Big)\bigg|\le C\ep\]
for any $y\in\R^n$ with $|x-y|\le M_0t$. 
For any $\ep>0$ and $t\ge \ep$, there exists $y_{\ep,t,x}\in \ol{\Omega}_\ep$ with $|y_{\ep,t,x}-x|\le M_0t$ such that
\[u^\ep(x,t)=\ep m\Big(\frac{t}{\ep},\frac{x}{\ep},\frac{y_{\ep,t,x}}{\ep}\Big)+u_0(y_{\ep,t,x}).\] 
By Proposition \ref{prop:difference-mm},
\[\bigg|u^\ep(x,t)-\ep m^*\Big(\frac{t}{\ep},\frac{x}{\ep},\frac{y_{\ep,t,x}}{\ep}\Big)-u_0(y_{\ep,t,x})\bigg|\le C\ep.\] 
Thus,
\begin{align*}
\ol u(x,t)-u(x,t)&\le \ol m^*(t,x,y_{\ep,t,x})+u_0(y_{\ep,t,x})-u(x,t)
\\ &\le \ep m^*\Big(\frac{t}{\ep},\frac{x}{\ep},\frac{y_{\ep,t,x}}{\ep}\Big)+C\ep+u_0(y_{\ep,t,x})-u(x,t)
\\ &\le u^\ep(x,t)+2C\ep-u(x,t)
\\ &\le 2C\ep+\delta,\quad \text{for\ all}\ x\in \ol\Omega_\ep.
\end{align*}
Now, note that the left hand side is independent of $\ep$. For each $x\in \R^n$, we choose suitable $\ep_n\to 0$ such that $x\in \ol\Omega_{\ep_n}$, and then let $\delta\to 0$, we get $\ol u\le u$.

On the other hand, for $\delta>0$, there is $\ol y_{t,x,\delta}\in \R^n$ with $|\ol y_{t,x,\delta}-x|\le M_0t$ such that
\[\ol u(x,t)+\delta\ge \ol m^*(t,x,\ol y_{t,x,\delta})+u_0(\ol y_{t,x,\delta}).\]
We conclude that
\begin{align*}
u(x,t)&\le u^\ep(x,t)+\delta
\\ &\le \ep m^*\Big(\frac{t}{\ep},\frac{x}{\ep},\frac{\ol y_{t,x,\delta}}{\ep}\Big)+C\ep+u_0(\ol y_{t,x,\delta})+\delta
\\ &\le  \ol m^*(t,x,\ol y_{t,x,\delta})+u_0(\ol y_{t,x,\delta})+2C\ep+\delta
\\ &\le \ol u(x,t)+2C\ep+2\delta,\quad \text{for\ all}\ x\in \ol\Omega_\ep.
\end{align*}
We choose suitable $\ep_n\to 0$ such that $x\in \ol\Omega_{\ep_n}$, and then let $\delta\to 0$, we get $u\le \ol u$.
\end{proof}

\noindent {\it Proof of Theorem \ref{thm:main}. }
We first consider $0<t \le \ep$. By the comparison principle, Lemma \ref{lem:value-2}, we have
\[|u^\ep(x,t)-u_0(x)|\le Ct\quad \text{and}\quad |u(x,t)-u_0(x)|\le Ct.\]
We obtain that there is $C>0$ so that
\[u^\ep(x,t)-u(x,t)\le Ct\le C\ep.\]

We now consider $t>\ep$. It suffices to prove
\[|u^\ep(x,t)-\ol u(x,t)|\le C\ep.\]
On one hand, by Lemma \ref{lem:mcon} we have
\begin{align*}
\ol u(x,t)&=\inf\bigg\{\ol m^*(t,x,y)+u_0(y)\mid |x-y|\le M_0t,\ y\in\R^n\bigg\}
\\ &\le \inf\bigg\{\ep m\Big(\frac{t}{\ep},\frac{x}{\ep},\frac{y}{\ep}\Big)+u_0(y)+C\ep\mid |x-y|\le M_0t,\ y\in\ol \Omega_\ep\bigg\}
\\ &\le u^\ep(x,t)+C\ep.
\end{align*}

On the other hand, by Proposition \ref{prop:difference-mm}, we have
\begin{align*}
u^\ep(x,t)&=\inf\bigg\{\ep m\Big(\frac{t}{\ep},\frac{x}{\ep},\frac{y}{\ep}\Big)+u_0(y)\mid |x-y|\le (M_0+\sqrt{n}\ep)t,\ y\in\ol \Omega_\ep\bigg\}
\\ &\le \inf\bigg\{\ep m^*\Big(\frac{t}{\ep},\frac{x}{\ep},\frac{y}{\ep}\Big)+u_0(y)+C\ep\mid |x-y|\le (M_0+\sqrt{n}\ep)t,\ y\in\ol \Omega_\ep\bigg\}.
\end{align*}
We claim that
\begin{align*}
&\inf\bigg\{\ep m^*\Big(\frac{t}{\ep},\frac{x}{\ep},\frac{y}{\ep}\Big)+u_0(y)+C\ep\mid |x-y|\le (M_0+\sqrt{n}\ep)t,\ y\in\ol \Omega_\ep\bigg\}
\\ &\le \inf\bigg\{\ep m^*\Big(\frac{t}{\ep},\frac{x}{\ep},\frac{y}{\ep}\Big)+\|Du_0\|_{L^\infty(\R^n)}\sqrt{n}\ep+u_0(y)+C\ep\mid |x-y|\le M_0t,\ y\in\R^n\bigg\}.
\end{align*}
We only need to take care of the case where the second infimum above is achieved at $y\notin \ol \Omega_\ep$. There exist $\frac{\tilde y}{\ep},\frac{\tilde x}{\ep}\in \partial\Omega$ with $\frac{\tilde y}{\ep}-\frac{y}{\ep}\in Y$ and $\frac{\tilde x}{\ep}-\frac{x}{\ep}\in Y$ such that
\[m^*\Big(\frac{t}{\ep},\frac{x}{\ep},\frac{y}{\ep}\Big)=m\Big(\frac{t}{\ep},\frac{\tilde x}{\ep},\frac{\tilde y}{\ep}\Big).\]
Also, by the definition of $m^*$ we have
\[m\Big(\frac{t}{\ep},\frac{\tilde x}{\ep},\frac{\tilde y}{\ep}\Big)\ge m^*\Big(\frac{t}{\ep},\frac{x}{\ep},\frac{\tilde y}{\ep}\Big)\]
Using the above two inequalities, we get
\[\ep m^*\Big(\frac{t}{\ep},\frac{x}{\ep},\frac{y}{\ep}\Big)+u_0(y)+\|Du_0\|_{L^\infty(\R^n)}\sqrt{n}\ep\ge \ep m^*\Big(\frac{t}{\ep},\frac{x}{\ep},\frac{\tilde y}{\ep}\Big)+u_0(\tilde y),\]
which confirms our claim. We then conclude that there is $C>0$ such that
\begin{align*}
u^\ep(x,t)&\le \inf\bigg\{\ep m^*\Big(\frac{t}{\ep},\frac{x}{\ep},\frac{y}{\ep}\Big)+u_0(y)+C\ep\mid |x-y|\le (M_0+\sqrt{n}\ep)t,\ y\in\ol \Omega_\ep\bigg\}
\\ &\le \inf\bigg\{\ep m^*\Big(\frac{t}{\ep},\frac{x}{\ep},\frac{y}{\ep}\Big)+\|Du_0\|_{L^\infty(\R^n)}\sqrt{n}\ep+u_0(y)+C\ep\mid |x-y|\le M_0t,\ y\in\R^n\bigg\}
\\ &\le \inf\big\{\ol m^*(t,x,y)+C\ep+u_0(y)\mid |x-y|\le M_0t,\ y\in\R^n\bigg\}
\\ &\le \ol u(x,t)+C\ep.
\end{align*}
The proof is now complete.
\qed

\section*{Acknowledgements}

The work of HM was partially supported by the JSPS grants: KAKENHI
\#21H04431, \#22K03382, \#24K00531, \#25K07072.

\section*{Declarations}

\noindent {\bf Conflict of interest statement:} The authors state that there is no conflict of interest.

\medskip

\noindent {\bf Data availability statement:} Data sharing not applicable to this article as no datasets were generated or analysed during the current study.


\end{document}